\documentclass[10pt]{article}

\usepackage{definitions}
\usepackage{graphicx}
\usepackage{float}
\usepackage{subcaption}
\usepackage[numbers]{natbib}


\usepackage{amsmath, amsthm, amssymb, amsfonts, mathtools}
\usepackage{mathrsfs}
\usepackage{bm}
\numberwithin{equation}{section}
\usepackage{enumitem}
\usepackage{mathtools}

\usepackage{graphicx}
\usepackage{booktabs}
\usepackage{enumitem}
\setlist[itemize]{leftmargin=1.5em}
\setlist[enumerate]{leftmargin=1.5em}

\usepackage{xcolor}
\definecolor{Violet Purple}{RGB}{128, 29, 174}
\usepackage{hyperref}
\hypersetup{
  colorlinks=true,
  linkcolor=blue!60!black,
  citecolor=blue!60!black,
  urlcolor=blue!60!black
}

\usepackage{titlesec}
\titleformat{\section}{\normalfont\Large\bfseries}{\thesection}{0.8em}{}
\titleformat{\subsection}{\normalfont\large\bfseries}{\thesubsection}{0.8em}{}
\titleformat{\subsubsection}{\normalfont\normalsize\bfseries}{\thesubsubsection}{0.6em}{}
\titlespacing*{\section}{0pt}{1.2ex plus .2ex}{0.8ex}
\titlespacing*{\subsection}{0pt}{1.0ex plus .2ex}{0.6ex}
\titlespacing*{\subsubsection}{0pt}{0.8ex plus .2ex}{0.4ex}

\newtheorem{theorem}{Theorem}[section]
\newtheorem{lemma}[theorem]{Lemma}
\newtheorem{proposition}[theorem]{Proposition}

\newtheorem{remark}[theorem]{Remark}
\theoremstyle{definition}

\newcommand{\R}{\mathbb{R}}
\newcommand{\E}{\mathbb{E}}
\newcommand{\N}{\mathbb{N}}
\newcommand{\norm}[1]{\left\lVert #1 \right\rVert}
\newcommand{\cFMM}{\mathcal{F}_{m,M}}


\allowdisplaybreaks

\begin{document}

\title{Generalization of Silver Stepsize Schedule to Stochastic Optimization}

\author{Luwei Bai\footnote{School of Computing and Augmented Intelligence, Arizona State University (luweibai@asu.edu).} \and Yang Zeng\footnote{School of Computing and Augmented Intelligence, Arizona State University (yzeng87@asu.edu).} \and Baoyu Zhou\footnote{School of Computing and Augmented Intelligence, Arizona State University (baoyu.zhou@asu.edu).}}
\date{}
\maketitle

\begin{abstract}
  This work introduces a two-step stepsize schedule for stochastic gradient methods minimizing smooth strongly convex functions.  
We consider the setting where only stochastic gradient approximations, which are unbiased, of bounded variance, and supported on a finite set, are accessible. 
When the variance bound is relatively smaller than a ratio of the initial optimality gap, the proposed stepsize schedule achieves better convergence performance compared to the well-regarded constant stepsize $\alpha = \tfrac{2}{M+m}$,  where $m$ and $M$ denote the strong convexity and gradient-Lipschitz parameters, respectively. 
Our stepsize schedule can be viewed as a generalization of the well-known two-step silver stepsize schedule in~[J. M. Altschuler and P. A. Parrilo, {\sl Journal of the ACM}, 72(2):1-38, 2025] from deterministic setting to stochastic optimization.

\end{abstract}

\section{Introduction}

For optimization problems in the form of
\begin{align*}
    \min_{x \in \mathbb{R}^d} f(x),
\end{align*}
where $f : \R^d\to\R$ is a smooth strongly convex function, if exact gradient information $\nabla f(x)$ is available, gradient descent method serves as a fundamental approach~\cite{nocedal2006numerical}. 
Classical analyses often choose stepsizes that enforce per-iteration descent of the objective. However, recent developments show that this per-iteration descent is not necessary for convergence: larger, structured stepsize schedule can permit temporary increases yet still deliver provable and improved convergence rate. Early investigations based on two-step and three-step schemes revealed that employing moderately larger stepsizes, while temporarily violating monotonic decrease of objective, can nevertheless lead to faster overall convergence~\cite{altschuler2018greed}. 
Inspired by this observation, \cite{altschuler2025acceleration} introduced the so-called silver stepsize for strongly convex problems, a carefully structured sequence that further exploits this long-stepsize phenomenon and achieves a convergence rate of $\mathcal{O}(\kappa^{\log_{\rho}2}\log(1/\epsilon))$, which lies between that of the classical constant-stepsize method $\mathcal{O}(\kappa\log(1/\epsilon))$ and the accelerated rate $\mathcal{O}(\sqrt{\kappa}\log(1/\epsilon))$ attained by the Nesterov's accelerated gradient method~\cite{nesterov2013introductory}. 
Here $\kappa$ denotes the condition number, $\epsilon$ is the error tolerance of iterate distance to optimality, and $\rho = 1+\sqrt{2}$.
Unlike small-stepsize rules that guarantee descent at each iteration, the silver stepsize schedule ensures a net decrease in the objective only over multi-step horizons of length $N = 2^k$. 
This result was later extended to the smooth convex setting~\cite{Altschuler2025}, where the horizon length is $N = 2^k-1$.

Building on this foundation, subsequent works have extended and generalized these deterministic schemes to arbitrary numbers of iterations, nonsmooth problems, and minimax formulations. In particular,~\cite{grimmer2024composing,zhang2024accelerated} develop schedules applicable to arbitrary iteration lengths when the total number of iterations is known in advance, while~\cite{zhang2024anytime} introduces an anytime variant that removes the need for prior horizon knowledge. Similar acceleration phenomena have also been observed beyond smooth unconstrained settings, including the relaxed proximal point method~\cite{wang2024relaxed} and the proximal gradient method~\cite{bok2024accelerating}. Furthermore,~\cite{altschuler2024acceleration} proposed a randomized schedule for separable convex problems, and~\cite{shugart2025negative} analyzed minimax optimization where specially designed negative stepsizes yield convergence of the standard gradient ascent-descent dynamics. Taken together, these studies reveal a unifying principle: carefully designed stepsize rules, although they may not guarantee descent of the objective at every iteration, can yield substantially better convergence results than classical constant-stepsize schemes.

When only stochastic approximations of gradient are accessible, the role of stepsize becomes even more delicate. It not only governs the convergence of iterates but also controls the accumulation of noise within stochastic estimates. An effective schedule must therefore balance two competing goals: maintaining large stepsizes to ensure fast convergence, while mitigating the variance amplification induced by stochastic noise. Motivated by the success of the silver stepsize in the deterministic setting, this naturally leads to the following key question:

\begin{center}
\begin{quote}
{\sl In stochastic optimization, can one design stepsize schedules that generalize the silver stepsize---thus achieving faster convergence and simultaneously controlling variance accumulation?}
\end{quote} 
\end{center}

The design of stepsize schedules in deterministic optimization has been largely driven by the Performance Estimation Problem (PEP) framework, first established in~\cite{drori2014performance}. 
The PEP formulated the exact worst-case performance of a given method over a function class as an infinite-dimensional optimization problem and several tractable relaxations have been proposed to solve it.
Subsequently,~\cite{taylor2017smooth} introduced interpolation conditions that represent the PEP feasible set in finite dimensions, enabling exact semidefinite programming (SDP) formulations. 
By optimizing over a function class, the PEP framework yields tight performance bounds for first-order algorithms~\cite{drori2020efficient, taylor2017exact}. 
However, the dimension of the resulting SDP increases with the number of iterations, making it difficult to obtain analytical closed-form results. 
Consequently, many studies employ computer-assisted approaches to obtain numerical insight and to guide proofs, e.g.,~\cite{dragomir2022optimal,lieder2021convergence,Ryu2020Operator}.
Based on such computer-aided analyzes,~\cite{Grimmer2024Provably} provides theoretical evidence that using larger stepsizes, potentially violating per-iteration descent, can accelerate overall convergence. 

To design stepsize schedules in the stochastic setting, it is essential to establish an appropriate PEP formulation that captures the behavior of stochastic oracles. While most existing works focus on deterministic settings, several attempts have been made to extend PEP to stochastic optimization. One line of research examines the worst-case behavior of inexact methods by analyzing a single realization that corresponds to the worst possible sequence of oracle outcomes~\cite{de2020worst, devolder2014first,taylor2017exact}. In this way, the stochastic setting is effectively treated from a deterministic perspective. Moreover,~\cite{vernimmen2025worst} provided a tight analysis of relatively inexact gradient descent methods.
Another line of research investigates finite-support stochastic settings, in which the stochastic oracle at each point can take at most $n$ possible realizations. In such cases, stochastic methods can still be analyzed exactly through the PEP framework by enumerating all possible realizations of stochastic approximation~\cite{hu2017unified,taylor2019stochastic}. However, the resulting SDP dimension grows not only with the number of iterations but also with the number of possible stochastic realizations per iteration, leading to a total complexity of order $\mathcal{O}(N^n)$ for $N$ iterations and $n$ stochastic supports. This exponential growth imposes substantial computational challenges even for numerical solvers, making it exceedingly difficult to obtain closed-form theoretical results.

\subsection{Contributions}
Building upon the deterministic PEP framework, this paper introduces a tractable stochastic PEP formulation that characterizes the expected performance of stochastic gradient (SG) methods under common assumptions. This formulation further motivates a two-step long stepsize schedule that accelerates the convergence of SG algorithms for minimizing strongly convex and smooth functions. Our main contributions are summarized as follows. 

\textbf{Generalization of the two-step silver stepsize~\cite{altschuler2025acceleration}.} We propose a two-step long stepsize schedule for SG methods applied to $m$-strongly convex and $M$-smooth problems. Our schedule generalizes the well-known two-step silver stepsize originally introduced in the deterministic setting by~\cite{altschuler2025acceleration}. Analogous to the acceleration observed in the deterministic case, we show that our proposed schedule achieves superior expected convergence performance compared to the classical constant stepsize $\frac{2}{M+m}$, especially when the initial optimality gap dominates the variance of gradient estimates.

\textbf{Theoretical guarantees.} We establish a PEP framework for analyzing the two-step worst-case performance of SG methods on $m$-strongly convex and $M$-smooth functions. Under common assumptions, we rigorously prove that our two-step stepsize schedule consistently reduces the expected distance to the optimal solution compared with the constant stepsize $\frac{2}{M+m}$. While the empirical benefits of SG algorithms employing silver stepsize schedules were previously investigated in~\cite{vernimmen2025empirical}, this paper, to the best of our knowledge, presents the first theoretical justification for the accelerated convergence of a two-step long stepsize schedule relative to the constant stepsize $\frac{2}{M+m}$.

\textbf{Numerical validation.}
We conduct numerical experiments to support our theoretical findings and to compare our two-step stepsize schedule against the constant stepsize $\frac{2}{M+m}$. The results confirm the superior performance of our approach, particularly in regimes where the initial optimality gap substantially exceeds the variance of the stochastic gradient estimates---a setting that commonly arises in practical applications.

\subsection{Notations}
Let $\N:=\{1,2,3,\ldots\}$ denote the positive natural number set, $\R$ denote the real number set, and $\N_{>s}$ and $\R_{>s}$ (resp., $\N_{\geq s}$ and $\R_{\geq s}$) denote the set of all positive natural numbers and real numbers strictly greater than (resp., greater than or equal to) $s\in\R$, respectively. For $n\in\N$, let $[n]$ denote the set $\{1,2,\ldots,n\}$, $\R^n$ denote the set including all $n$-dimensional real vectors, and $\mathbf{1}_n$ denote the $n$-dimensional all-ones vector.
For $(d,n)\in\N\times\N$, let $\R^{d\times n}$ denote the set including all $d$-by-$n$-dimensional real matrices, $I_n$ denote the $n$-by-$n$-dimensional identity matrix, and $J_n$ denote the $n$-by-$n$-dimensional all-ones matrix, i.e., $J_n:=\mathbf{1}_n\mathbf{1}_n^\top$. We use $\mathbf{0}$ to denote all-zeros vectors or matrices with dimensions implied by the context. For $x\in\mathbb{R}^n$ and $i\in[n]$, let $[x]_i$ denote the $i$th-element of vector $x$.
For $\{x,y\}\subset\mathbb{R}^n$, let's define $\langle x,y\rangle:=x^\top y$ and $\|x\|:=\sqrt{x^\top x}$. For a symmetric matrix $X\in\R^{n\times n}$, i.e., $X = X^{\top}$, let $X\succeq \mathbf{0}$ (resp., $X \succ \mathbf{0}$) denote the matrix $X$ is positive semi-definite (resp., positive definite). For $k\in\N$ and any two matrices $A\in\R^{d_A\times n_A}$ and $B\in\R^{d_B\times n_B}$, let's define $A\oplus B:= \begin{bmatrix} A & \mathbf{0} \\ \mathbf{0} & B \end{bmatrix} \in \R^{(d_A + d_B)\times(n_A + n_B)}$ and $A^{\oplus k} := \underbrace{A\oplus A \oplus \cdots \oplus A}_{k\textup{ number of A's}}$. For $k\in\N$ and $\{A_i\}_{i=1}^k$, where each $A_i\in\R^{n_i\times n_i}$ is a squared matrix with $n_i\in\N$, let's denote the block diagonal matrix constructed by $\{A_i\}_{i=1}^k$ as $\text{diag}(A_1,\dots,A_k) := A_1\oplus A_2\oplus\cdots\oplus A_k$.
For any $s\in\R$, let $\lfloor s \rfloor$ denote the floor function, representing the largest integer number smaller than or equal to $s$, and let $\lceil s \rceil$ denote the ceiling function, representing the smallest integer number greater than or equal to $s$. For $(d,n)\in\N_{>0}\times\N$, we denote $\text{mod}(n,d)\in\{0\}\cup[d-1]$ as the remainder after $d$ divides $n$, i.e., $\text{mod}(n,d) = n - d\cdot \left\lfloor \frac{n}{d} \right\rfloor$.

\subsection{Organization}

The remainder of the paper is organized as follows.
In Section~\ref{sec.problem_setting}, we introduce the performance estimation framework tailored to the stochastic setting considered in this paper. 
Section~\ref{sec.theory} presents our proposed two-step stepsize schedule for the stochastic gradient method and analyzes its properties and convergence guarantees in comparison with the constant-stepsize scheme.  
Section~\ref{sec.numerical} presents numerical  experiments that validate our theoretical findings and provide further insights. Finally, Section~\ref{sec.conclusion} concludes this paper and outline potential directions for future research.

\section{Problem Setting}\label{sec.problem_setting}

In this paper, we focus on the optimization problem
\begin{equation}\label{eq:main_problem}
    \min_{x \in \R^d} f(x) := \E_{\xi}[F(x,\xi)],
\end{equation}
where $\xi$ is a random variable defined with respect to probability space $(\Xi,\mathscr{F},\PP_{\xi})$ and the objective function $f:\R^d\to\R$ belongs to function class $\cFMM$, denoting the class of proper, closed, and $m$-strongly convex functions with $M$-Lipschitz continuous gradients. Namely, with given constant parameters $\{m,M\}\subset\R_{>0}$, for any $\{x,y\} \subset \R^d$, function $f:\R^d\to\R$ satisfies that
\begin{equation}\label{eq:setting}
    f(y) \ge f(x) + \inner{\nabla f(x)}{y-x} + \frac{m}{2}\norm{y-x}^2 \quad \text{and} \quad
    \norm{\nabla f(x) - \nabla f(y)} \le M \norm{x-y}.
\end{equation}
A widely used method for solving \eqref{eq:main_problem} at scale is the stochastic gradient (SG) method~\cite{Bottou2018Optimization}.
Given an iterate $x\in\R^d$ and random variable $\xi$, SG algorithm first evaluates a stochastic gradient estimate $g(x,\xi)\approx\nabla f(x)$ and then updates $x\in\R^d$ along the $-g(x,\xi)$ direction, i.e., the next iterate is $x-\alpha g(x,\xi)$, where $\alpha\in\R_{>0}$ is step size. Throughout the whole paper, we make the following assumption regarding stochastic gradient estimates $g(x,\xi)$.

\begin{ass}\label{ass:oracle}
At any $x\in\R^d$, stochastic gradient estimate $g(x,\xi)$ always performs as an unbiased estimator of true gradient $\nabla f(x)$ with bounded variance and finite support, i.e., for all $x\in\R^d$, there exists $n\in\N$ such that $\{g(x,\xi^{(i)})\}_{i=1}^n$ includes all possible stochastic gradient estimate realizations at $x\in\R^d$. In addition, for any $x\in\R^d$, each stochastic gradient estimate $g(x,\xi^{(i)})$, where $i\in\{1,2,\ldots,n\}$, is realized with equal probability. As a summary, for all $x\in\R^d$, there exist $n\in\N$ and $\sigma\in\R_{>0}$ such that
\begin{align*}
\frac{1}{n}\sum_{i=1}^n g(x,\xi^{(i)}) = \mathbb{E}_{\xi}[g(x,\xi)] = \nabla f(x) \quad \text{and} \quad \frac{1}{n}\sum_{i=1}^n \|g(x,\xi^{(i)}) - \nabla f(x)\|^2 = \mathbb{E}_{\xi}[\|g(x,\xi) - \nabla f(x)\|^2] \le \sigma^2.
\end{align*}
\end{ass}

We make the following remark regarding Assumption~\ref{ass:oracle}.

\begin{remark}
Assumption~\ref{ass:oracle} is standard in stochastic optimization. Assuming stochastic gradient estimates to be unbiased estimator of the true gradient with bounded variance is a condition commonly adopted in the analysis of stochastic approximation methods~\cite{birge1997introduction, Bottou2018Optimization, ermoliev1988numerical}. 
Moreover, the finite-support assumption with equal sampling probability is naturally satisfied by many settings, including finite-sum problems. 

Furthermore, as recently shown in~\cite{rubbens2025computer}, the PEP formulated under such a finite-support stochastic oracle yields a valid bound for more general stochastic settings that are with infinitely many supports.
This observation implies that the theoretical results developed in this work---notably Theorems~\ref{theo:dual_feasibility} and~\ref{theo:lowerbound}---also remain valid and informative in much broader stochastic frameworks.
\end{remark}

\subsection{Performance Estimation Problem Model}

In this subsection, we introduce a PEP for a two-step SG algorithm under Assumption~\ref{ass:oracle}. The goal is to characterize the worst-case expected progress after two consecutive SG algorithm iterations subject to the structural properties of the problem class. With stepsize schedule $\{\alpha,\beta\}\subset\R_{>0}$, the two-step PEP can be formally written as
\begin{align}
    \sup_{\substack{\{x_0, x_1, x_2, x_*\}\subset\R^d,\\ \{\nabla f(x_0), \nabla f(x_1), \nabla f(x_*)\}\subset\R^d, \\
    \{g(x_0, \xi_0), g(x_1, \xi_1)\}\subset\R^d, \\
    f \in \cFMM}}
    &\quad \mathbb{E}_{\xi_0,\xi_1}[\|x_2 - x_\ast\|^2] \label{pep:one-step} \\
    \text{s.t.} \quad\quad\quad\quad\ \ & \nabla f(x_*) = \mathbf{0} \quad\text{and}\quad \|x_0 - x_\ast\|^2 \le R^2, \label{eq:initial_condition} \\
        & x_{1} = x_0 - \alpha g(x_0, \xi_0) \quad\text{and}\quad x_{2} = x_1 - \beta g(x_1, \xi_1), \label{eq:iterative_update} \\
        & \nabla f(x_k) = \E_{\xi_k}[g(x_k,\xi_k)] = \frac{1}{n}\sum\limits_{i=1}^n g(x_k, \xi_k^{(i)}) \ \  \forall \  k\in\{0, 1\}, \label{eq:expected_val_condition} \\
        & \sigma^2 \geq \E_{\xi_k}[\|g(x_k,\xi_k) - \nabla f(x_k)\|^2]  = \frac{1}{n}\sum\limits_{i=1}^n\|g(x_k, \xi_k^{(i)}) - \nabla f(x_k)\|^2 \ \ \forall \  k\in\{0, 1\}, \label{eq:variance_condition}
\end{align}
where~\eqref{eq:initial_condition} states the optimality condition and requires the distance between initial iterate $x_0$ and optimal solution $x_*$ is no larger than $R\in\R_{>0}$, \eqref{eq:iterative_update} describes the iterative update among $\{x_0,x_1,x_2\}\subset\R^d$, \eqref{eq:expected_val_condition}--\eqref{eq:variance_condition} requires stochastic gradient estimates to be unbiased estimators with bounded variance while satisfying Assumption~\ref{ass:oracle}. 

It's worth mentioning that the constraint $f \in \cFMM$ introduces an infinite-dimensional and generally intractable condition. As shown in~\cite{taylor2017smooth}, such an infinite-dimensional condition can be further replaced by an interpolation condition that only relies on a finite set of iterate information without changing solutions of optimization problem~\eqref{pep:one-step}--\eqref{eq:variance_condition}. In this way, we can reduce the condition $f\in\cFMM$ to a finite-dimensional form. We present such result in the following proposition. 

\begin{proposition}\label{prop:Qij}
(See~\cite[Theorem~4]{taylor2017smooth}.) Let's consider $0 < m < M < +\infty$ and a finite index set $\mathcal{K}$. Then $\{(x_k,\nabla f(x_k),f(x_k))\}_{k\in\mathcal{K}}$ is $\cFMM$-interpolable if and only if for any $i\in\mathcal{K}$ and $j\in\mathcal{K}$, it holds that
\begin{equation}\label{eq:Qij}
\begin{aligned}
    Q_{i, j} := 2(M-m)\big(f(x_i) - f(x_j)\big)
    + 2\langle M \nabla f(x_j) &- m \nabla f(x_i),\, x_j - x_i \rangle \\
    &- \|\nabla f(x_i) - \nabla f(x_j)\|^2
    - M m \|x_i - x_j\|^2 \ge 0.
\end{aligned}
\end{equation}
\end{proposition}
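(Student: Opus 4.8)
This restates the interpolation theorem of~\cite{taylor2017smooth}, so the plan is to establish both directions of the equivalence by a quadratic shift that reduces everything to the base case of convex, smooth functions. Write $g_k := \nabla f(x_k)$ and $f_k := f(x_k)$, and set $h := f - \tfrac{m}{2}\|\cdot\|^2$; it is standard that $f \in \cFMM$ if and only if $h$ is convex and $(M-m)$-smooth, with $\nabla h(x) = \nabla f(x) - m x$.

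\textbf{Necessity.} If $f \in \cFMM$, then applying the descent lemma to the convex $(M-m)$-smooth function $y \mapsto h(y) - \langle \nabla h(x_j),\, y\rangle$, whose minimizer is $x_j$, gives for all $i,j \in \mathcal{K}$ the cocoercivity bound $h(x_i) \ge h(x_j) + \langle \nabla h(x_j),\, x_i - x_j\rangle + \tfrac{1}{2(M-m)}\|\nabla h(x_i) - \nabla h(x_j)\|^2$. Substituting $h(x_k) = f_k - \tfrac{m}{2}\|x_k\|^2$ and $\nabla h(x_k) = g_k - m x_k$, multiplying by $2(M-m)$, and expanding the squares, the identity $\|x_i\|^2 - \|x_j\|^2 = \|x_i - x_j\|^2 + 2\langle x_j,\, x_i - x_j\rangle$ makes every $\|x_k\|^2$ term and every $\langle x_j,\, x_i - x_j\rangle$ term cancel, leaving precisely $Q_{i,j}\ge 0$.

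\textbf{Sufficiency.} Conversely, if $Q_{i,j}\ge 0$ for all $i,j\in\mathcal{K}$, then undoing the algebra shows the shifted triples $\{(x_k,\, g_k - m x_k,\, f_k - \tfrac{m}{2}\|x_k\|^2)\}_{k\in\mathcal{K}}$ satisfy the cocoercivity inequalities defining interpolability by convex $(M-m)$-smooth functions; any such interpolant $h$ yields $f := h + \tfrac{m}{2}\|\cdot\|^2 \in \cFMM$ interpolating the original data. To build $h$, pass to conjugates: with $\gamma_k := g_k - m x_k$ and $\theta_k := \langle \gamma_k, x_k\rangle - f_k + \tfrac{m}{2}\|x_k\|^2$, a short computation identifies the given inequalities with the strong-convexity interpolation conditions for $\{(\gamma_k, x_k, \theta_k)\}$, so the explicit function $\phi(\gamma) := \max_{k\in\mathcal{K}}\{\theta_k + \langle x_k, \gamma - \gamma_k\rangle + \tfrac{1}{2(M-m)}\|\gamma - \gamma_k\|^2\}$ is $\tfrac{1}{M-m}$-strongly convex with $x_k \in \partial\phi(\gamma_k)$ and $\phi(\gamma_k) = \theta_k$; taking $h := \phi^{*}$ produces a convex $(M-m)$-smooth function, and Fenchel duality forces $\nabla h(x_k) = \gamma_k$ and $h(x_k) = \langle x_k, \gamma_k\rangle - \theta_k = f_k - \tfrac{m}{2}\|x_k\|^2$, as needed.

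\textbf{Main obstacle.} The necessity direction and the back-and-forth algebra between $f$ and $h$ are routine bookkeeping. The real work is in the sufficiency construction: verifying that $\phi$ is genuinely $\tfrac{1}{M-m}$-strongly convex everywhere and that $\phi^{*}$ reproduces the prescribed gradients and values exactly, which is where the full pairwise content of the inequalities $Q_{i,j}\ge 0$ is consumed. In a self-contained treatment I would invoke the smooth–convex interpolation construction of~\cite{taylor2017smooth} for the base class and only carry out the quadratic-shift bookkeeping here.
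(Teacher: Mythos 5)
The paper does not prove this proposition itself; it simply cites \cite[Theorem~4]{taylor2017smooth}, so there is no in-paper proof to compare against. Your sketch correctly and faithfully reconstructs the argument of that cited reference---the quadratic shift $h = f - \tfrac{m}{2}\|\cdot\|^2$ to the convex $(M-m)$-smooth class, cocoercivity for necessity, and Legendre conjugation with the explicit max-of-quadratics interpolant for sufficiency---and the algebra you describe does indeed reduce the cocoercivity inequality exactly to $Q_{i,j}\ge 0$, so the proposal is sound and takes the same approach as the source the paper relies on.
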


Denoting $\Omega$ as the index set of realizations, i.e., 
\begin{equation}\label{eq:Omega_index_set}
\Omega = \{\,0,\, 1^{(1)},\,\dots,\,1^{(n)},\, * \,\},
\end{equation}
and applying the interpolation result in Proposition~\ref{prop:Qij} to~\eqref{pep:one-step}--\eqref{eq:variance_condition}, we may formulate the finite-dimensional problem
\begin{align}
    \sup_{\substack{\{x_i\}_{i\in\Omega}\subset\R^d, \\ \{\nabla f(x_i)\}_{i\in\Omega}\subset\R^d, \\
    \{g(x_j, \xi_j^{(i)})\}_{j\in\Omega\backslash\{*\}}\subset\R^d \ \forall i\in[n]}} &\quad \frac{1}{n^2}\sum_{i=1}^n\sum_{j=1}^n \left\|x_{1^{(i)}} - g(x_{1^{(i)}}, \xi_{1^{(i)}}^{(j)}) - x_\ast\right\|^2  \nonumber{} \\
    \text{s.t.} \quad\quad\quad\ & \nabla f(x_*) = \mathbf{0} \quad\text{and}\quad \|x_0 - x_\ast\|^2 \le R^2, \nonumber{} \\
        & x_{1^{(i)}} = x_0 - \alpha g(x_0, \xi_0^{(i)}) \quad \forall \ i\in[n], \label{pep:finite_dimension} \\
        & \frac{1}{n}\sum\limits_{i=1}^n g(x_j, \xi_j^{(i)}) = \nabla f(x_j) \quad \forall \ j\in \Omega\backslash\{*\}, \nonumber{} \\
        & \frac{1}{n}\sum\limits_{i=1}^n\|g(x_j, \xi_j^{(i)}) - \nabla f(x_j)\|^2 \leq \sigma^2 \quad \forall \  j\in \Omega\backslash\{*\}, \nonumber{} \\
        & Q_{i, j}\ge 0, \quad \forall\, (i,j) \in \Omega\times\Omega, \label{eq:Qij_constraint}
\end{align}
where $Q_{i,j}$ is defined in~\eqref{eq:Qij} and $\{\xi_k^{(i)}\}_{i=1}^n$ are realizations of random variable $\xi_k$ for $k\in\{0,1\}$ (see Assumption~\ref{ass:oracle}). We highlight that~\eqref{eq:Qij_constraint} comes from the interpolation condition for $m$-strongly convex and $M$-smooth functions (see Proposition~\ref{prop:Qij}), ensuring that there exists some function $\hat{f}\in\cFMM$ such that for any $i\in\Omega$, $(\hat{f}(x_i),\nabla \hat{f}(x_i)) = (f(x_i),\nabla f(x_i))$ in problem~\eqref{pep:finite_dimension}. 
Under Assumption~\ref{ass:oracle} and interpolation conditions, the finite-dimensional reformulation~\eqref{pep:finite_dimension} is equivalent to the original problem~\eqref{pep:one-step}: the expectation terms are replaced by empirical averaging over stochastic realizations, while unbiased and bounded variance requirements are enforced via linear and quadratic relations, respectively.

To solve the finite-dimensional formulation of the two-step PEP, following \cite{taylor2017smooth}, we reformulate it as an SDP by introducing a Gram matrix $H=P^TP$, where
\begin{equation}\label{eq:P_matrix}
    P = \begin{bmatrix}x_0 & g(x_0, \xi_0^{(1)}) & \cdots & g(x_0, \xi_0^{(n)}) & g(x_{1^{(1)}}, \xi_{1^{(1)}}^{(1)}) & \cdots & g(x_{1^{(n)}}, \xi_{1^{(n)}}^{(n)}) \end{bmatrix} \in\R^{d\times(1+n+n^2)}
\end{equation}
captures the information of initial iterate $x_0\in\R^d$ and all possible stochastic gradient estimates. 
Without the loss of generality, let's consider $x_* = \mathbf{0}$, whose optimality condition is $\nabla f(x_*) = \mathbf{0}$. Next, we introduce the following symbolic construction to parameterize the SDP. Let $e_i$ denote the $i$-th column of identity matrix $I_{(1+n+n^2)}$. Defining symbolic vectors
\begin{align}\label{eq.sdp.symobvec}
\bar{x}_0 = e_1, \quad \bar{g}_0^{(i)} = e_{1+i}, \quad\text{and}\quad \bar{g}_{1^{(i)}}^{(j)} = e_{1+n+(n-1)i+j}\ \ \text{for any }(i,j)\in[n]\times[n],
\end{align}
we have
\begin{align*}
x_0 = P \bar{x}_0, \quad g(x_0, \xi_0^{(i)}) = P \bar{g}_0^{(i)}, \quad\text{and}\quad g(x_{1^{(i)}}, \xi_{1^{(i)}}^{(j)}) = P \bar{g}_{1^{(i)}}^{(j)} \ \ \text{for any }(i,j)\in[n]\times[n],
\end{align*}
where $P\in\R^{d\times(1+n+n^2)}$ is defined in~\eqref{eq:P_matrix}. Meanwhile, from SG algorithm and Assumption~\ref{ass:oracle}, for any $(i,j)\in[n]\times[n]$, we further define
\begin{equation}\label{eq.sdp.symobvec_1}
\begin{aligned}
\bar{x}_{1^{(i)}} = \bar{x}_0 - \alpha \bar{g}_0^{(i)}, \quad  \bar{x}_2^{(i,j)} &= \bar{x}_{1^{(i)}} - \beta\bar{g}_{1^{(i)}}^{(j)}, \quad \bar{x}_* = \mathbf{0}, \\
\nabla \bar{f}_0 = \frac{1}{n} \sum_{t=1}^n \bar{g}_0^{(t)}, \quad \nabla \bar{f}_{1^{(i)}} &= \frac{1}{n} \sum_{t=1}^n \bar{g}_{1^{(i)}}^{(t)}, \quad \text{and} \quad \nabla \bar{f}_* = \mathbf{0}
\end{aligned}
\end{equation}
so that
\begin{align*}
x_{1^{(i)}} = P \bar{x}_{1^{(i)}}, \quad  x_2^{(i,j)} &= P \bar{x}_2^{(i,j)}, \quad x_* = P\bar{x}_*, \\
\nabla f(x_0) = P \nabla \bar{f}_0, \quad  \nabla f(x_{1^{(i)}}) = P \nabla \bar{f}_{1^{(i)}} \ \  &\text{for any }(i,j)\in[n]\times[n], \quad \text{and} \quad \nabla f(x_*) = P\nabla\bar{f}_*.
\end{align*}
Next, we define several symbolic matrices corresponding to the objective and constraints of the SDP formulation. First, matrices $\bar{A}_0^{\mathrm{var}}\in\R^{(1+n+n^2)\times(1+n+n^2)}$ and $\bar{A}_{1^{(i)}}^{\mathrm{var}}\in\R^{(1+n+n^2)\times(1+n+n^2)}$, where $i\in[n]$, encode the bounded-variance conditions in Assumption~\ref{ass:oracle}:
\begin{equation}\label{eq:Abar}
\begin{aligned}
&\bar{A}_0^{\text{var}} = \frac{1}{n} \sum_{i=1}^n \big(\bar{g}_0^{(i)} - \nabla \bar{f}_0\big)\big(\bar{g}_0^{(i)} - \nabla \bar{f}_0\big)^{\top} \\
\text{and} \quad &\bar{A}_i^{\text{var}} = \frac{1}{n} \sum_{j=1}^n \big(\bar{g}_{1^{(i)}}^{(j)} - \nabla \bar{f}_{1^{(i)}}\big)\big(\bar{g}_{1^{(i)}}^{(j)} - \nabla \bar{f}_{1^{(i)}}\big)^{\top} \quad \text{for any } i\in[n].
\end{aligned}
\end{equation}
For any $(i,j)\in\Omega\times\Omega$, we further use matrix $\bar{B}_{i,j}\in\R^{(1+n+n^2)\times(1+n+n^2)}$ to encode the $\cFMM$ interpolation conditions in Proposition~\ref{prop:Qij}, such as
\begin{equation}\label{eq:Bbar}
    \bar{B}_{i,j} = 
    \begin{bmatrix}\bar{x}_i & \bar{x}_j & \nabla \bar{f}_i & \nabla \bar{f}_j\end{bmatrix}
    \begin{bmatrix}
        -Mm & Mm & m & -M \\
        Mm & -Mm & -m & M \\
        m & -m & -1 & 1 \\
        -M & M & 1 & -1
    \end{bmatrix}
    \begin{bmatrix} \bar{x}_i & \bar{x}_j & \nabla \bar{f}_i & \nabla \bar{f}_j \end{bmatrix}^{\top}.
\end{equation}
Meanwhile, let's consider another matrix $\bar{C}\in\R^{(1+n+n^2)\times(1+n+n^2)}$ that encodes the distance to optimal solution $x_*=\mathbf{0}$ after two steps of the SG algorithm with stepsize schedule $(\alpha,\beta)$, where
\begin{equation}\label{eq:Cbar}
    \bar{C} = \frac{1}{n^2} \sum_{i=1}^n \sum_{j=1}^n \big( \bar{x}_0 - \alpha \bar{g}_0^{(i)} - \beta \bar{g}_{1^{(i)}}^{(j)} \big) \big( \bar{x}_0 - \alpha \bar{g}_0^{(i)} - \beta \bar{g}_{1^{(i)}}^{(j)} \big)^{\top}.
\end{equation}
Finally, we apply matrix
\begin{equation}\label{eq:AR}
    \bar{A}_R = \bar{x}_0 \bar{x}_0^{\top} \in\R^{(1+n+n^2)\times(1+n+n^2)}
\end{equation}
to capture the initial distance constraint (i.e., $\|x_0 - x_*\|^2\leq R^2$) in problem~\eqref{pep:finite_dimension}.

With the help of~\eqref{eq:Abar}--\eqref{eq:AR}, the finite-dimensional PEP problem~\eqref{pep:finite_dimension} can be formulated as the following SDP problem, whose objective and constraints are presented by the corresponding symbolic matrix: 
\begin{align}\label{pep:primal_prob}
    \sup_{\substack{ H \succeq \mathbf{0},\\ P \in \mathbb{R}^{d \times (1+n+n^2)} \\
    f_i \in \mathbb{R},\ \forall i \in \Omega}} \quad & \mathrm{Tr}(\bar{C} H) \\
    \text{s.t.} \ \quad\quad & 2(M - m)(f_i - f_j) + \mathrm{Tr}(\bar{B}_{i,j} H) \ge 0 \quad \forall (i,j) \in \Omega \times \Omega \text{ with }i\neq j, \nonumber\\
    & \mathrm{Tr}(\bar{A}_R H) \le R^2, \nonumber\\
    & \mathrm{Tr}(\bar{A}_i^{\text{var}} H) \le \sigma^2 \quad \forall\ i\in\{0, 1, ..., n\},    \label{eq.bounded_variance_constraint}\\
    & H = P^TP, \label{eq.sdpcons}
\end{align}
where $\Omega$ is the index set defined in~\eqref{eq:Omega_index_set}, $R\in\R_{>0}$ is an upper bound of the distance between initial iterate $x_0\in\R^d$ and $x_* = \mathbf{0}$, and $\sigma\in\R_{>0}$ serves as an upper bound of the standard deviation of stochastic gradient estimates; see Assumption~\ref{ass:oracle}. Following the procedure in~\cite{taylor2017smooth}, we may further derive the dual formulation of problem~\eqref{pep:primal_prob} without variable $P\in\mathbb{R}^{d \times (1+n+n^2)}$ and constraint~\eqref{eq.sdpcons}, which is
\begin{align}
    \inf_{\substack{\tau\in\R_{\geq 0},\\  \mu_i\in\R_{\geq 0} \text{ for all }i\in\{0\}\cup[n], \text{ and } \\ \lambda_{i,j}\in\R_{\geq 0} \text{ for all }(i,j)\in\Omega\times\Omega\text{ with }i\neq j}} \quad & \tau R^2 + \sum_{i=0}^n \mu_i \sigma^2 \label{prob.dual.sdp} \\
    \text{s.t.} \quad\quad\quad\quad\quad\ \  
        & \tau \bar{A}_R + \sum\limits_{i=0}^n\mu_i \bar{A}_i^{\text{var}} - \sum\limits_{(i, j)\in\Omega\times\Omega \text{ with }i\neq j}\lambda_{i, j}\bar{B}_{i, j} -\bar{C} \succeq \mathbf{0}, \label{eq.dual.sdpconstraint}\\
        & \sum_{i\in\Omega\backslash\{j\}} \lambda_{ij} = \sum_{i\in\Omega\backslash\{j\}} \lambda_{ji} \quad \forall j\in\Omega.\nonumber
\end{align}
We conclude this section with a final remark regarding the relation between~\eqref{pep:primal_prob} and~\eqref{prob.dual.sdp}.
\begin{remark}\label{rem:prime-dual-equal}
When $d > (1 + n + n^2)$, 
the primal formulation~\eqref{pep:primal_prob} is equivalent to the variant obtained by omitting the constraint~\eqref{eq.sdpcons}, as demonstrated in~\cite{taylor2017smooth}. In this case, the feasible regions characterized by $H \succeq \mathbf{0}$ and by $H = P^T P \succeq \mathbf{0}$ coincide. Hence, the formulation~\eqref{prob.dual.sdp} constitutes the exact dual of~\eqref{pep:primal_prob} and provides a valid upper bound on the primal objective.  

On the other hand, when $d \le (1 + n + n^2)$, which is the case that most commonly occurs, the relaxation obtained by removing~\eqref{eq.sdpcons} yields a problem whose objective value upper bounds that of~\eqref{pep:primal_prob}. 
Consequently, its dual problem~\eqref{prob.dual.sdp} continues to provide a valid upper bound for~\eqref{pep:primal_prob}.
\end{remark}

\section{Stepsize Schedule}\label{sec.theory}
In this section, we aim to introduce our proposed two-step stepsize schedule, and present its theoretical properties. In the rest of this paper, without the loss of generality, we always consider $m=1$ and $M > 1$. In addition, we introduce another parameter $v\in\left[0, \frac{(M-1)n}{(n-1)M}\right]$\footnote{By convention, $\frac{(M-1)n}{(n-1)M} = +\infty$ when $n=1$.} that plays a crucial role in our stepsize schedule selection.  
In the deterministic setting, the classical two-step silver stepsize schedule~\cite{altschuler2025acceleration} is obtained directly from the dual formulation~\eqref{prob.dual.sdp}. 
In this formulation, the positive semi-definite (PSD) constraint~\eqref{eq.dual.sdpconstraint} involves a symmetric matrix whose entries depend on the stepsizes and the dual variables. 
By setting every entry of this matrix to zero, one obtains an all-zeros matrix that is trivially PSD for any two-step stepsize schedule selection. 
The resulting algebraic system, obtained by equating all entries to zero, simultaneously determines the dual variables as well as the two-step silver stepsize schedule, which has been proven optimal in the deterministic case.

However, in the stochastic setting, the all-zeros matrix used in the deterministic construction fails to yield a meaningful stepsize schedule, degenerating to the trivial constant stepsize $\tfrac{2}{M+m}$.
This limitation arises because the size of the associated SDP increases with the number of stochastic supports $n$: while the deterministic two-step case involves a $3\times3$ matrix, the stochastic formulation requires a matrix of dimension $(1+n+n^2)\times(1+n+n^2)$. 
From the perspective of generalizing the silver stepsize to the stochastic setting, we introduce a structured parametric matrix pattern, which depends on the selection of $v$ and is not necessarily $\mathbf{0}$, in place of the all-zeros matrix. 
Given Lipschitz gradient constant $M > 1$ and the number of stochastic supports $n$, each entry of the our parametric matrix is fully determined once the parameter $v$ is fixed, thereby defining a structured, non-trivial matrix that reflects stochastic variability. 
In the same manner as in the deterministic derivation, equating our parametric matrix with the left-hand-side matrix in the PSD constraint~\eqref{eq.dual.sdpconstraint} of the dual formulation yields an algebraic system whose solution determines the dual variables and the two-step stepsize schedule for the stochastic setting.

Our parametric matrix naturally recovers the all-zeros matrix achived by the classical two-step silver stepsize schedule in the deterministic regime. 
When $n = 1$, our matrix reduces to an all-zeros $3\times3$ matrix, reproducing exactly the deterministic construction. 
In the stochastic case with $n > 1$ and $\sigma > 0$, setting $v = 0$ yields an equation system identical to that of the deterministic formulation, except for additional dual variables associated with the bounded-variance constraint~\eqref{eq.bounded_variance_constraint}, which further results in the classical two-step silver stepsize. 
This setting ($n>1$, $\sigma>0$, and $v=0$) fundamentally mimics scenarios when the initial optimality gap dominates the variance bound of stochastic gradient estimates, e.g., $R \gg \sigma$, where the difference among stochastic gradient estimates is negligible.
In fact, the choice of $v$ is delicate, as it directly reflects the problem's relative stochasticity, quantified by $\frac{\sigma}{R}$ in~\eqref{prob.dual.sdp}. A more detailed discussion is deferred to Remarks~\ref{rem:dual_feasibility} and~\ref{rem:lowerbound}.

The rest of this section focuses on analyzing theoretical properties of our proposed two-step stepsize schedule for minimizing $1$-strongly convex and $M$-smooth functions under Assumption~\ref{ass:oracle}. 
We first establish in~\eqref{eq.stepsize.range}--\eqref{eq.stepsize.beta} the precise conditions that define our proposed two-step stepsize schedule, whose uniqueness is shown in Theorem~\ref{lem:root-interval}.  
Subsequently, by identifying a set of dual multipliers that are feasible to~\eqref{prob.dual.sdp}, Theorem~\ref{theo:dual_feasibility} characterizes the expected error bound of SG methods with our proposed schedule (see~\eqref{eq.upperbound}). 
Eventually, it is shown in Theorem~\ref{theo:lowerbound} that our proposed schedule achieves strictly better performance compared with the popular classical constant stepsize $\frac{2}{M+m}$.

In particular, given arbitrary parameters $(M,n,v)\in\R_{>1}\times\N\times\left[0, \tfrac{(M-1)n}{(n-1)M}\right]$, we will show in Theorem~\ref{lem:root-interval} that there exists a unique two-step stepsize schedule $(\alpha,\beta)\in\R^2$ satisfying   
\begin{align}\label{eq.stepsize.range}
&(\alpha,\beta) \in \left[\tfrac{1}{M}, 1\right) \times \left(\tfrac{\alpha}{M}, \tfrac{M+1}{2M}\right), \\
0 = \ &(M - 1)Mn\alpha^3 - (M+1)(M-2)n\alpha^2 - (4n + (M-1)(n-1)v)\alpha + 2n, \label{eq.stepsize.alpha} \\
\text{and}\ \ 0 = \ &M n \left((-1 + M)^2 n + (1 + M)^2 (-1 + n) v \right) 
\left( (-1+M)\alpha + (1+M) \right)\beta^2 \label{eq.stepsize.beta} \\
&+ 2M \Bigl[
       2(-1+M)Mn(-n + (n-1)v) \alpha^2 \nonumber{} \\ 
&\quad\quad\quad\  
    + (1+M)(-n + (n-1)v)\,((3 - M) n + (-1 + M) (-1 + n) v) \alpha \nonumber{} \\
&\quad\quad\quad\  
    + n\left( n - (M - 4) M n - (1 + M) (3 + M) (-1 + n) v \right) 
   \Bigr]\beta \nonumber{} \\
&+ \Bigl[
     -2 M (-1 + M^2) n (-n + (n - 1) v)\alpha^2 \nonumber{} \\ 
&\quad\ \ 
   + \left( 
        2 (2 + M + 2 M^2 - M^3) n^2 + (-3 + M) (1 + M)^2 (-1 + n) n v + (1 - 
    M) (1 + M)^2 (-1 + n)^2 v^2  
     \right)\alpha \nonumber{} \\
&\quad\ \ 
   + 2(1+M)^2 n(-n + (n - 1)v)
   \Bigr], \nonumber{}
\end{align}
which is denoted by $(\alpha^*,\beta^*)$.
\begin{theorem}\label{lem:root-interval}
For any parameters $(M,n,v)\in\R_{>1}\times\N\times\left[0, \tfrac{(M-1)n}{(n-1)M}\right]$, there exists a unique $\alpha^*\in\left[\frac{1}{M},1\right)$ such that $\alpha=\alpha^*$ is a solution of~\eqref{eq.stepsize.alpha}. Moreover, with the aforementioned $\alpha^*$, there also exists a unique $\beta^*\in\left(\tfrac{\alpha^*}{M}, \tfrac{M + 1}{2M}\right)$ making $(\alpha,\beta) = (\alpha^*,\beta^*)$ satisfy~\eqref{eq.stepsize.beta}.
\end{theorem}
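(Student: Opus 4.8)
The statement is an existence-and-uniqueness claim about roots of two polynomial equations, \eqref{eq.stepsize.alpha} in $\alpha$ and \eqref{eq.stepsize.beta} in $\beta$, within prescribed intervals. The $\alpha$-equation is cubic in $\alpha$; the $\beta$-equation is quadratic in $\beta$ (with $\alpha = \alpha^*$ substituted). So the natural tool is the intermediate value theorem combined with sign/monotonicity analysis of the relevant polynomials on the given intervals.

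My plan for the $\alpha$-part: define $p(\alpha) := (M-1)Mn\alpha^3 - (M+1)(M-2)n\alpha^2 - (4n + (M-1)(n-1)v)\alpha + 2n$ and evaluate it at the endpoints $\alpha = \tfrac{1}{M}$ and $\alpha = 1$. At $\alpha = \tfrac1M$ I would collect terms and show $p(\tfrac1M) > 0$; at $\alpha = 1$ I would show $p(1) < 0$. A clean check: $p(1) = (M-1)Mn - (M+1)(M-2)n - 4n - (M-1)(n-1)v + 2n$; the $n$-terms collapse to $\big[(M^2-M) - (M^2-M-2)\big]n - 4n + 2n = 2n - 4n + 2n = 0$, leaving $p(1) = -(M-1)(n-1)v \le 0$, which is $<0$ unless $v=0$ or $n=1$ (the degenerate boundary cases require a small separate argument, e.g. examining $p'(1)$ or noting that at $v=0$ the relevant root is known from the deterministic silver-stepsize construction). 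Existence of a root in $[\tfrac1M,1)$ then follows from IVT; for uniqueness I would show $p$ is strictly monotone (or has at most one sign change) on this interval—most likely by arguing $p'(\alpha) < 0$ there, or by a Descartes/discriminant-type argument ruling out a second root in the interval, using the constraint $v \le \tfrac{(M-1)n}{(n-1)M}$ to control the linear coefficient. This monotonicity check is where the bookkeeping gets heaviest, since $p'$ is a quadratic whose sign on $[\tfrac1M,1)$ must be pinned down across the full parameter range $(M,n,v)$.

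For the $\beta$-part: with $\alpha = \alpha^*$ fixed, write \eqref{eq.stepsize.beta} as $q(\beta) = a\beta^2 + b\beta + c$ with coefficients $a,b,c$ depending on $(M,n,v,\alpha^*)$. I would first check the sign of the leading coefficient $a = Mn\big((M-1)^2 n + (M+1)^2(n-1)v\big)\big((M-1)\alpha^* + (M+1)\big)$, which is manifestly positive for all admissible parameters (every factor is positive since $\alpha^* \ge \tfrac1M > 0$ and $v \ge 0$). Then evaluate $q$ at the two endpoints $\beta = \tfrac{\alpha^*}{M}$ and $\beta = \tfrac{M+1}{2M}$ and show they have opposite signs—I expect $q(\tfrac{\alpha^*}{M})$ and $q(\tfrac{M+1}{2M})$ to be one positive and one negative, using the defining cubic relation \eqref{eq.stepsize.alpha} for $\alpha^*$ to simplify the resulting expressions (this substitution is essential; without it the endpoint values will not obviously have a definite sign). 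A sign change plus $a > 0$ gives exactly one root in the open interval by IVT and the fact that a quadratic has at most two real roots—so I would additionally check that the \emph{other} root of $q$ lies outside $(\tfrac{\alpha^*}{M}, \tfrac{M+1}{2M})$, either via the product/sum of roots or by checking that $q$ does not change sign a second time inside the interval.

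The main obstacle, I expect, is the endpoint sign computations in both parts—particularly showing $q(\tfrac{\alpha^*}{M})$ and $q(\tfrac{M+1}{2M})$ have the required signs, because $\alpha^*$ is only implicitly defined and one must repeatedly reduce high-degree expressions in $\alpha^*$ modulo the cubic \eqref{eq.stepsize.alpha}, keeping track of the parameter constraint $0 \le v \le \tfrac{(M-1)n}{(n-1)M}$ to guarantee the signs are uniform over the whole admissible region. The $\alpha$-monotonicity step is the secondary obstacle. A careful case split (e.g. $v = 0$ versus $v > 0$, and $n = 1$ versus $n \ge 2$) will likely be needed to handle the degenerate boundary cases where $p(1) = 0$, since there the root sits exactly at the right endpoint of the closed-then-half-open interval and the IVT argument must be replaced by direct inspection (matching the known deterministic silver stepsize when $v = 0$).
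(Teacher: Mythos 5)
Your overall plan — IVT at the interval endpoints combined with sign and shape control of the relevant polynomials, using the cubic relation to reduce expressions in $\alpha^*$, with case splits for $n=1$ versus $n>1$ and $v=0$ versus $v>0$ — matches the paper's strategy. The $\beta$-part of your plan is essentially identical to the paper's: check the leading coefficient is positive, show $q(\tfrac{\alpha^*}{M})<0$ and $q(\tfrac{M+1}{2M})>0$ (the former by rewriting $q(\tfrac{\alpha^*}{M})$ modulo the cubic \eqref{eq.stepsize.alpha} as a quadratic $\psi(\alpha^*)$ and bounding its coefficients over the admissible $v$-range), then apply IVT. Your extra step of ``checking the other root lies outside the interval'' is unnecessary once you note $q$ opens upward and the endpoint signs are $(-,+)$: then $\tfrac{\alpha^*}{M}$ sits between the two roots and $\tfrac{M+1}{2M}$ sits outside, so exactly one root lies strictly between them.

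However, your primary suggestion for uniqueness of $\alpha^*$ — ``most likely by arguing $p'(\alpha)<0$ there'' — is wrong, and this is a genuine gap. The polynomial $h=p$ is \emph{not} monotone on $\left[\tfrac1M,1\right]$. Direct computation gives
\begin{align*}
h'(1) &= (M-1)\bigl(Mn-(n-1)v\bigr) > 0
\qquad\text{and}\qquad
h'\!\left(\tfrac1M\right) = -(M-1)\left(\tfrac{n(2M+1)}{M}+(n-1)v\right) < 0,
\end{align*}
so $h'$ changes sign inside the interval. What the paper actually uses is that $h'$ is a strongly convex quadratic, hence there is a unique $r\in(\tfrac1M,1)$ with $h'(r)=0$, and $h$ is strictly decreasing on $[\tfrac1M,r]$ and strictly increasing on $[r,1]$. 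Combined with $h(\tfrac1M)\ge 0$ and $h(1)\le 0$, this V-shape forces exactly one zero in $[\tfrac1M,1)$: when $v>0$ one has $h(1)<0$, so $h$ is negative on all of $[r,1]$ and the unique crossing occurs on the decreasing branch; when $v=0$ one has $h(1)=0$ but $1$ is excluded from the half-open interval, and the same argument gives the unique interior crossing. Your hedge (``or has at most one sign change... or a discriminant-type argument'') gestures in the right direction, but you would have needed to drop the monotonicity claim and replace it with this decreasing-then-increasing analysis to close the argument.
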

\begin{proof}
See Appendix~\ref{sec.app_proof_1}.
\end{proof}

Notice that whenever $(n-1)v = 0$, a direct computation of our stepsize policy in~\eqref{eq.stepsize.range}--\eqref{eq.stepsize.beta} implies
\begin{align}\label{eq.alignsilver}
\alpha^* = \frac{2}{1 + \sqrt{\,1 - 2M + 2M^2\,}} 
\qquad \text{and} \qquad
\beta^* = \frac{2}{1 + 2M - \sqrt{\,2M^2 - 2M + 1\,}},
\end{align}
which coincide with the classical two-step silver stepsize schedule in the deterministic setting~\cite{altschuler2025acceleration}.\footnote{The computation can also be verified symbolically in \textsc{Mathematica}; see \url{https://github.com/LuweiYY/LongStepSize_SGD}.}
When $n \in \mathbb{N}_{>1}$, the accuracy of stochastic approximations depends on the variance bound $\sigma$, and different choices of the parameter $v$ result in different $(\alpha^*, \beta^*)$. 
Figure~\ref{fig:step-vs-v} illustrates how the values of $(\alpha^*,\beta^*)$ vary with $v$: as $v$ increases, both $\alpha^*$ and $\beta^*$ get smaller. 
This relation indicates that adjusting $v$ directly influences our proposed two-step stepsize schedule. Notably, by selecting a larger $v$, one can obtain smaller stepsizes $(\alpha^*, \beta^*)$, which intuitively matches the expectation of better performance under higher-noise conditions.

\begin{figure}[htbp]
    \centering
    \begin{subfigure}[b]{0.48\textwidth}
        \centering
        \includegraphics[width=\textwidth]{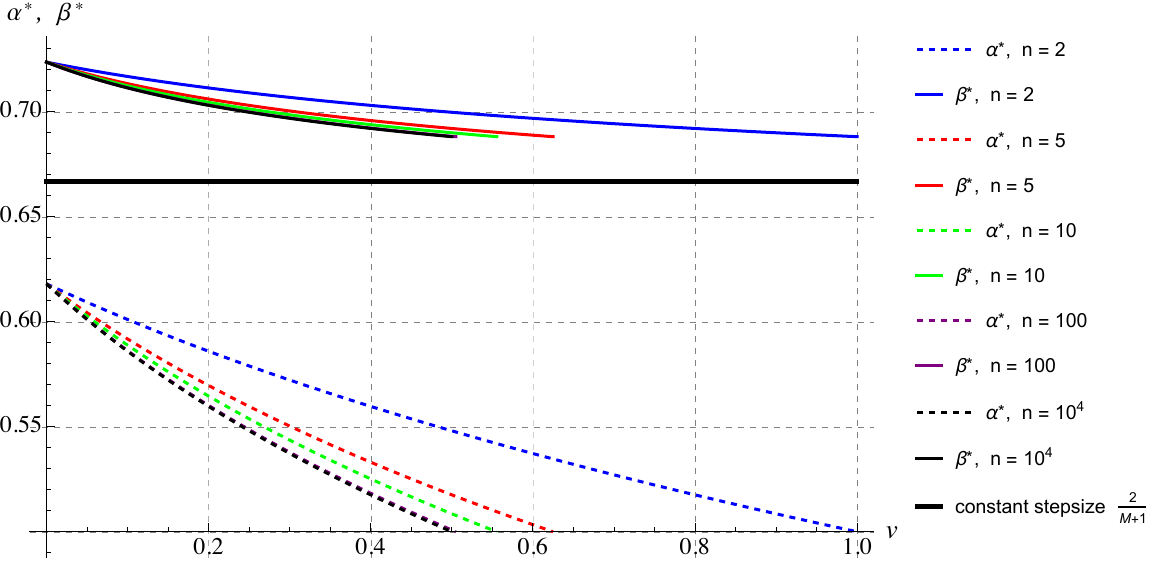}
        \caption{$M=2$}
    \end{subfigure}
    \hfill
    \begin{subfigure}[b]{0.48\textwidth}
        \centering
        \includegraphics[width=\textwidth]{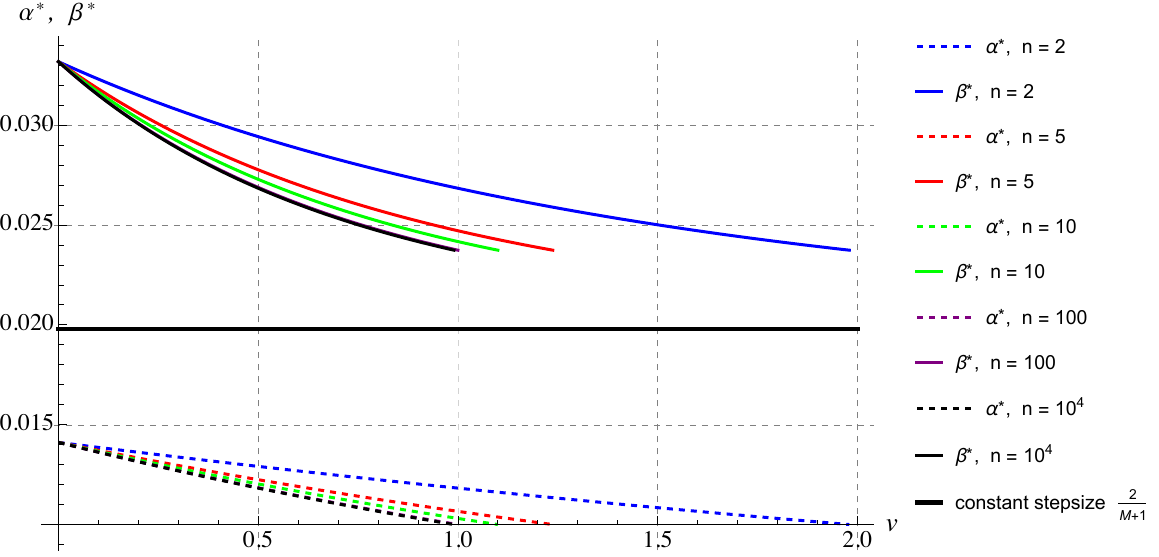}
        \caption{$M=100$}
    \end{subfigure}
    \caption{Two-step stepsize schedule $(\alpha^*, \beta^*)$ versus the parameter $v$ 
    for representative values of $M$ and $n$.}
    \label{fig:step-vs-v}
\end{figure}

Given $(\alpha^*, \beta^*)$, one can find corresponding dual variables that are feasible to the problem~\eqref{prob.dual.sdp}. 
The objective value of~\eqref{prob.dual.sdp} achieved by these feasible dual variables provides an upper bound for the primal objective~\eqref{pep:one-step}. Consequently, by Theorem~\ref{lem:root-interval}, each selection of $v\in\left[0, \tfrac{(M-1)n}{(n-1)M}\right]$ determines a unique two-step stepsize schedule $(\alpha^*, \beta^*)$ and, in turn, a distinct upper bound on the expected error in~\eqref{pep:one-step}. 
This relation is formally established in the following theorem.

\begin{theorem} \label{theo:dual_feasibility}
Suppose $(M,n,v)\in\R_{>1}\times\N\times\left[0, \tfrac{(M-1)n}{(n-1)M}\right]$ are pre-determined parameters and $(\alpha^*,\beta^*)$ is the stepsize schedule described in Theorem~\ref{lem:root-interval}. Based on the index set $\Omega$ defined in~\eqref{eq:Omega_index_set}, we consider variables of the dual formulation~\eqref{prob.dual.sdp} taking values as 
\begin{align}
\lambda_{i,*} &= \frac{\alpha^* + \beta^* - \alpha^*\beta^*}{n(1+M-\alpha^*+M\alpha^*)} \quad\forall i\in\Omega\backslash\{0,*\}, \label{eq.lambda_i*} \\
\lambda_{*,i} &= \frac{M\beta^* + \alpha^*(-1 + M\beta^*)}{Mn(1+M-\alpha^*+M\alpha^*)} \quad\forall i\in\Omega\backslash\{0,*\}, \label{eq.lambda_*i} \\
\lambda_{0,*} &= 0, \label{eq.lambda_0*} \\
\lambda_{*,0} &= n(\lambda_{1^{(1)},*} - \lambda_{*,1^{(1)}}), \label{eq.lambda_*0} \\
\lambda_{0,i} &= \frac{n^2(\lambda_{1^{(1)},*}+M\lambda_{*,1^{(1)}})^2 - n(\lambda_{1^{(1)},*}+\lambda_{*,1^{(1)}})}{2(n - nv + v)} + \frac{\lambda_{1^{(1)},*} - \lambda_{*,1^{(1)}}}{2} \quad \forall i\in\Omega\backslash\{0,*\}, \label{eq.lambda_0i} \\
\lambda_{i,0} &= \frac{n^2(\lambda_{1^{(1)},*}+M\lambda_{*,1^{(1)}})^2 - n(\lambda_{1^{(1)},*}+\lambda_{*,1^{(1)}})}{2(n - nv + v)} - \frac{\lambda_{1^{(1)},*} - \lambda_{*,1^{(1)}}}{2} \quad \forall i\in\Omega\backslash\{0,*\}, \label{eq.lambda_i0} \\
\lambda_{i,j} &= 0 \quad \forall (i,j)\in(\Omega\backslash\{0,*\}) \times (\Omega\backslash\{0,*\}) \text{ with }i\neq j, \label{eq.lambda_ij} \\
\tau &= 1 - M\left(\lambda_{0,*} + \lambda_{*,0} + \sum_{i=1}^n\lambda_{1^{(i)},*} + \sum_{i=1}^n\lambda_{*,1^{(i)}}\right) \label{eq.tau} \\
\mu_0 &= \begin{cases} 0 & \text{if }n=1 \\
\max\Big\{0, \frac{n^2 - (n-1)^2v^2}{(n-1)v}(\lambda_{0,1^{(1)}} + \lambda_{1^{(1)},0}) + n(\lambda_{0,1^{(1)}} + \lambda_{1^{(1)},0}) \\
\quad\quad\quad\quad\quad\quad\quad\quad\quad\quad\quad - 2\alpha^* n(\lambda_{0,1^{(1)}} + M\lambda_{1^{(1)},0}) + (\lambda_{0,*} + \lambda_{*,0})\Big\} &\text{if }n\in\N_{>1}, \end{cases} \label{eq.mu_0} \\
\text{and} \quad \mu_i &= \begin{cases} 0 & \text{if }n=1 \\ \max\left\{0, \left(1 - \frac{(n-1)v}{n}\right)(\lambda_{0,1^{(1)}} + \lambda_{1^{(1)},0}) + (\lambda_{*,1^{(1)}} + \lambda_{1^{(1)},*})\right\} &\text{if }n\in\N_{>1}
\end{cases} \quad \forall i\in[n]. \label{eq.mu_i}
\end{align}
Then the following statements hold true.
\begin{enumerate}[label=(\roman*)]
    \item All variables of the dual formulation~\eqref{prob.dual.sdp} are non-negative.
    \item The positive semi-definiteness constraint and the equality constraint in~\eqref{prob.dual.sdp} are both satisfied.
    \item With the help of $\tau$ and $\{\mu_i\}_{i=0}^n$, there exists a valid upper bound of the optimal objective value of problem~\eqref{pep:one-step}, i.e.,
    \begin{align}\label{eq.upperbound}
        \mathbb{E}_{\xi_0,\xi_1}[\|x_2 - x_\ast\|^2] \leq \tau R^2 + \sum_{i=0}^n\mu_i\sigma^2.        
    \end{align}
\end{enumerate}
\end{theorem}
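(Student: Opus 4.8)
The plan is to establish that the stated values form a \emph{feasible} point of the dual SDP~\eqref{prob.dual.sdp} --- which is exactly the content of items~(i) and~(ii) --- and then to read off the bound of item~(iii) by weak duality. Item~(i) and the equality constraint in~(ii) are essentially bookkeeping once the stepsize interval of Theorem~\ref{lem:root-interval} is in hand. For non-negativity: from $M>1$ and $\alpha^*\in[\tfrac1M,1)$, $\beta^*\in(\tfrac{\alpha^*}{M},\tfrac{M+1}{2M})$ one reads off $1+M-\alpha^*+M\alpha^*=1+M+(M-1)\alpha^*>0$, $\alpha^*+\beta^*-\alpha^*\beta^*=\alpha^*+(1-\alpha^*)\beta^*>0$, and $M\beta^*+\alpha^*(M\beta^*-1)>0$ (the last using $\beta^*>\alpha^*/M$), so $\lambda_{i,*},\lambda_{*,i},\lambda_{0,*}\geq0$ from~\eqref{eq.lambda_i*}--\eqref{eq.lambda_0*} and $\lambda_{i,j}\geq0$ trivially from~\eqref{eq.lambda_ij}. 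The remaining multipliers collapse to one-line inequalities in the same data: $\lambda_{*,0}=n(\lambda_{1^{(1)},*}-\lambda_{*,1^{(1)}})\geq0$ since $M\bigl(\alpha^*+(1-\alpha^*)\beta^*\bigr)-\bigl(M\beta^*+\alpha^*(M\beta^*-1)\bigr)=\alpha^*(M+1-2M\beta^*)\geq0$, and $\tau\geq0$ from~\eqref{eq.tau} since $\bigl(1+M+(M-1)\alpha^*\bigr)-2M\bigl(\alpha^*+(1-\alpha^*)\beta^*\bigr)=(1-\alpha^*)(M+1-2M\beta^*)\geq0$; the quantities $\lambda_{0,i},\lambda_{i,0}$ of~\eqref{eq.lambda_0i}--\eqref{eq.lambda_i0} follow by showing their common symmetric term dominates $\tfrac12\lvert\lambda_{1^{(1)},*}-\lambda_{*,1^{(1)}}\rvert$ (again via Theorem~\ref{lem:root-interval}, and where convenient the defining identities~\eqref{eq.stepsize.alpha}--\eqref{eq.stepsize.beta}), using that $n-nv+v=n-(n-1)v\in(0,n]$ on the admissible range of $v$ so no denominator vanishes; finally $\mu_0,\mu_i$ are non-negative by their $\max\{0,\cdot\}$ form and vanish when $n=1$. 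The equality constraint $\sum_{i\in\Omega\backslash\{j\}}\lambda_{ij}=\sum_{i\in\Omega\backslash\{j\}}\lambda_{ji}$ is verified by direct summation over the node types $j\in\{0\}$, $j=1^{(k)}$, $j=*$: branch-indexed multipliers are branch-independent (so summing over the $n$ branches multiplies by $n$), branch-to-branch multipliers vanish, and $\lambda_{*,0}$ was defined in~\eqref{eq.lambda_*0} precisely so that the nodes $0$ and $*$ balance, each case reducing to the single identity $\lambda_{0,i}-\lambda_{i,0}=\lambda_{1^{(1)},*}-\lambda_{*,1^{(1)}}$ of~\eqref{eq.lambda_0i}--\eqref{eq.lambda_i0}.

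\emph{Positive semidefiniteness --- the main obstacle.} Write
\[
S:=\tau\bar{A}_R+\sum_{i=0}^n\mu_i\bar{A}_i^{\mathrm{var}}-\sum_{(i,j)\in\Omega\times\Omega,\,i\neq j}\lambda_{i,j}\bar{B}_{i,j}-\bar{C}
\]
for the left-hand side of~\eqref{eq.dual.sdpconstraint} evaluated at the dual point above. The plan has two parts. First, show that $S$ equals the structured parametric matrix from which $(\alpha^*,\beta^*)$ was derived: substituting~\eqref{eq.lambda_i*}--\eqref{eq.mu_i} and collecting the coefficient of each symbolic outer product $e_ie_j^\top$, every entry of $S$ reduces, after invoking the polynomial identities~\eqref{eq.stepsize.alpha} and~\eqref{eq.stepsize.beta}, to the prescribed pattern. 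Second, show this parametric matrix is PSD; here I would exploit its symmetry. Namely $S$ is invariant under any permutation of the $n$ ``branches'' (each $\bar{g}_0^{(i)}$ together with its block $\{\bar{g}_{1^{(i)}}^{(j)}\}_j$) and under permutations of the inner index $j$ inside a block, because every multiplier respects these symmetries; decomposing $\R^{1+n+n^2}$ into the corresponding isotypic components block-diagonalizes $S$ into pieces whose size does not grow with $n$ --- a low-dimensional ``mean'' block and ``fluctuation'' blocks whose eigenvalues are explicit rational functions of $(M,n,v,\alpha^*,\beta^*)$ --- so PSD-ness reduces to non-negativity of finitely many such expressions, which the bounds $\alpha^*\in[\tfrac1M,1)$, $\beta^*\in(\tfrac{\alpha^*}{M},\tfrac{M+1}{2M})$, $v\leq\tfrac{(M-1)n}{(n-1)M}$ of Theorem~\ref{lem:root-interval} deliver (with symbolic verification in \textsc{Mathematica} as a convenience). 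This is the hard step: the first part hinges on invoking~\eqref{eq.stepsize.alpha}--\eqref{eq.stepsize.beta} in exactly the right places so that $S$ collapses onto the designed matrix, and the second requires controlling the reduced eigenvalue expressions from only the stepsize interval and the bound on $v$; the symmetry reduction is what keeps both tasks finite and independent of the number of supports $n$.

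\emph{The upper bound.} Items~(i)--(ii) show that $(\tau,\{\mu_i\}_{i=0}^n,\{\lambda_{i,j}\})$ is feasible for~\eqref{prob.dual.sdp}. For any configuration feasible in~\eqref{pep:one-step}, the Gram matrix $H=P^\top P$ with $P$ as in~\eqref{eq:P_matrix} is feasible for~\eqref{pep:primal_prob} with $\mathrm{Tr}(\bar{C}H)=\E_{\xi_0,\xi_1}[\|x_2-x_*\|^2]$; by weak duality applied to the relaxation of Remark~\ref{rem:prime-dual-equal} (which omits $H=P^\top P$ and whose dual is~\eqref{prob.dual.sdp}) we obtain $\mathrm{Tr}(\bar{C}H)\leq\tau R^2+\sum_{i=0}^n\mu_i\sigma^2$, which is~\eqref{eq.upperbound}.
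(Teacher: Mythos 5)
Your overall architecture matches the paper's: non-negativity from the interval bounds of Theorem~\ref{lem:root-interval}, the PSD constraint via exhibiting the left-hand side of~\eqref{eq.dual.sdpconstraint} as a structured matrix that block-diagonalizes under the permutation symmetry of the $n$ branches and $n$ inner indices, and weak duality for~\eqref{eq.upperbound}. Your one-line checks for $\lambda_{i,*},\lambda_{*,i},\lambda_{*,0},\tau$ are correct (e.g.\ $M\bigl(\alpha^*+(1-\alpha^*)\beta^*\bigr)-\bigl(M\beta^*+\alpha^*(M\beta^*-1)\bigr)=\alpha^*(M+1-2M\beta^*)$ is exactly how the paper handles $\lambda_{*,0}$), and your reduction of the equality constraints to the single identity $\lambda_{0,i}-\lambda_{i,0}=\lambda_{1^{(1)},*}-\lambda_{*,1^{(1)}}$ is also how the paper verifies them. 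The symmetry-based block-diagonalization you sketch is precisely what the paper's Lemma~\ref{lem:semidef} implements with the explicit orthogonal conjugation $S=I_n\oplus(\Xi^{\oplus n}\Gamma^\top)$, reducing $\Delta$ to low-dimensional blocks whose PSD-ness is equivalent to the two scalar inequalities that the $\max\{0,\cdot\}$ definitions of $\mu_0,\mu_i$ are built to enforce.

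The genuine gap is $\lambda_{i,0}\ge 0$. You describe it as ``show the common symmetric term dominates $\tfrac12\lvert\lambda_{1^{(1)},*}-\lambda_{*,1^{(1)}}\rvert$, via Theorem~\ref{lem:root-interval} and, where convenient,~\eqref{eq.stepsize.alpha}--\eqref{eq.stepsize.beta},'' but that dominance is not a consequence of the interval bounds or a routine substitution, and it is by far the hardest piece of item~(i). The paper needs a dedicated auxiliary result, Lemma~\ref{lem:lambda10}: after the substitution, $\lambda_{i,0}$ has the sign of $-r(\beta^*)$ for a specific quadratic $r(\beta)$ in~\eqref{eq.r_func}; to prove $r(\beta^*)\le 0$ one forms the combination $s(\beta)=r(\beta)+q(\beta)/\bigl((M-1)^2n+(M+1)^2(n-1)v\bigr)$, observes $q(\beta^*)=0$ so the problem reduces to the sign of an auxiliary quadratic $t(\alpha)$ at $\alpha^*$, and then establishes $t(\alpha^*)\le0$ by locating $\alpha^*$ to the left of the positive root $\bar\alpha$ of $t$ via a root comparison for the cubic $h$ in~\eqref{eq:h_func}, which itself requires checking the sign of $c_1^2c_2-c_3^2$ for explicitly computed $c_1,c_2,c_3$. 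None of this chain is signaled by your phrase ``where convenient the defining identities,'' and without it $\lambda_{i,0}\ge 0$ (and hence $\lambda_{0,i}\ge 0$, which you correctly derive from it and from $\lambda_{*,0}\ge 0$) is unproved. This is a missing argument, not a flaw in strategy, but it is substantial: it is the technical core of item~(i).
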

\begin{proof}
See Appendix~\ref{sec.app_proof_2}.
\end{proof}

\begin{remark}\label{rem:dual_feasibility}
After fixing problem dependent parameters $(M,n)\in\mathbb{R}_{>1}\times\mathbb{N}$, Theorem~\ref{theo:dual_feasibility} provides a feasible solution to the dual PEP problem~\eqref{prob.dual.sdp}, parametrized by $v \in \left[0, \tfrac{(M-1)n}{(n-1)M}\right]$ and the corresponding our proposed two-step stepsize schedule $(\alpha^*(v), \beta^*(v))$. 
This dual feasible solution induces an upper bound on the optimal objective value of primal problem~\eqref{pep:one-step}; see~\eqref{eq.upperbound}. 
Because $(\alpha^*(v), \beta^*(v))$ depends on $v$, varying $v$ produces different dual solutions and, consequently, different right-hand-side value of~\eqref{eq.upperbound}. 
As $(R,\sigma)\subset\R_{>0}$ are pre-determined parameters, the influence of $v$ on the right hand side of~\eqref{eq.upperbound} can be studied via
\begin{align}\label{eq.objfunc}
    h(v, \tfrac{\sigma}{R}) := \tau(v) + \mu(v)\left(\frac{\sigma}{R}\right)^2\quad \text{with} \quad
    \mu(v) := \sum_{i=0}^n \mu_i(v),
\end{align}
where $\tau(v)$, $\mu_0(v)$, and $\{\mu_i(v)\}_{i\in[n]}$ directly result from~\eqref{eq.tau}--\eqref{eq.mu_i} by varying $v\in\left[0, \tfrac{(M-1)n}{(n-1)M}\right]$.
Accordingly, $R^2 h(v, \tfrac{\sigma}{R})$ represents the right hand side of~\eqref{eq.upperbound}, an upper bound of $\mathbb{E}_{\xi_0,\xi_1}[\|x_2 - x_\ast\|^2]$.
When $v = 0$, as shown in~\eqref{eq.alignsilver}, the stepsize schedule recovers the deterministic two-step silver stepsize in~\cite{altschuler2025acceleration}, and~\eqref{eq.tau} indicates
\begin{align}\label{eq.tauSilver}
    \tau(0) = \left(\frac{\sqrt{2M^2 - 2M + 1} - M}{2 + \sqrt{2M^2 - 2M + 1} - M}\right)^2,
\end{align}
which also matches the convergence rate of the deterministic two-step silver stepsize schedule in~\cite{altschuler2025acceleration} \footnote{This computation can be verified symbolically via \textsc{Mathematica}; see \url{https://github.com/LuweiYY/LongStepSize_SGD}.}.
However, direct computation of~\eqref{eq.mu_0}, \eqref{eq.mu_i}, and~\eqref{eq.objfunc} yields $\mu(0) = +\infty$, implying that although $\tau(0)$ recovers the deterministic scenario, the variance-related term $\mu(v)\frac{\sigma^2}{R^2}$ diverges, leading to an unbounded $h(v, \tfrac{\sigma}{R})$ in~\eqref{eq.objfunc} within our stochastic regime.

As illustrated in Figures~\ref{fig:mu-v}--\ref{fig:tau-v}, increasing $v$ makes $\tau(v)$ increase but $\mu(v)$ decrease sharply near $v=0$. 
Hence, the parameter $v$ directly controls the trade-off between $\tau(v)$ and $\mu(v)$. 
For any fixed $\frac{\sigma}{R}$, the optimal $v$ that minimizes $h(v,\tfrac{\sigma}{R})$ in~\eqref{eq.objfunc} should balance these two terms. 
When $\frac{\sigma}{R}$ decreases towards zero, a sufficiently small value of $v \approx 0$ would be preferred. On the other hand, as $\frac{\sigma}{R}$ increases, a large value of $v$ is needed to compensate for the variance through a small $\mu(v)$ term. This insight has also been validated numerically; see Figure~\ref{fig:obj-vs-v}. As shown in Figure~\ref{fig:obj-vs-v}, for each fixed $(R, \sigma)$ parameters, there exists an optimal choice of $v$ that minimizes $h(v,\tfrac{\sigma}{R})$ in~\eqref{eq.objfunc}.

\begin{figure}[htbp]
    \centering
    \begin{subfigure}[t]{0.32\textwidth}
        \centering
        \includegraphics[width=\textwidth]{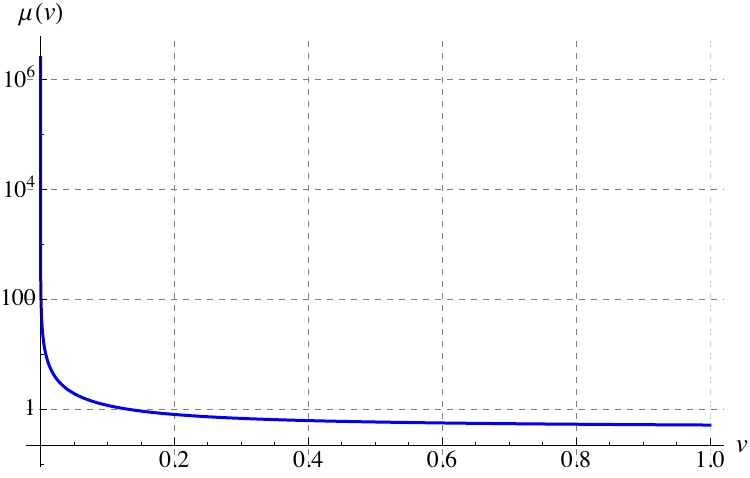}
        \caption{Plot of $\mu(v)$ versus $v$.}
        \label{fig:mu-v}
    \end{subfigure}
    \hfill
    \begin{subfigure}[t]{0.32\textwidth}
        \centering
        \includegraphics[width=\textwidth]{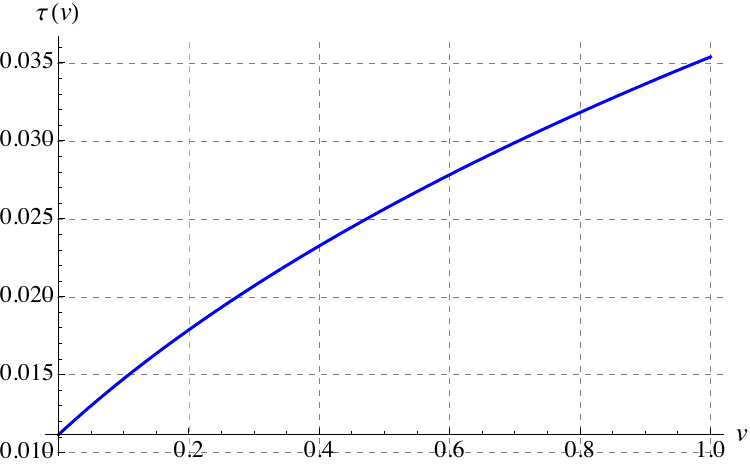}
        \caption{Plot of $\tau(v)$ versus $v$.}
        \label{fig:tau-v}
    \end{subfigure}
    \hfill
    \begin{subfigure}[t]{0.32\textwidth}
        \centering
        \includegraphics[width=\textwidth]{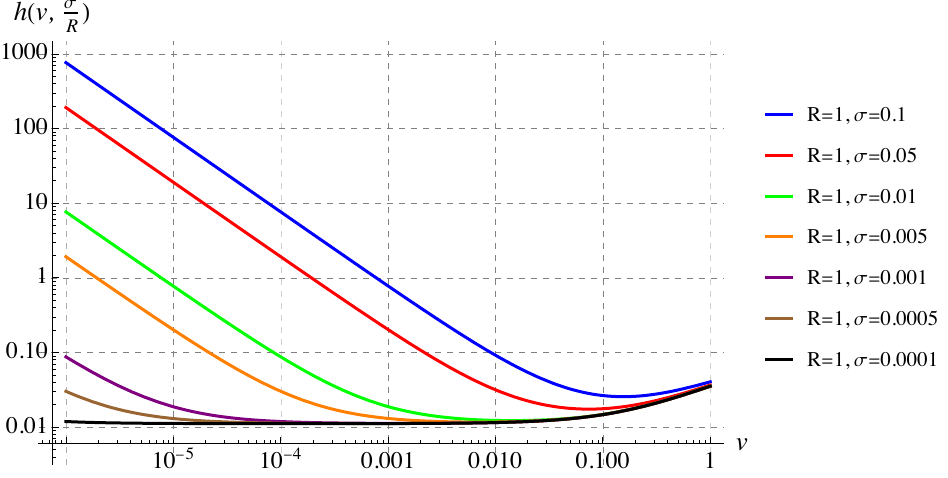}
        \caption{Plot of $h(v,\tfrac{\sigma}{R})$ versus $v$.}
        \label{fig:obj-vs-v}
    \end{subfigure}

    \caption{Dependence of $\mu(v)$, $\tau(v)$, and $h(v,\tfrac{\sigma}{R})$ on $v$ when $(M,n) = (2,2)$.}
    \label{fig:v-analysis}
\end{figure}
\end{remark}

Our final theorem demonstrates that, under the common condition of $\frac{\sigma}{R}$ being small enough, our proposed two-step stepsize schedule (presented in~\eqref{eq.stepsize.range}--\eqref{eq.stepsize.beta}) strictly enhances the convergence performance of an SG algorithm compared with the classical constant stepsize $\frac{2}{M+1}$ ($=\frac{2}{M+m}$ in the setting of $m=1$).

\begin{theorem}\label{theo:lowerbound}
Let $(M, n, R, \sigma) \in \R_{>1} \times \N \times \R_{>0} \times \R_{>0}$ be pre-determined fixed parameters. Following Theorem~\ref{lem:root-interval}, we know that each $v \in \left[0, \tfrac{(M-1)n}{(n-1)M}\right]$ determines a unique two-step stepsize schedule $(\alpha^*, \beta^*)$. Let's denote $h_{\mathrm{silver}}(v, R, \sigma)$ and $h_{\mathrm{constant}}(R, \sigma)$ as optimal objective values of problem~\eqref{pep:one-step} under our proposed stepsize schedule $(\alpha^*, \beta^*)$ and constant stepsize schedule $\alpha=\beta=\tfrac{2}{M+1}$, respectively. Then there exists $\mathscr{U}(M,n) > 0$ such that if $\frac{\sigma}{R} < \sqrt{\mathscr{U}(M,n)}$, then
we can always find some $\bar v \in \left[0, \tfrac{(M-1)n}{(n-1)M}\right]$ satisfying
\begin{align}\label{eq.performance.ratio}
    \frac{h_{\mathrm{silver}}(\bar v, R, \sigma)}{h_{\mathrm{constant}}(R, \sigma)}
    \le \mathscr{C} < 1,
\end{align}
where 
\begin{align}\label{eq.ratio}
    \mathscr{C} := \frac{1}{2} + \frac{1}{2}\,
    \frac{(M + 1)^4\!\bigl(\sqrt{2M^2 - 2M + 1} - M\bigr)^2}
         {(M - 1)^4\!\bigl(2 + \sqrt{2M^2 - 2M + 1} - M\bigr)^2}.
\end{align}
\end{theorem}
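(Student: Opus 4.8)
The plan is to upper bound $h_{\mathrm{silver}}$ using the dual-feasible point constructed in Theorem~\ref{theo:dual_feasibility}, to lower bound $h_{\mathrm{constant}}$ by exhibiting one explicit worst-case instance, and then to reduce~\eqref{eq.performance.ratio} to an elementary deterministic inequality together with a continuity argument in the parameter $v$. Write $D := \bigl(\tfrac{M-1}{M+1}\bigr)^4$; by~\eqref{eq.tauSilver}, $\tau(0) = \bigl(\tfrac{\sqrt{2M^2-2M+1}-M}{2+\sqrt{2M^2-2M+1}-M}\bigr)^2$, so the constant in~\eqref{eq.ratio} is exactly $\mathscr{C} = \tfrac12 + \tfrac12\,\tfrac{\tau(0)}{D}$. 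The extra factor $\tfrac12$ is precisely the slack that will absorb the stochastic correction term.

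First I would lower bound $h_{\mathrm{constant}}$. Because the optimal value of~\eqref{pep:one-step} is a supremum, it suffices to display one admissible instance: take $d=1$, $f(x)=\tfrac12 x^2\in\cFMM$ (which is $1$-strongly convex and $1$-smooth, hence $M$-smooth since $M>1$), $x_*=0$, $x_0=R$, and let all $n$ realizations of the stochastic gradient equal $\nabla f$, so its variance is $0\le\sigma^2$ and Assumption~\ref{ass:oracle} holds. Two steps of the SG method with $\alpha=\beta=\tfrac{2}{M+1}$ yield $x_2=\bigl(\tfrac{M-1}{M+1}\bigr)^2 R$, whence $\mathbb{E}[\|x_2-x_*\|^2]=DR^2$ and therefore $h_{\mathrm{constant}}(R,\sigma)\ge DR^2$. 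Next I would record the deterministic gap $\tau(0)<D$: with $s:=\sqrt{2M^2-2M+1}$ one has $s^2-M^2=(M-1)^2>0$, so $s>M$; writing $t:=s-M$ and using $t(s+M)=(M-1)^2$ gives $\tfrac{t}{2+t}=\tfrac{(M-1)^2}{M^2+1+2s}$, hence $\tfrac{\tau(0)}{D}=\bigl(\tfrac{(M+1)^2}{M^2+1+2s}\bigr)^2<1$, precisely because $M^2+1+2s>(M+1)^2$, that is, because $s>M$. Set $\delta:=1-\tfrac{\tau(0)}{D}\in(0,1)$, so $\mathscr{C}=\tfrac{\tau(0)}{D}+\tfrac{\delta}{2}<1$.

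It remains to choose $v$ and $\mathscr{U}(M,n)$. If $n=1$ then $(n-1)v=0$, $(\alpha^*,\beta^*)$ is the deterministic silver schedule, and $\mu_i\equiv0$ by~\eqref{eq.mu_0}--\eqref{eq.mu_i}; taking $\bar v=0$, the bound~\eqref{eq.upperbound} gives $h_{\mathrm{silver}}\le\tau(0)R^2\le\tfrac{\tau(0)}{D}\,h_{\mathrm{constant}}<\mathscr{C}\,h_{\mathrm{constant}}$ for every $\sigma/R$, so any $\mathscr{U}(M,1)>0$ works. For $n\ge2$ I would first establish that $v\mapsto(\alpha^*(v),\beta^*(v))$, hence $v\mapsto\tau(v)$, is continuous on $\bigl[0,\tfrac{(M-1)n}{(n-1)M}\bigr]$: equation~\eqref{eq.stepsize.alpha} is jointly continuous in $(\alpha,v)$ and affine in $v$, and Theorem~\ref{lem:root-interval} supplies a unique root in $\bigl[\tfrac1M,1\bigr)$ for each $v$, so a compactness argument closes it (any subsequential limit of $\alpha^*(v_k)$ as $v_k\to v$ solves the limiting cubic on $\bigl[\tfrac1M,1\bigr]$, and the spurious root $\alpha=1$ occurring at $v=0$ moves to $\alpha>1$ for $v>0$ because the cubic in~\eqref{eq.stepsize.alpha} takes the value $-(M-1)(n-1)v<0$ at $\alpha=1$, so the limit must be $\alpha^*(v)$); the same handles $\beta^*$ via~\eqref{eq.stepsize.beta}, and substituting~\eqref{eq.lambda_i*}--\eqref{eq.lambda_*0} into~\eqref{eq.tau} expresses $\tau=1-\tfrac{2M(\alpha^*+\beta^*-\alpha^*\beta^*)}{1+M+(M-1)\alpha^*}$ (whose denominator is positive), giving continuity of $\tau$. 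Then I may fix $\bar v\in\bigl(0,\tfrac{(M-1)n}{(n-1)M}\bigr]$ with $\tau(\bar v)\le\tau(0)+\tfrac{\delta D}{4}$. For this $\bar v$ every denominator in~\eqref{eq.lambda_i*}--\eqref{eq.mu_i} is nonzero --- in particular $n-(n-1)\bar v>0$, since $\tfrac{n}{n-1}>\tfrac{(M-1)n}{(n-1)M}$ --- so $\mu(\bar v):=\sum_{i=0}^n\mu_i(\bar v)<+\infty$, and taking $\mathscr{U}(M,n):=\tfrac{\delta D}{4(\mu(\bar v)+1)}$ forces $\mu(\bar v)\sigma^2\le\tfrac{\delta D}{4}R^2$ whenever $\tfrac{\sigma}{R}<\sqrt{\mathscr{U}(M,n)}$. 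Combining~\eqref{eq.upperbound} (at $v=\bar v$) with $h_{\mathrm{constant}}\ge DR^2$,
\[
\frac{h_{\mathrm{silver}}(\bar v,R,\sigma)}{h_{\mathrm{constant}}(R,\sigma)}\le\frac{\tau(\bar v)R^2+\mu(\bar v)\sigma^2}{DR^2}\le\frac{\tau(0)}{D}+\frac{\delta}{4}+\frac{\delta}{4}=\mathscr{C}<1,
\]
which is~\eqref{eq.performance.ratio}.

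The step I expect to be the main obstacle is the regularity claim used above, namely that the implicitly defined schedule $(\alpha^*(v),\beta^*(v))$ --- hence $\tau(v)$ --- depends continuously on $v$ near $v=0$. The delicacy is that equation~\eqref{eq.stepsize.alpha} acquires the extra root $\alpha=1$ exactly at $v=0$, so one must verify --- via the implicit function theorem after checking that the relevant root is simple, or via the compactness argument sketched above --- that no branch of solutions escapes $\bigl[\tfrac1M,1\bigr)$ as $v\to0^+$. Once this is secured the rest is bookkeeping, and the \emph{substantive} content --- that the two-step silver schedule strictly outperforms the constant stepsize $\tfrac{2}{M+1}$ in the deterministic regime --- collapses to the one-line fact $M<\sqrt{2M^2-2M+1}$, equivalently $(M-1)^2>0$.
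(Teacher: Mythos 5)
Your proof is correct and establishes the theorem, but it takes a genuinely different (and in some places simpler) route from the paper's argument, so it is worth spelling out the differences. For the lower bound on $h_{\mathrm{constant}}(R,\sigma)$ you display a purely deterministic instance ($f(x)=\tfrac12 x^2$, noiseless oracle) and obtain $h_{\mathrm{constant}}\ge DR^2$ with $D=\bar\tau=\bigl(\tfrac{M-1}{M+1}\bigr)^4$. The paper instead builds a $d$-dimensional quadratic with a $\pm\boldsymbol{\sigma}_d$ two-point stochastic oracle and obtains the strictly stronger lower bound $h_{\mathrm{constant}}\ge\bar\tau R^2+\bar\mu\,\sigma^2$ with $\bar\mu=\tfrac{8(1+M^2)}{(1+M)^4}$. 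This extra $\bar\mu\sigma^2$ term is precisely what the paper trades against $\mu(v)\sigma^2$ when it defines
$\mathscr{U}(M,n)=\sup_{v\in(0,v_{\max})}\tfrac{\tau(v)-\mathscr{C}\bar\tau}{\mathscr{C}\bar\mu-\mu(v)}$, so its admissible noise threshold is (at least in principle) larger. You discard that term and instead pick a single $\bar v>0$ with $\tau(\bar v)\le\tau(0)+\tfrac{\delta D}{4}$ and define $\mathscr{U}(M,n)=\tfrac{\delta D}{4(\mu(\bar v)+1)}$; this is smaller but proves the same qualitative statement, because the theorem only asserts the existence of some $\mathscr{U}>0$. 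Your verification that $\mathscr{C}<1$ — reducing to $s:=\sqrt{2M^2-2M+1}>M$, hence $M^2+1+2s>(M+1)^2$ — is cleaner than the paper's polynomial expansion and is a genuine simplification. Finally, you correctly identify the one delicate step, the continuity of $v\mapsto(\alpha^*(v),\beta^*(v))$ near $v=0$; the paper asserts it briefly, and your sketch (compactness plus the observation that the spurious root $\alpha=1$ leaves $[\tfrac1M,1]$ for $v>0$) is the right mechanism, though to make the argument watertight one should also note that $\alpha^*(0)<r<1$ lies strictly on the decreasing branch of the cubic, so $\alpha^*(0)$ is a simple root and the implicit function theorem applies, ruling out a jump of $\alpha^*(v_k)$ toward $1$ as $v_k\to 0^{+}$.
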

\begin{proof}
    See Appendix~\ref{sec.app_proof_3}.
\end{proof}

\begin{remark}\label{rem:lowerbound}
Theorem~\ref{theo:lowerbound} focuses on the regime where $\tfrac{\sigma}{R}$ is sufficiently small, which corresponds to the most practical stochastic optimization settings. In this regime, the theorem guarantees that for any adequately small $\frac{\sigma}{R}$, it is always possible to select a parameter $\bar v \in \left[0, \tfrac{(M-1)n}{(n-1)M}\right]$ such that the proposed stepsize schedule outperforms the constant stepsize $\tfrac{2}{M+1}$ by a constant factor $\mathscr{C} < 1$. The constant $\mathscr{C}$ measures this improvement in terms of the optimal objective value associated with~\eqref{pep:one-step}.  
As shown in Figure~\ref{fig:c-vs-M}, $\mathscr{C}$ lies approximately in the interval $(0.92,1)$. Meanwhile, Figure~\ref{fig:upper-vs-v} plots $\sqrt{\mathscr{U}(M,n)}$ over  $(M,n)\in(1,20]\times ([100]\backslash\{1\})$ and demonstrates that the admissible region $\tfrac{\sigma}{R} < \sqrt{\mathscr{U}(M,n)}$ remains fairly large across a wide range of problem parameters. It is worth mentioning that when $M\approx 1$ (equivalently, $M\approx m$), the constant stepsize $\frac{2}{M+1}$ already produces an iterate that is nearly optimal within a single step along the negative gradient direction. In the stochastic setting of our interest, when $\frac{\sigma}{R}$ is small and $M\approx 1$, this one-step update remains nearly optimal, up to an additional $\sigma$-dependent error term in the distance to $x_*$. Consequently, when $\frac{\sigma}{R}$ is sufficiently small and $M\approx 1$, the constant stepsize policy becomes intrinsically difficult to outperform. This behavior is consistent with Figure~\ref{fig:c-vs-M}, where $\mathscr{C}\nearrow 1$ as $M\searrow 1$.

\begin{figure}[htbp]
    \centering
    \setlength{\abovecaptionskip}{2pt}
    \setlength{\belowcaptionskip}{-2pt}
    \begin{subfigure}[t]{0.4\textwidth}
        \centering
        \includegraphics[width=\textwidth]{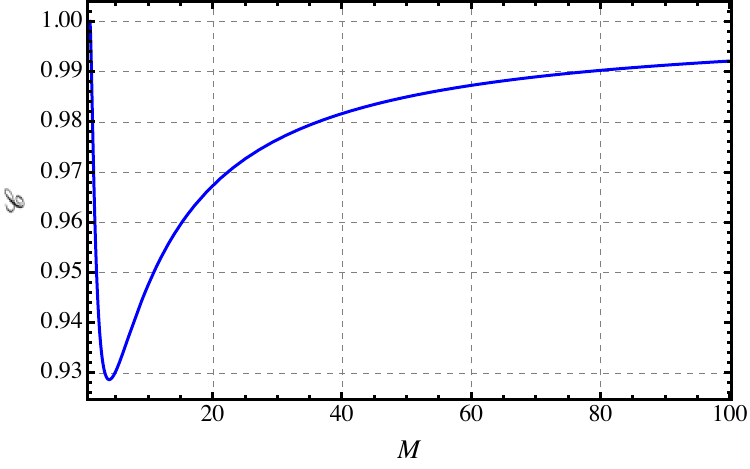}
        \caption{\small Dependence of $\mathscr{C}$ on $M$.}
        \label{fig:c-vs-M}
    \end{subfigure}
    \hfill
    \begin{subfigure}[t]{0.4\textwidth}
        \centering
        \includegraphics[width=\textwidth]{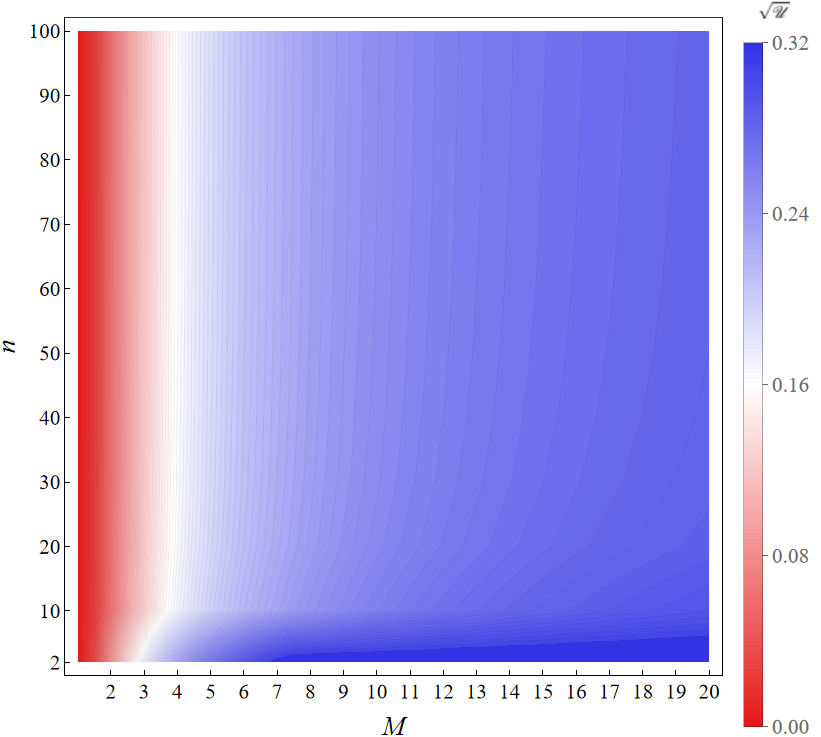}
        \caption{\small Dependence of $\sqrt{\mathscr{U}(M,n)}$ on $M$ and $n$.}
        \label{fig:upper-vs-v}
    \end{subfigure}

    \caption{\small 
    Illustration of the quantities $\mathscr{C}$ and $\sqrt{\mathscr{U}(M,n)}$ defined in Theorem~\ref{theo:lowerbound}.}
    \label{fig:auxiliary-small}
\end{figure}
\end{remark}

\section{Numerical Validation}\label{sec.numerical}
Under the assumption $m = 1$, this section presents numerical experiments that validate the theory and compare the proposed two-step stepsize schedule with the constant stepsize $\tfrac{2}{M+1}$. Although the direct evaluation of $h_{\mathrm{silver}}(v, R, \sigma)$ is difficult, as indicated by Theorem~\ref{theo:dual_feasibility} and Remark~\ref{rem:dual_feasibility}, the proposed schedule admits an explicit dual-feasible construction that provides a computable upper bound such as $h(v,\frac{\sigma}{R}) \geq \frac{h_{\mathrm{silver}}(v, R, \sigma)}{R^2}$ for any $v \in \left[0, \tfrac{(M-1)n}{(n-1)M}\right]$. For the constant-stepsize benchmark, the primal PEP formulation in~\eqref{pep:primal_prob} can be solved directly with the SDP solver \texttt{SDPT3}~\cite{tutuncu2003solving} to obtain the optimal objective value $h_{\mathrm{constant}}(R, \sigma)$, and we report its rescaled value $\tfrac{h_{\mathrm{constant}}(R, \sigma)}{R^2}$. By comparing $h(v, \tfrac{\sigma}{R})$ with $\tfrac{h_{\mathrm{constant}}(R, \sigma)}{R^2}$, the following interpretation holds:
\begin{itemize}
\item If $h(v,\frac{\sigma}{R}) < \frac{h_{\mathrm{constant}}(R, \sigma)}{R^2}$ for some $v \in \left[0, \tfrac{(M-1)n}{(n-1)M}\right]$, then~\eqref{eq.upperbound}--\eqref{eq.objfunc} imply that such $v$ also results in $h_{\mathrm{silver}}(v, R, \sigma) < h_{\mathrm{constant}}(R, \sigma)$. In this case, the proposed two-step schedule achieves better performance than the constant stepsize $\tfrac{2}{M+1}$.
\item If $h(v,\frac{\sigma}{R}) \geq \frac{h_{\mathrm{constant}}(R, \sigma)}{R^2}$ for all $v \in \left[0, \tfrac{(M-1)n}{(n-1)M}\right]$, then our proposed stepsize schedule may not be more competitive than the constant stepsize policy. As illustrated in Figure~\ref{fig.final}, this situation occurs only when $\tfrac{\sigma}{R}$ is not sufficiently small, which is consistent with the behavior described in Theorem~\ref{theo:lowerbound}.
\end{itemize}
Figure~\ref{fig.final} illustrates the comparison between the computable upper bound $h(v,\frac{\sigma}{R})$ and the rescaled constant-stepsize performance $\frac{h_{\mathrm{constant}}(R, \sigma)}{R^2}$, especially their changes depending on the relative noise level~$\frac{\sigma}{R}$, the smoothness parameter~$M$, and the number of stochastic supports~$n$.
According to Theorem~\ref{theo:lowerbound}, the proposed schedule outperforms the constant stepsize $\tfrac{2}{M+1}$ whenever $\tfrac{\sigma}{R} < \sqrt{\mathscr{U}(M, n)}$, and the value $\sqrt{\mathscr{U}(M, n)}$ depends explicitly on $M$ and $n$; see Figure~\ref{fig:upper-vs-v}. This prediction agrees with the numerical results shown in Figure~\ref{fig.final}. As shown by the first plot in the first row of Figure~\ref{fig.final}\textbf{(a)}, corresponding to $(M, n, \tfrac{\sigma}{R}) = (2, 2, 0.1)$, there is no value of $v\in\left[0, \tfrac{(M-1)n}{(n-1)M}\right]$ for which $h(v, \tfrac{\sigma}{R}) < \tfrac{h_{\mathrm{constant}}(R, \sigma)}{R^2}$. This observation matches the fact that $\tfrac{\sigma}{R} = 0.1$ is larger than $\sqrt{\mathscr{U}(2, 2)} \approx 0.0886$. Once $\tfrac{\sigma}{R}$ becomes smaller than $\sqrt{\mathscr{U}(2, 2)} \approx 0.0886$, as seen in the second through fourth plots in the first row of Figure~\ref{fig.final}\textbf{(a)}, there always exists a nonempty interval of $v$ for which $h\!\left(v, \tfrac{\sigma}{R}\right) < \tfrac{h_{\mathrm{constant}}(R, \sigma)}{R^2}$. This interval becomes larger as $M$ increases, which can be observed from a comparison across different rows of Figures~\ref{fig.final}\textbf{(a)}--\textbf{(b)}, considering different selections of $n\in\{2,100\}$. This trend is consistent with the monotonic growth of $\mathscr{U}(M, n)$ in~$M$ for general $n\in\mathbb{N}_{\geq 2}$, shown in Figure~\ref{fig:upper-vs-v}.

\begin{figure}[htbp]
    \centering
    \captionsetup[subfigure]{labelformat=empty}

    \begin{minipage}{\textwidth}
        \centering
        \begin{subfigure}[t]{0.23\textwidth}\centering
            \includegraphics[width=\textwidth]{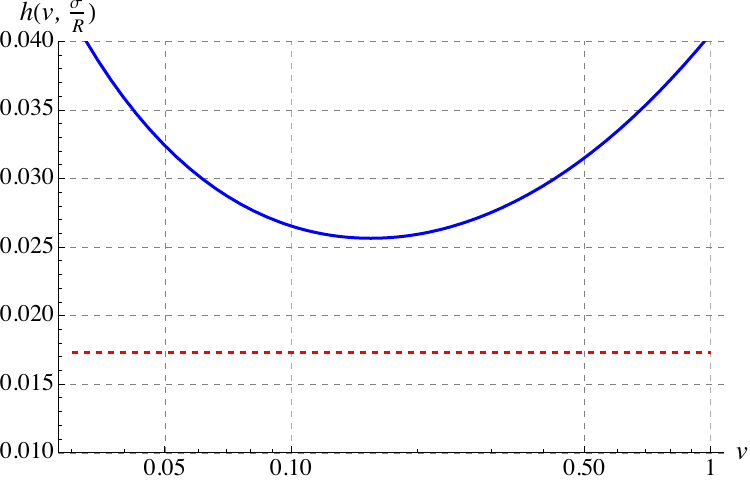}
        \end{subfigure}\hfill
        \begin{subfigure}[t]{0.23\textwidth}\centering
            \includegraphics[width=\textwidth]{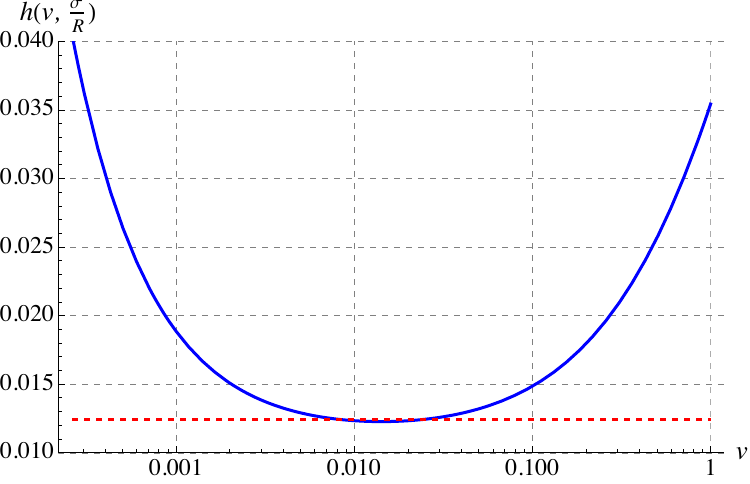}
        \end{subfigure}\hfill
        \begin{subfigure}[t]{0.23\textwidth}\centering
            \includegraphics[width=\textwidth]{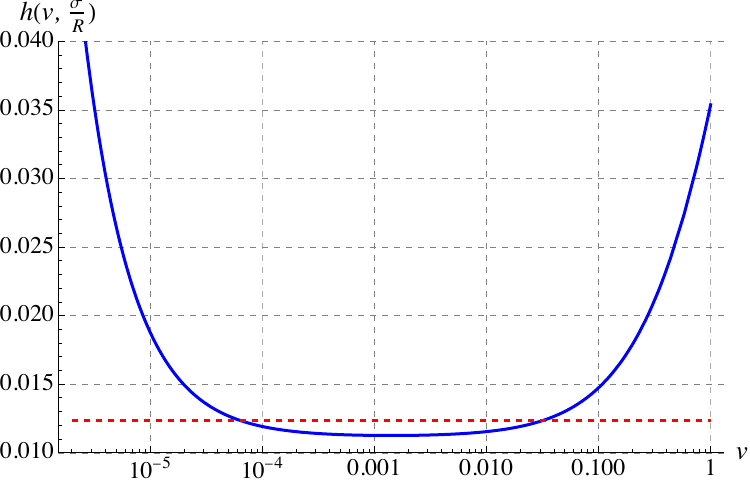}
        \end{subfigure}\hfill
        \begin{subfigure}[t]{0.23\textwidth}\centering
            \includegraphics[width=\textwidth]{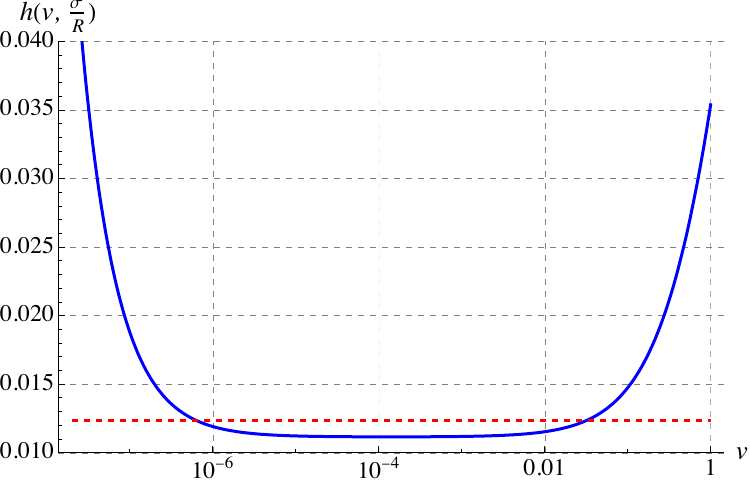}
        \end{subfigure}
    \end{minipage}

    \vspace{6pt}

    \begin{minipage}{\textwidth}
        \centering
        \begin{subfigure}[t]{0.23\textwidth}\centering
            \includegraphics[width=\textwidth]{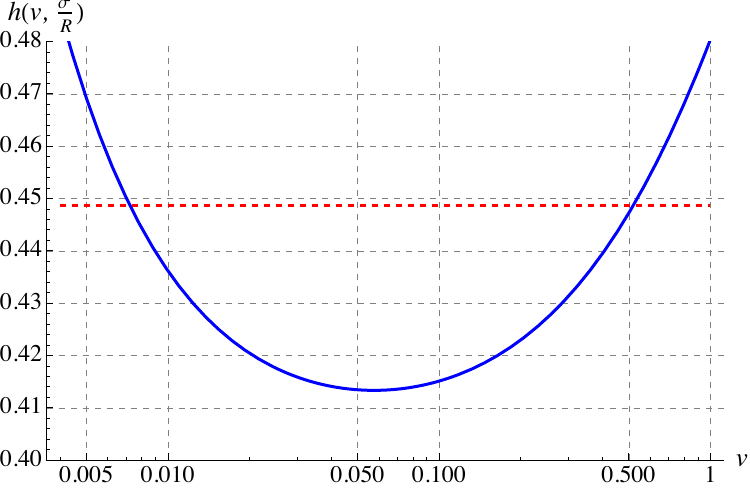}
        \end{subfigure}\hfill
        \begin{subfigure}[t]{0.23\textwidth}\centering
            \includegraphics[width=\textwidth]{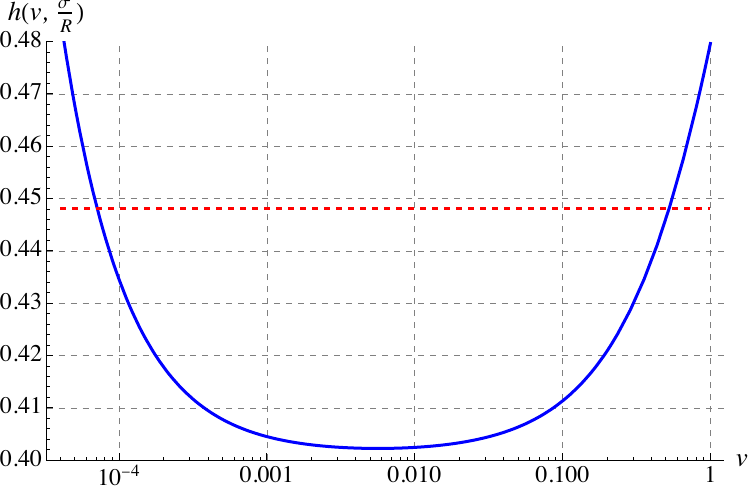}
        \end{subfigure}\hfill
        \begin{subfigure}[t]{0.23\textwidth}\centering
            \includegraphics[width=\textwidth]{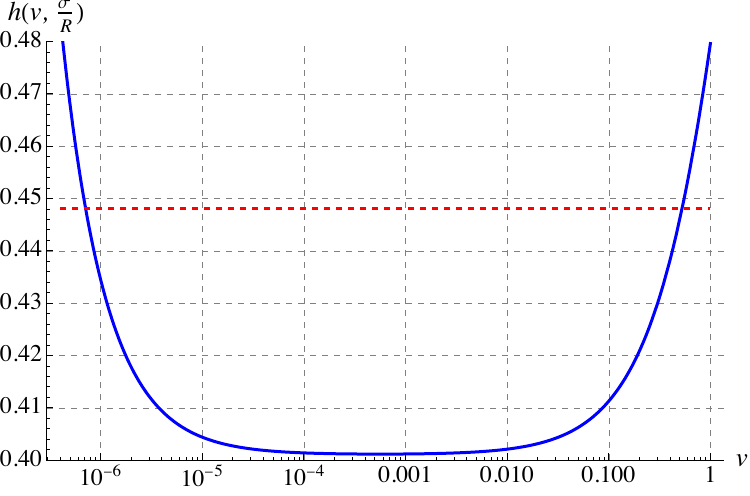}
        \end{subfigure}\hfill
        \begin{subfigure}[t]{0.23\textwidth}\centering
            \includegraphics[width=\textwidth]{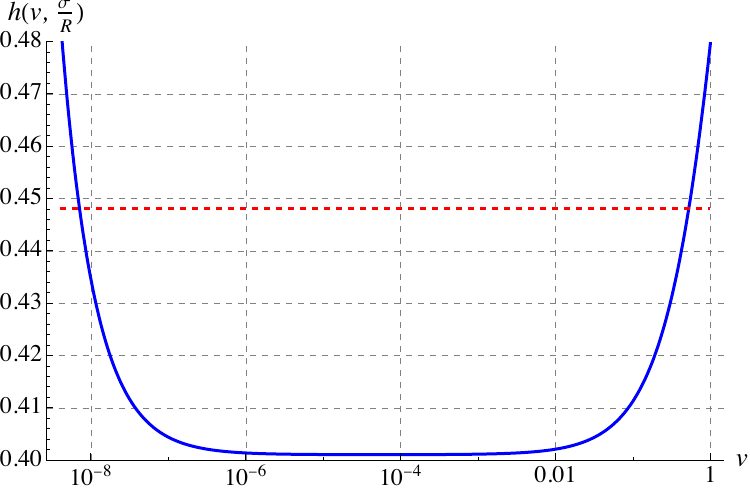}
        \end{subfigure}
    \end{minipage}

    \vspace{6pt}

    \begin{minipage}{\textwidth}
        \centering
        \begin{subfigure}[t]{0.23\textwidth}\centering
            \includegraphics[width=\textwidth]{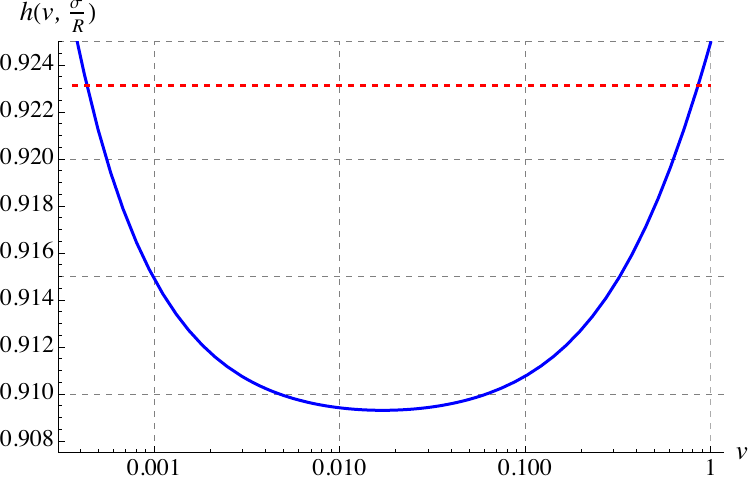}
        \end{subfigure}\hfill
        \begin{subfigure}[t]{0.23\textwidth}\centering
            \includegraphics[width=\textwidth]{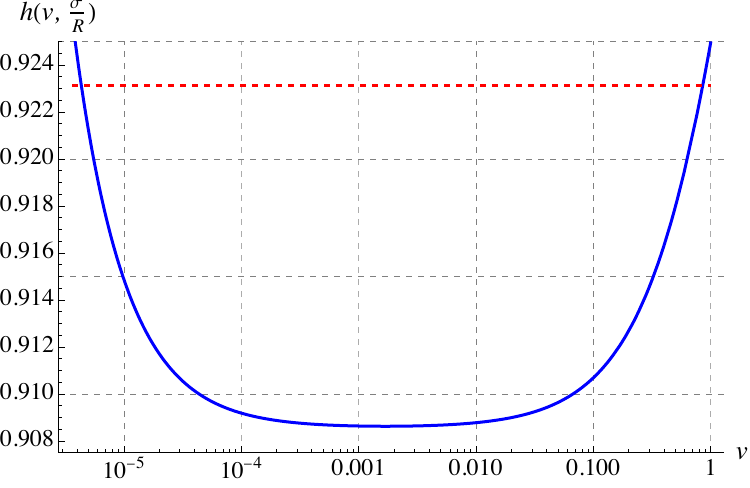}
        \end{subfigure}\hfill
        \begin{subfigure}[t]{0.23\textwidth}\centering
            \includegraphics[width=\textwidth]{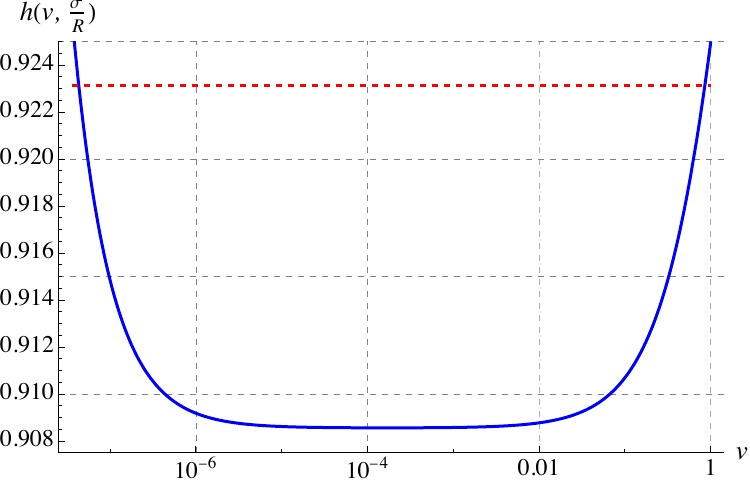}
        \end{subfigure}\hfill
        \begin{subfigure}[t]{0.23\textwidth}\centering
            \includegraphics[width=\textwidth]{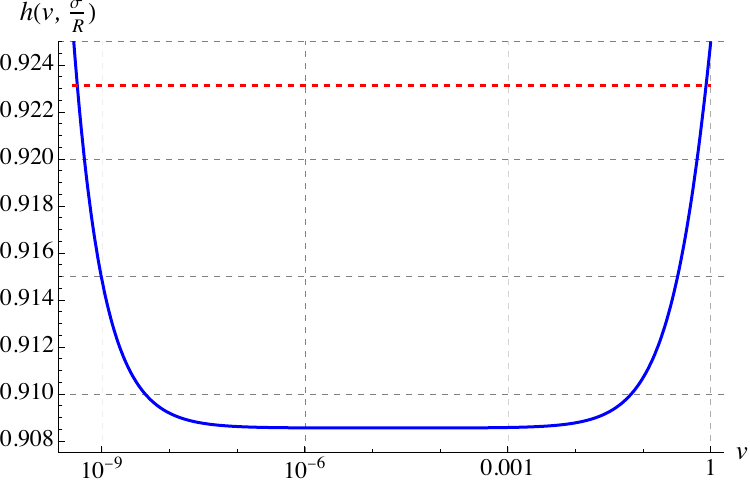}
        \end{subfigure}
    \end{minipage}

    \vspace{4pt}
    \centering
    \textbf{(a)}\; Fixing $n=2$ with different $M$ values; top row: $M=2$, middle row: $M=10$, bottom row: $M=100$.


    \vspace{10pt}

    \begin{minipage}{\textwidth}
        \centering
        \begin{subfigure}[t]{0.23\textwidth}\centering
            \includegraphics[width=\textwidth]{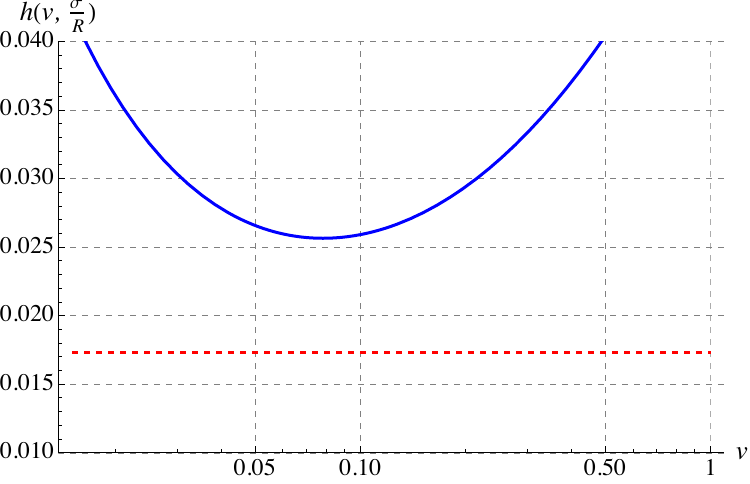}
        \end{subfigure}\hfill
        \begin{subfigure}[t]{0.23\textwidth}\centering
            \includegraphics[width=\textwidth]{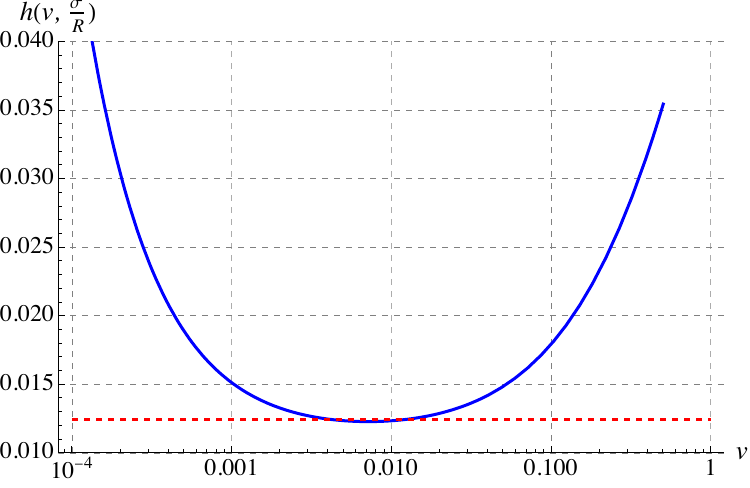}
        \end{subfigure}\hfill
        \begin{subfigure}[t]{0.23\textwidth}\centering
            \includegraphics[width=\textwidth]{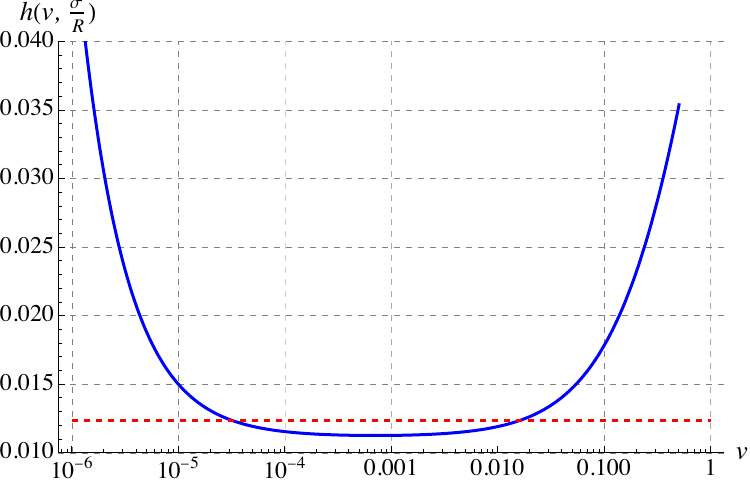}
        \end{subfigure}\hfill
        \begin{subfigure}[t]{0.23\textwidth}\centering
            \includegraphics[width=\textwidth]{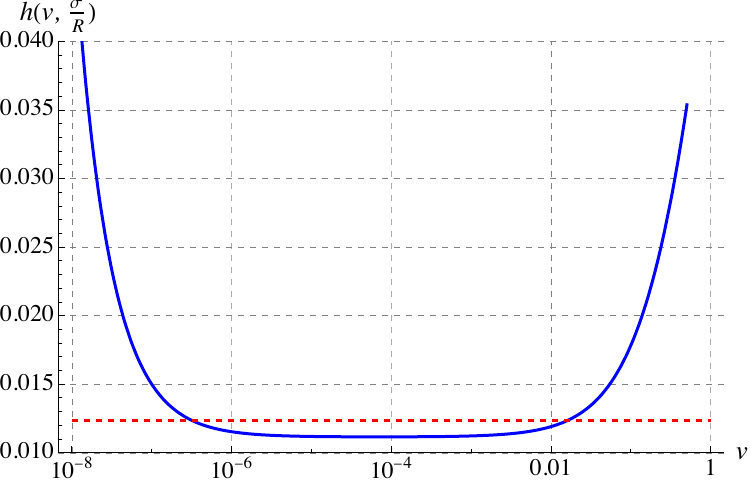}
        \end{subfigure}
    \end{minipage}

    \vspace{6pt}

    \begin{minipage}{\textwidth}
        \centering
        \begin{subfigure}[t]{0.23\textwidth}\centering
            \includegraphics[width=\textwidth]{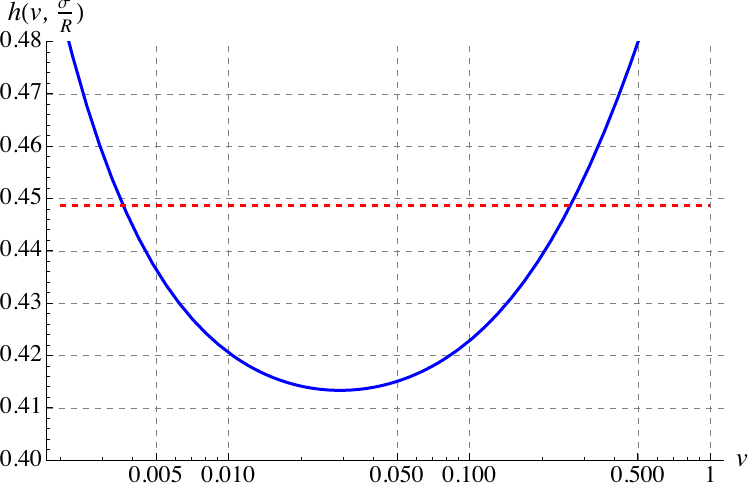}
        \end{subfigure}\hfill
        \begin{subfigure}[t]{0.23\textwidth}\centering
            \includegraphics[width=\textwidth]{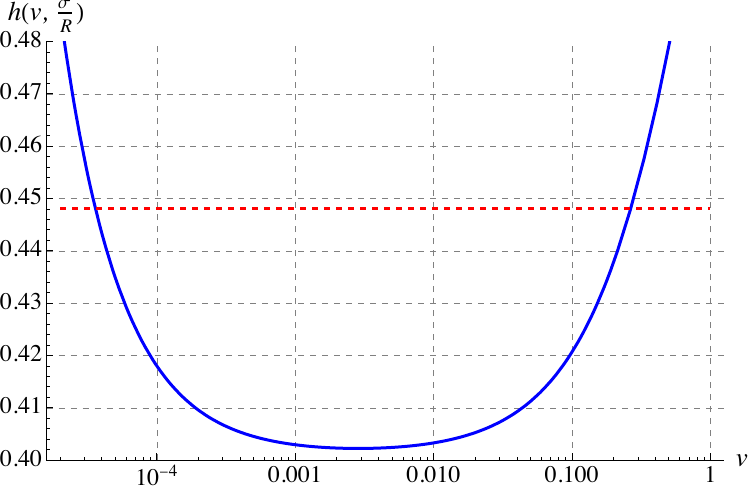}
        \end{subfigure}\hfill
        \begin{subfigure}[t]{0.23\textwidth}\centering
            \includegraphics[width=\textwidth]{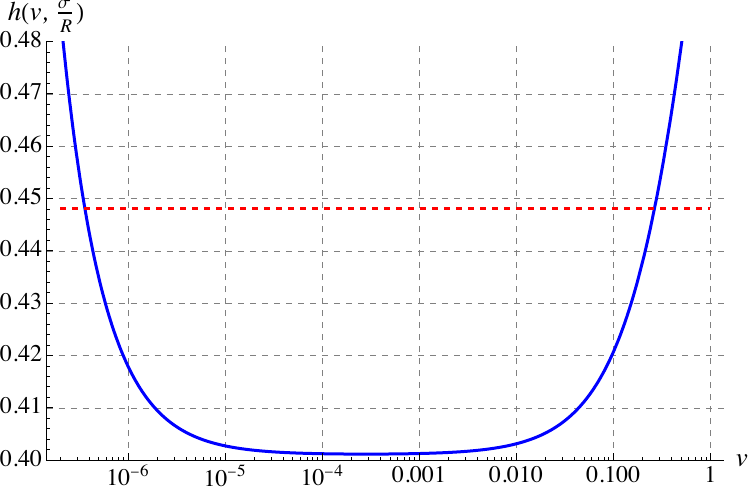}
        \end{subfigure}\hfill
        \begin{subfigure}[t]{0.23\textwidth}\centering
            \includegraphics[width=\textwidth]{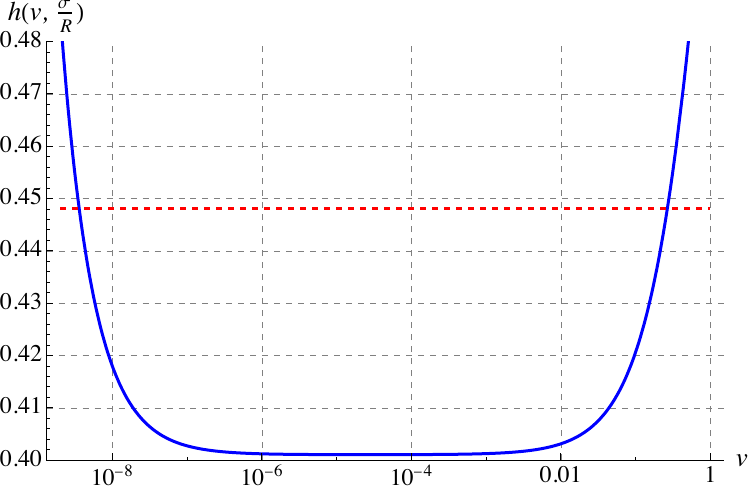}
        \end{subfigure}
    \end{minipage}

    \vspace{6pt}

    \begin{minipage}{\textwidth}
        \centering
        \begin{subfigure}[t]{0.23\textwidth}\centering
            \includegraphics[width=\textwidth]{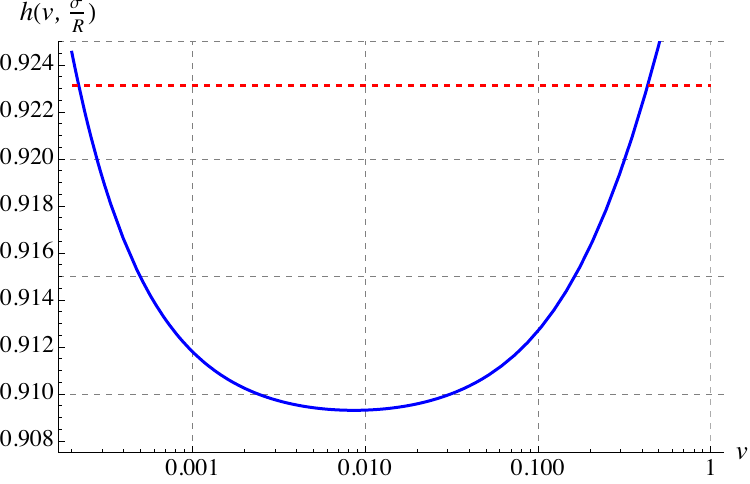}
        \end{subfigure}\hfill
        \begin{subfigure}[t]{0.23\textwidth}\centering
            \includegraphics[width=\textwidth]{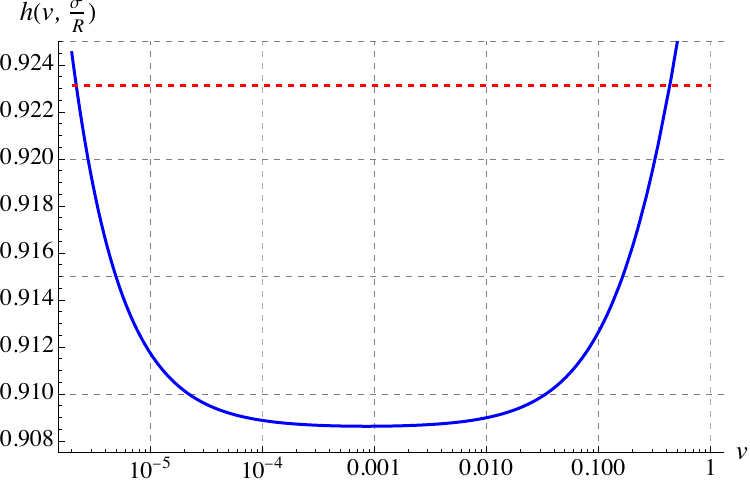}
        \end{subfigure}\hfill
        \begin{subfigure}[t]{0.23\textwidth}\centering
            \includegraphics[width=\textwidth]{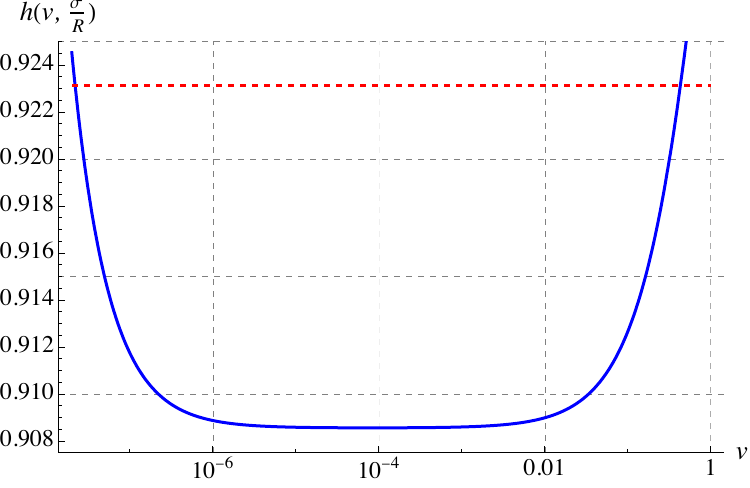}
        \end{subfigure}\hfill
        \begin{subfigure}[t]{0.23\textwidth}\centering
            \includegraphics[width=\textwidth]{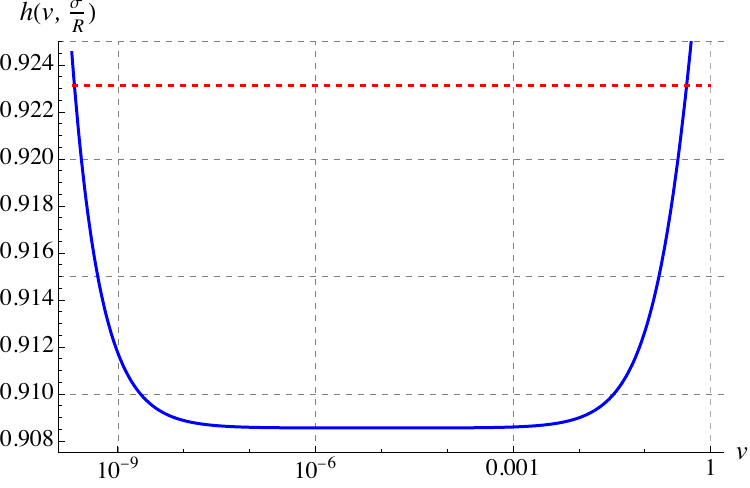}
        \end{subfigure}
    \end{minipage}

    \vspace{4pt}
    \centering
    \textbf{(b)}\;Fixing $n=100$ with different $M$ values; top row: $M=2$, middle row: $M=10$, bottom row: $M=100$.

    \caption{
        Comparison of $h(v, \frac{\sigma}{R})$ and $\frac{h_{\mathrm{constant}}(R, \sigma)}{R^2}$. 
        Each column corresponds to different relative noise levels such as
        $\frac{\sigma}{R} = 0.1$, $0.01$, $0.001$, and $0.0001$ from left to right. In each plot, the blue curve denotes $h(v, \frac{\sigma}{R})$ as a function of $v\in\left[0, \tfrac{(M-1)n}{(n-1)M}\right]$, while the red dashed lines represents $\frac{h_{\mathrm{constant}}(R, \sigma)}{R^2}$, the rescaled optimal objective value of PEP~\eqref{pep:primal_prob} 
        under the constant stepsize $\tfrac{2}{M+1}$.
    }\label{fig.final}
\end{figure}

\section{Conclusion}\label{sec.conclusion}
In this work, we explore a two-step stepsize schedule for stochastic gradient methods within a tractable stochastic PEP framework. The framework extends deterministic performance estimation to a stochastic setting that considers all stochastic gradient estimates are unbiased estimators with bounded variance and generated from a finite number of supports. A feasible solution to the associated dual problem is constructed, yielding our stepsize rule that generalizes the classical two-step silver stepsize policy in~\cite{altschuler2025acceleration} to the stochastic setting. The proposed schedule has been proven to achieve strictly better performance than the constant stepsize $\frac{2}{M+m}$ whenever the initial optimality gap dominates the noise in stochastic gradient estimates, which is a natural condition that commonly holds in practical stochastic optimization. As discussed in Remark~\ref{rem:dual_feasibility}, when the proposed schedule is applied and the resulting iterate moves closer to the optimum in expectation, applying the two-step scheme again at the new starting point suggests the use of a larger value of $v$ for improved convergence performance in terms of $h\left(v, \tfrac{\sigma}{R}\right)$ in~\eqref{eq.objfunc}. This increase in $v$ leads to smaller stepsizes, as illustrated in Figure~\ref{fig:step-vs-v}. This observation matches our common knowledge that SG with diminishing stepsizes may achieve exact convergence in expectation. Meanwhile, how to extend our current two-step stepsize schedule to a multi-step scheme remains as an open question.

\bibliographystyle{plain}
\bibliography{reference}

\newpage

\section{Appendix}
\subsection{Proof of Theorem~\ref{lem:root-interval}}\label{sec.app_proof_1}

\begin{proof}[Proof of Theorem~\ref{lem:root-interval}]
We start with showing the uniqueness of $\alpha^*$ and then prove the uniqueness of $\beta^*$. \\
\textbf{The uniqueness of $\alpha^*$.} With predetermined parameters $(M,n,v)$ described in the statement, we can rewrite~\eqref{eq.stepsize.alpha} as $h(\alpha) =0$, where for any $\alpha\in\R$,
\begin{equation}\label{eq:h_func}
h(\alpha) := (M - 1)Mn\alpha^3 - (M + 1)(M - 2)n\alpha^2 - (4n + (M-1)(n-1)v)\alpha + 2n. 
\end{equation}
Consider the case $n=1$. $h(\alpha)$ is a cubic function in $\alpha$, where $h(\alpha) = 0$ (or equivalently, \eqref{eq.stepsize.alpha}) has closed-form solutions. By solving $h(\alpha) = 0$, we find out that within the interval $\left[\frac{1}{M},1\right)$, there is a unique
\begin{equation}\label{eq:silver.alpha}
    \alpha^* = \frac{2}{\,1 + \sqrt{\,1 - 2M + 2M^2\,}}
\end{equation}
satisfying $h(\alpha^*) = 0$. Therefore, the uniqueness of $\alpha^*\in\left[\frac{1}{M},1\right)$ holds when $n=1$.

On the other hand, let's consider the case when $n\in\N_{>1}$. In this scenario, it directly follows~\eqref{eq:h_func} that
\begin{equation}\label{eq:h_function_val}
h\left(1\right)
    = -(M - 1)(n-1)v \quad \text{and} \quad
h\left(\tfrac{1}{M}\right)
    = \frac{(M - 1)\left(M v + n (M - 1 - M v)\right)}{M^2}.
\end{equation}
Note that for any $(M,n,v)\in\R_{>1}\times\N_{>1}\times\left[0, \tfrac{(M-1)n}{(n-1)M}\right]$, we have $h(1) = -(M - 1)(n-1)v \leq 0$. Next, we are going to show that $h\left(\tfrac{1}{M}\right) \geq 0$. To this end, let's define
\begin{align}\label{eq:phi_def}
\phi(v) := M v + n (M - 1 - M v) = v(M - Mn) + n(M - 1),
\end{align}
which is an affine function of $v$ and monotonically decreases as $v$ increasing because of $M - Mn < 0$ for $(M,n)\in\R_{>1}\times\N_{>1}$. Therefore, for any $v\in\left[0, \tfrac{(M-1)n}{(n-1)M}\right]$, we have from~\eqref{eq:phi_def} that
\begin{align*}
\phi(v) \geq \phi\left(\tfrac{(M-1)n}{(n-1)M}\right) = 0,
\end{align*}
which further implies from~\eqref{eq:h_function_val} that
\begin{equation}\label{eq:h_endpoint_val}
h\left(\tfrac{1}{M}\right) = \frac{(M-1)\phi(v)}{M^2} \geq 0.
\end{equation}
Since function $h:\R\to\R$ is continuous with $h\left(\tfrac{1}{M}\right) \geq 0$ and $h(1) \leq 0$, the intermediate value theorem guarantees the existence of some $\alpha$ within $\left[\tfrac{1}{M},1\right]$ satisfying $h(\alpha) = 0$ (or equivalently,~\eqref{eq.stepsize.alpha}).

Then we prove the uniqueness of $\alpha^*\in\left[\tfrac{1}{M},1\right)$ satisfying $h(\alpha^*) = 0$. Based on~\eqref{eq:h_func}, let's consider the derivative function $h':\R\to\R$, where for any $\alpha\in\R$,
\begin{equation}\label{eq:h_derivative}
h'(\alpha)
= 3(M-1)Mn\alpha^{2}
- 2(M+1)(M-2)n\alpha
- 4n - (M-1)(n-1)v .
\end{equation}
Following~\eqref{eq:h_derivative}, we know
\begin{equation}\label{eq:h_derivative_sign}
h'(1)=(M-1)\bigl(Mn-(n-1)v\bigr)>0
\quad\text{and}\quad
h'\left(\tfrac{1}{M}\right)
=-\,\frac{(M-1)\bigl(n + 2Mn + M(n-1)v\bigr)}{M} < 0,
\end{equation}
where the inequalities are from parameter selections $(M,n,v)\in\R_{>1}\times\N_{>1}\times\left[0, \tfrac{(M-1)n}{(n-1)M}\right]$. Because $3(M-1)Mn > 0$, $h'(\alpha)$ in~\eqref{eq:h_derivative} is a strongly convex quadratic function. Moreover, it follows~\eqref{eq:h_derivative_sign} that there exists
$r\in\left(\frac{1}{M},1\right)$ such that
\begin{equation}\label{eq:h_monotonicity}
h'(r) = 0, \quad
h'(x) < 0 \ \ \forall x\in\left[\tfrac{1}{M},r\right),
\quad\text{and}\quad
h'(x) > 0 \ \ \forall x\in(r,1].
\end{equation}
Hence, function $h:\R\to\R$ is strictly monotonically decreasing on $\left[\frac{1}{M},r\right]$ and strictly monotonically increasing on $[r,1]$. Depending on whether $v=0$ or not, there are two cases as follows:\\
\textbf{Case (i).} When $v\neq 0$, we have $h(1)<0$ from~\eqref{eq:h_function_val}. Together with~\eqref{eq:h_endpoint_val} and the monotonicity revealed in~\eqref{eq:h_monotonicity}, we have that there exists a unique $\alpha^*\in\left[\frac{1}{M},1\right)$ satisfying $h(\alpha^*) = 0$.\\
\textbf{Case (ii).} When $v=0$, \eqref{eq:h_function_val} implies $h\left(\frac{1}{M}\right) > 0$ and $h(1)=0$. In addition, by the help of~\eqref{eq:h_monotonicity}, in this case, there also exists a unique $\alpha^*\in\left[\frac{1}{M},1\right)$ satisfying $h(\alpha^*) = 0$.

Combining both cases discussed above, we complete the proof that demonstrates the uniqueness of $\alpha^*$.

Based on the uniqueness of $\alpha^*\in\left[\frac{1}{M},1\right)$ such that $\alpha=\alpha^*$ performs as a solution of~\eqref{eq.stepsize.alpha}, we are going to show the uniqueness of $\beta^*\in\left(\frac{\alpha^*}{M},\frac{M+1}{2M}\right)$ with $(\alpha^*,\beta^*)$ satisfying~\eqref{eq.stepsize.beta}. \\
\textbf{The uniqueness of $\beta^*$.} After fixing $\alpha = \alpha^*$, we denote~\eqref{eq.stepsize.beta} in the format of $q(\beta) = 0$, where $\beta$ is the only changing variable and function $q:\R\to\R$ also relies on parameters $(M,n,v,\alpha^*)$, i.e.,
\begin{align}
q(\beta) = \ &M n \left((-1 + M)^2 n + (1 + M)^2 (-1 + n) v \right) 
\left( (-1+M)\alpha^* + (1+M) \right)\beta^2 \label{eq.q_func} \\
&+ 2M \Bigl[
       2(-1+M)Mn(-n + (n-1)v) (\alpha^*)^2 \nonumber{} \\ 
&\quad\quad\quad\  
    + (1+M)(-n + (n-1)v)\,((3 - M) n + (-1 + M) (-1 + n) v) \alpha^* \nonumber{} \\
&\quad\quad\quad\  
    + n\left( n - (M - 4) M n - (1 + M) (3 + M) (-1 + n) v \right) 
   \Bigr]\beta \nonumber{} \\
&+ \Bigl[
     -2 M (-1 + M^2) n (-n + (n - 1) v)(\alpha^*)^2 \nonumber{} \\ 
&\quad\ \ 
   + \left( 
        2 (2 + M + 2 M^2 - M^3) n^2 + (-3 + M) (1 + M)^2 (-1 + n) n v + (1 - 
    M) (1 + M)^2 (-1 + n)^2 v^2  
     \right)\alpha^* \nonumber{} \\
&\quad\ \ 
   + 2(1+M)^2 n(-n + (n - 1)v)
   \Bigr]. \nonumber{}
\end{align}
When $n=1$, after substituting the value of $\alpha^*$ (see~\eqref{eq:silver.alpha}) into $q(\beta) = 0$ (or equivalently,~\eqref{eq.stepsize.beta}),  
the resulting quadratic equation of $\beta$ only has one solution within the interval $\left(\tfrac{\alpha^*}{M}, \tfrac{M + 1}{2M}\right)$, which is
\begin{equation}\label{eq.silver.beta}
    \beta^* = \frac{2}{\,1 + 2M - \sqrt{\,2M^2 - 2M + 1\,}}.
\end{equation}
Therefore, the uniqueness of $\beta^*\in\left(\tfrac{\alpha^*}{M}, \tfrac{M + 1}{2M}\right)$ holds when $n=1$.

On the other hand, let's consider the case of $n\in\N_{>1}$. For any given parameters $(M,n,v)\in\R_{>1}\times\N_{>1}\times\left[0, \tfrac{(M-1)n}{(n-1)M}\right]$ and $\alpha^*\in\left[\frac{1}{M},1\right)$ solved from~\eqref{eq.stepsize.alpha}, let's evaluate the value of $q(\beta)$ at $\beta = \frac{\alpha^*}{M}$ and $\beta = \frac{M+1}{2M}$, respectively.
First, it directly follows the definition of function $q:\R\to\R$ that
\begin{equation}\label{eq.q_func_endpoint}
q\left(\tfrac{M+1}{2M}\right) = \frac{(M-1)^2 n}{4M} \Bigl((M - 1)^2 n + (n - 1)(1 + M)^2 v \Bigr)\bigl(1+M + (M-1)\alpha^*\bigr) > 0,
\end{equation}
where the inequality is from the parameter setting of $(M,n,v,\alpha^*)$. Meanwhile, by the help of cubic equation~\eqref{eq.stepsize.alpha}, we can formulate $q\left(\tfrac{\alpha^*}{M}\right)$ as a quadratic form of $\alpha^*$, i.e.,
\begin{equation}\label{eq.psi_func}
q\left(\tfrac{\alpha^*}{M}\right) = \frac{\psi(\alpha^*)}{M^2} \quad \text{and} \quad \psi(\alpha^*) = P_2(v)\,(\alpha^*)^2 + P_1(v)\,\alpha^* + P_0(v),
\end{equation}
where
\begin{equation}\label{eq.P_funcs}
\begin{aligned}
    &P_2(v) = 2 M^2 (-1 + M^2) (-1 + n)^2v^2 -2 (1 + 2 M - 3 M^3 - M^4 + M^5) (-1 + n) n v \\
    &\quad\quad\quad\quad+ 2 (-1 + M)^2 (-1 + 2 M^2 + M^3) n^2, \\
    &P_1(v) = -(1 + M + 2 M^2 - 6 M^3 + M^4 + M^5) (-1 + n)^2 v^2 \\
    &\quad\quad\quad\quad + (3 + 11 M + 12 M^2 - 16 M^3 - 3 M^4 + M^5) (-1 + n) n v \\
    &\quad\quad\quad\quad + 2 (2 - 4 M - 3 M^2 + 5 M^3 + M^4 - M^5) n^2,\\
    \text{and} \quad &P_0(v) = 2 (-1 - 2 M - 4 M^2 + 2 M^3 + M^4) (-1 + n) nv -2 (1 - 2 M - 2 M^2 + 2 M^3 + M^4) n^2.
\end{aligned}
\end{equation}
Next, we are going to show that $q\left(\frac{\alpha^*}{M}\right) < 0$. From~\eqref{eq.psi_func}, we know $q\left(\frac{\alpha^*}{M}\right)$ shares the same sign as $\psi(\alpha^*)$. To analyze the sign of $\psi(\alpha^*)$ (see~\eqref{eq.psi_func}), we first focus on its leading coefficient $P_2(v)$.  
By~\eqref{eq.P_funcs}, we obtain
\begin{equation}\label{eq.P2_vals}
\begin{aligned}
P_2(0) &= 2(-1+M)^2(-1+2M^2+M^3)n^2 > 0, \\
\text{and}\quad
P_2\!\left(\tfrac{(M - 1)n}{(n - 1)M}\right) 
&= \frac{2(-1+M)^2 (1 + M + 3M^2 + 3M^3) n^2}{M} > 0,
\end{aligned}
\end{equation}
where the inequalities follows $(M,n)\in\R_{>1}\times\N_{>1}$. Thus, $P_2(v)$ is strictly positive at $v=0$ and $v=\tfrac{(M-1)n}{(n-1)M}$.  

Moreover, we notice from~\eqref{eq.P_funcs} that $P_2(v)$ is a strongly convex quadratic function of $v$. Therefore, to conclude $P_2(v) > 0$ for all $v\in\left[0, \tfrac{(M-1)n}{(n-1)M}\right]$, we only need to show that $P_2(v_{\min}) > 0$ when $v_{\min}\in\left[0, \tfrac{(M-1)n}{(n-1)M}\right]$, where $v_{\min}$ is the unique minimizer of the strongly convex quadratic function $P_2:\R\to\R$. 

By~\eqref{eq.P_funcs}, we directly have
\begin{equation}\label{eq.vmin}
v_{\min} = \frac{2(M^5 - M^4 - 3 M^3 + 2 M + 1) (n-1) n}{4M^2(M^2-1)(n-1)^2} = \frac{(M^3-M^2-2M-1)n}{2M^2(n-1)},
\end{equation}
which implies that $v_{\min} \in \left[0, \tfrac{(M-1)n}{(n-1)M}\right]$ only when the following two conditions hold simultaneously:
\begin{align}\label{L3.1.ieq.1}
M^3-M^2-2M-1 \;\geq\; 0 \quad &\text{(guaranteeing $v_{\min}\geq 0$)}, \\
\label{L3.1.ieq.2}
\text{and}\quad \quad -1 - 3M^2 + M^3 \;\leq\; 0 \quad &\text{(guaranteeing $v_{\min} \leq \tfrac{(M - 1)n}{(n - 1)M}$)}.
\end{align}

That is, \eqref{L3.1.ieq.1} specifies a range of $M$ for which $v_{\min} \geq 0$, and \eqref{L3.1.ieq.2} specifies a range of $M$ for which $v_{\min} \leq \tfrac{(M - 1)n}{(n - 1)M}$.  
Therefore, when $M$ lies in the overlap of these two ranges, $v_{\min} \in \left[0, \tfrac{(M-1)n}{(n-1)M}\right]$. Note that~\eqref{L3.1.ieq.1} and~\eqref{L3.1.ieq.2} both describe cubic functions of $M$. By applying the cubic root formula~\cite{MR945393}, we know that $v_{\min} \in \left[0, \tfrac{(M-1)n}{(n-1)M}\right]$ when
\begin{align*}
M \in \left[\tfrac{1}{3}
+ \left(\tfrac{47}{54} + \tfrac{\sqrt{93}}{18}\right)^{\!1/3}
+ \left(\tfrac{47}{54} - \tfrac{\sqrt{93}}{18}\right)^{\!1/3}, 1
+ \left(\tfrac{3+\sqrt{5}}{2}\right)^{\!1/3}
+ \left(\tfrac{3-\sqrt{5}}{2}\right)^{\!1/3}\right] \subset [2,4].
\end{align*}
In this situation, we further have from~\eqref{eq.P_funcs} and~\eqref{eq.vmin} that
\begin{align}\label{L3.1.ieq.3}
P_2(v_{\min}) 
= \frac{(M-1)(M+1)(-1 - 4 M - 2 M^2 - 10 M^3 + 3 M^4 + 6 M^5 - M^6)n^2}{2M^2}.
\end{align}
Notice that for any $M\in[2,4]$, 
\begin{align*}
    &-1 - 4 M - 2 M^2 - 10 M^3 + 3 M^4 + 6 M^5 - M^6 \\
    =& -M^5(M-4) + (M-2)(2M^4+7M^3+4M^2+6M+8)+15 > 0,
\end{align*}
which, together with~\eqref{L3.1.ieq.3}, further implies that $P_2(v_{\min}) > 0$ whenever $v_{\min} \in \left[0, \tfrac{(M-1)n}{(n-1)M}\right]$. Thus, $P_2(v) > 0$ for all $v \in \left[0, \tfrac{(M-1)n}{(n-1)M}\right]$.

Next, we focus on $P_0(v)$. It's known from~\eqref{eq.P_funcs} that $P_0(v)$ is an affine function of $v$. Meanwhile,
\begin{align*}
P_0(0) &= -2(1-2M-2M^2+2M^3+M^4)n^2 = -2n^2(M-1)(M^3 + 3M^2 + M - 1) < 0, \\
\text{and} \quad P_0\!\left(\tfrac{(M - 1)n}{(n - 1)M}\right) &= -\tfrac{2(-1 - 4M^2 + 4M^3 + M^4)n^2}{M} = -\frac{2n^2(M-1)}{M}\cdot(M^3 + 5M^2 + M + 1) < 0,
\end{align*}
where the inequalities are from $M\in\R_{>1}$. Thus, $P_0(v) < 0$ for all $v \in \left[0, \tfrac{(M-1)n}{(n-1)M}\right]$.

Since $P_2(v) > 0$ for all $v \in \left[0, \tfrac{(M-1)n}{(n-1)M}\right]$, from~\eqref{eq.psi_func}, we know the function $\psi:\R\to\R$ is strongly convex. 
Moreover, \eqref{eq.psi_func} also implies $\psi(0) = P_0(v) < 0$ for any $v \in \left[0, \tfrac{(M-1)n}{(n-1)M}\right]$. Next, we are going to show that $\psi(1) \leq 0$ as well. By~\eqref{eq.psi_func} and~\eqref{eq.P_funcs}, a direct calculation leads to, for any $v\in\left[0, \tfrac{(M-1)n}{(n-1)M}\right]$,
\begin{align*}
\psi(1)&= P_2(v) + P_1(v) + P_0(v) \\
&= (-1+M)(-1+n)\,v\left( n(-M^4 - 6M^2 - 2M + 1) + (n-1)v(-M^4 + 6M^2 + 2M + 1)\right) \\
&= (-1+M)(-1+n)\,v\left( (-M^4+1)(n + (n-1)v) - (6M^2 + 2M)(n-(n-1)v)\right) \leq 0,
\end{align*}
where the inequality comes from $(M,n,v)\in\R_{>1}\times\N_{>1}\times\left[0, \tfrac{(M-1)n}{(n-1)M}\right]$. Combining the fact of $\psi(0) < 0$, $\psi(1) \leq 0$, and $\psi:\R\to\R$ being strongly convex, we conclude $\psi(\alpha^*) < 0$ whenever $\alpha^*\in\left[\frac{1}{M},1\right)$. Moreover, by~\eqref{eq.psi_func}, it directly holds that $q\left(\frac{\alpha^*}{M}\right) = \frac{\psi(\alpha^*)}{M^2} < 0$. Since $q:\R\to\R$ is a quadratic function (from~\eqref{eq.stepsize.beta}) satisfying
\[
q\left(\tfrac{\alpha^*}{M}\right) < 0 \quad \text{and} \quad q\left(\tfrac{M+1}{2M}\right) > 0,
\]
where the second inequality is from~\eqref{eq.q_func_endpoint}, the intermediate value theorem ensures the existence of a unique
$\beta^* \in \left(\tfrac{\alpha^*}{M}, \tfrac{M+1}{2M}\right)$ satisfying $q(\beta^*) = 0$ (or equivalently, \eqref{eq.stepsize.beta}).

Combining both cases ($n=1$ and $n\in\N_{>1}$) discussed above, we complete the proof of the uniqueness of $\beta^*$.

Therefore, we conclude the statement.
\end{proof}

\subsection{Proof of Theorem~\ref{theo:dual_feasibility}}\label{sec.app_proof_2}

To start with, we introduce the next two lemmas that would be useful in the proof of Theorem~\ref{theo:dual_feasibility}.

\begin{lemma}\label{lem:lambda10}
For any parameters $(M,n,v)\in\R_{>1}\times\N\times\left[0, \tfrac{(M-1)n}{(n-1)M}\right]$, with the stepsize schedule $(\alpha^*,\beta^*)$ described by the statement of Theorem~\ref{lem:root-interval}, we have $r(\beta^*) \leq 0$, where $r:\R\to\R$ is a quadratic function defined as
\begin{equation}\label{eq.r_func}
\begin{aligned}
r(\beta) = \ &(2 M n \alpha^* + M v \alpha^* - (n-1) v \alpha^* - M n v \alpha^*) + 2 M (n + n (v-1) \alpha^* - v \alpha^* ) \beta \\
&\quad + M n (\alpha^* - 1 - M (1+\alpha^*)) \beta^2.
\end{aligned}
\end{equation}
\end{lemma}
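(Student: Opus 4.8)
The plan is to reduce the claimed inequality $r(\beta^*)\le 0$ to a polynomial inequality in $\alpha^*$ alone, by using the defining equations of $(\alpha^*,\beta^*)$ from Theorem~\ref{lem:root-interval} rather than bounding $r(\beta^*)$ directly; the cases $n=1$ and $n\in\N_{>1}$ are handled separately.

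For $n=1$ the parameter $v$ disappears from $r$ entirely, since every $v$-term in~\eqref{eq.r_func} carries a factor $n-1$, leaving $r(\beta)=2M\alpha^*+2M(1-\alpha^*)\beta-M\bigl((M-1)\alpha^*+M+1\bigr)\beta^2$. Substituting the closed forms~\eqref{eq:silver.alpha} and~\eqref{eq.silver.beta} and writing $s:=\sqrt{2M^2-2M+1}$, I would factor $r(\beta^*)=\frac{4M}{(1+s)(1+2M-s)^2}\bigl(4M^2-M+1-(3M+1)s\bigr)$. Since the prefactor is positive for $M>1$, it then suffices to verify $(3M+1)s\ge 4M^2-M+1$, which follows by squaring because $(3M+1)^2(2M^2-2M+1)-(4M^2-M+1)^2=2M(M-1)^2(M+3)\ge 0$.

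For $n\in\N_{>1}$ the argument has two parts. First, $r$ is a \emph{concave} quadratic in $\beta$, its leading coefficient being $-Mn\bigl((M-1)\alpha^*+M+1\bigr)<0$; moreover this leading coefficient is proportional to that of the quadratic $q$ in~\eqref{eq.q_func} with ratio $-1/D$, where $D:=(M-1)^2n+(M+1)^2(n-1)v>0$, so the combination $\ell(\beta):=r(\beta)+\tfrac1D q(\beta)$ is affine in $\beta$. A direct computation then gives the identity $\ell\bigl(\tfrac{M+1}{2M}\bigr)=0$, hence $\ell(\beta)=p\bigl(\beta-\tfrac{M+1}{2M}\bigr)$ where $p$ is the slope of $\ell$. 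Since $q(\beta^*)=0$ by Theorem~\ref{lem:root-interval}, we get $r(\beta^*)=\ell(\beta^*)=p\bigl(\beta^*-\tfrac{M+1}{2M}\bigr)$, and because $\beta^*<\tfrac{M+1}{2M}$ the bound $r(\beta^*)\le 0$ is equivalent to $p\ge 0$, equivalently to $\ell(0)\le 0$. As $D>0$, this is in turn the explicit inequality $G(\alpha^*):=D\bigl(2Mn-(M+1)(n-1)v\bigr)\alpha^*+q(0)\le 0$, where $q(0)$ is the $\beta$-independent part of~\eqref{eq.q_func} and $\bigl(2Mn-(M+1)(n-1)v\bigr)\alpha^*$ is the $\beta$-independent part of~\eqref{eq.r_func}; note $G$ is a quadratic polynomial in $\alpha^*$ with coefficients depending on $M,n,v$.

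The hard part is establishing $G(\alpha^*)\le 0$, and this is where I expect essentially all the work to be. It cannot be done by a pointwise bound over $\alpha^*\in\bigl[\tfrac1M,1\bigr)$, since $G$ is in fact positive for some admissible $\alpha^*$ near $1$; one must use that $\alpha^*$ is the specific root of the cubic~\eqref{eq.stepsize.alpha}. My plan is to exploit that $v$ enters~\eqref{eq.stepsize.alpha} linearly, so that $(M-1)(n-1)v\,\alpha^*$ equals $\Theta(\alpha^*):=(M-1)Mn(\alpha^*)^3-(M+1)(M-2)n(\alpha^*)^2-4n\alpha^*+2n$; substituting this relation (and its square, to handle the $v^2$ terms in $q(0)$) into $G(\alpha^*)$ and clearing the factor $(\alpha^*)^2$ reduces the claim to a single-variable polynomial inequality in $\alpha^*$ over the range $\bigl[\tfrac1M,\tfrac{2}{1+\sqrt{2M^2-2M+1}}\bigr]$ that $\alpha^*$ traverses as $v$ runs over $\bigl[0,\tfrac{(M-1)n}{(n-1)M}\bigr]$ (monotonicity of $v\mapsto\alpha^*$ follows from the implicit function theorem applied to~\eqref{eq.stepsize.alpha} together with the sign information on $h'$ established in the proof of Theorem~\ref{lem:root-interval}). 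I would then settle that final polynomial inequality by the same method as in the proof of Theorem~\ref{lem:root-interval}: determine the sign of its leading and lower coefficients as functions of $M$ and $n$, factor the values at the two endpoints of the interval, and conclude via a convexity/monotonicity argument on the relevant sub-interval. I expect this last polynomial bookkeeping, rather than any conceptual difficulty, to be the only real obstacle.
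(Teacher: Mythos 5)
Your reduction matches the paper's key structural observation exactly: the combination $\ell(\beta)=r(\beta)+\tfrac1D q(\beta)$ (the paper calls it $s$) is affine in $\beta$ and vanishes at $\beta=\tfrac{M+1}{2M}$, so with $q(\beta^*)=0$ one has $r(\beta^*)=\ell(\beta^*)$ and, since $\beta^*<\tfrac{M+1}{2M}$, the claim is equivalent to nonnegativity of the slope of $\ell$. Your $n=1$ verification via the closed forms~\eqref{eq:silver.alpha}--\eqref{eq.silver.beta} and squaring is correct and checks out (indeed $(3M+1)^2(2M^2-2M+1)-(4M^2-M+1)^2=2M(M-1)^2(M+3)$), although it is not needed as a separate case since the paper's argument is uniform in $n$.

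The gap is in the final step, where you diverge from the paper. The paper factors the slope of $\ell$ explicitly as $\tfrac{4M(-n+(n-1)v)}{D}\,t(\alpha^*)$ with $t$ a $v$-dependent quadratic in $\alpha$ (see~\eqref{eq.t_func}), reducing the claim to $t(\alpha^*)\le 0$. This is settled by showing $\alpha^*\le\bar\alpha$, where $\bar\alpha$ is the positive root of $t$, which in turn follows from a closed-form sign analysis: $h(\bar\alpha)=\tfrac{c_1\sqrt{c_2}+c_3}{2(M-1)^2Mn^2}$ with $c_1,c_2\ge 0$, $c_3\le 0$, and the nontrivial identity $c_1^2c_2-c_3^2=4M(M-1)^2n^3\bigl(M(n-1)v-n(M-1)\bigr)\bigl((M+1)^2(n-1)v+(M-1)^2n\bigr)^2\le 0$. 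You instead propose to prove $\ell(0)\le 0$ directly by eliminating $v$ from $G(\alpha^*)=D\,r(0)+q(0)$ via the cubic~\eqref{eq.stepsize.alpha}, obtaining a single-variable polynomial inequality on $\alpha^*\in\bigl[\tfrac1M,\,\tfrac{2}{1+\sqrt{2M^2-2M+1}}\bigr]$. This is a genuinely different plan, and it is not carried out. Note that $q(0)$ contains $v^2$ terms, so the substitution $v=\Theta(\alpha^*)/\bigl((M-1)(n-1)\alpha^*\bigr)$ (with $\Theta$ cubic) produces, after clearing $(\alpha^*)^2$, a polynomial of roughly degree seven in $\alpha^*$ with coefficients in $M,n$. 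Asserting that this can be settled ``by the same method as in the proof of Theorem~\ref{lem:root-interval}'' (sign of coefficients, endpoint factorization, convexity/monotonicity) is wishful: that method is tailored to cubics and quadratics, and there is no evidence a degree-seven polynomial obtained this way has the needed structure. Until that final inequality is actually established, the proposal leaves the hard part---which you yourself flag as where ``essentially all the work'' lies---undone. To close the gap you should either complete the promised polynomial argument and verify it, or switch to the paper's route of factoring through $t(\alpha^*)$ and bounding $\alpha^*$ by the root $\bar\alpha$, which keeps $v$ explicit and reduces everything to a tractable sign identity.
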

\begin{proof}[Proof of Lemma~\ref{lem:lambda10}]
Combining~\eqref{eq.q_func} and~\eqref{eq.r_func}, we first define an affine function $s:\R\to\R$ as
\begin{equation}\label{eq.s_func}
\begin{aligned}
s(\beta) := \ &r(\beta) + \frac{q(\beta)}{(-1 + M)^2 n + (1 + M)^2 (-1 + n) v} \\
= \ &(2M\beta - (M+1))\cdot \frac{2(-n + (n-1)v)}{(-1 + M)^2 n + (1 + M)^2 (-1 + n) v} \cdot t(\alpha^*),
\end{aligned}
\end{equation}
where $t:\R\to\R$ is a quadratic function such as
\begin{equation}\label{eq.t_func}
    t(\alpha) := (-1 + M) M n \alpha^2 + \bigl(-M (1 + M) v + n (2 + M v + M^2 v)\bigr)\alpha - (1 + M) n.
\end{equation}
Our goal is to show that $r(\beta^*) \le 0$. Since $q(\beta^*) = 0$ (by~\eqref{eq.stepsize.beta} and~\eqref{eq.q_func}), from~\eqref{eq.s_func} and the fact of $(M,n,v)\in\R_{>1}\times\N\times\left[0, \tfrac{(M-1)n}{(n-1)M}\right]$ and $\beta^*\in\left(\frac{\alpha^*}{M}, \frac{M+1}{2M}\right)$ (by Theorem~\ref{lem:root-interval}), it is sufficient to prove $t(\alpha^*) \leq 0$. 

By~\eqref{eq.t_func} and $(M,n)\in\R_{>1}\times\N$, the quadratic function $t:\R\to\R$ is strongly convex and the equation $t(\alpha) = 0$ has exactly one positive solution, which is denoted as $\bar\alpha$ (i.e., $t(\bar\alpha) = 0$ and $\bar\alpha\in\R_{>0}$). By using the quadratic formula, we have
\begin{equation}\label{eq.bar_alpha}
\bar\alpha = \frac{M (1 + M) v - n (2 + M v + M^2 v) + \sqrt{(-M (1 + M) v + n (2 + M v + M^2 v))^2 + 4(-1 + M)(1+M) M n^2}}{2(-1 + M) M n}.
\end{equation}

Moreover, it follows~\eqref{eq.t_func} and $(M,n,v)\in\R_{>1}\times\N\times\left[0, \tfrac{(M-1)n}{(n-1)M}\right]$ that
\begin{equation}\label{eq.t_func_values}
\begin{aligned}
t(0) &= -(1+M)n < 0, \\
t(1) &= n(M - 1)^2 + (n-1)(1 + M) M v > 0, \\
\text{and}\quad t\left(\tfrac{1}{M}\right) &= \frac{M+1}{M}\Bigl( (1 - M)n + (n-1)M v \Bigr) \leq 0.
\end{aligned}
\end{equation}
Because $t:\R\to\R$ is a strongly convex quadratic function, it directly follows from~\eqref{eq.t_func_values} that 
\begin{equation}\label{eq.bar_alpha_property}
\bar{\alpha} \in \left[\tfrac{1}{M},1\right).
\end{equation}
To conclude $t(\alpha^*) \leq 0$, in the rest of this proof, we are going to show that $\alpha^* \in \left[\frac{1}{M},\bar\alpha\right)$. By Theorem~\ref{lem:root-interval}, $\alpha^*\in\left[\frac{1}{M},1\right)$ satisfies $h(\alpha^*) = 0$, where function $h:\R\to\R$ is defined in~\eqref{eq:h_func}. In addition, after combining~\eqref{eq:h_func} and~\eqref{eq.bar_alpha}, we have 
\begin{equation}\label{eq.lm32.subs_polished}
    h(\bar{\alpha}) = \frac{c_1\sqrt{c_2} + c_3}{2 (-1 + M)^2 M n^2},
\end{equation}
where $\{c_1, c_2, c_3\}\subset\R$ are 
\begin{align*}
    c_1 &= M (1 + M)^2 (n-1)^2 v^2 + (1 + 3 M - M^2 + M^3) (n-1) n v + (M-1)^2 n^2, \\
    c_2 &= M^2 (1 + M)^2 (n-1)^2 v^2 + 4 M (1 + M) (n-1) n v + 4 (1 - M + M^3) n^2, \\
    \text{and} \quad c_3 &= -M^2 (1 + M)^3 (n-1)^3 v^3 -M (1+M)(3+5M-M^2+M^3) (n-1)^2 n v^2 \\
    &\quad - (2 + 5 M - 5 M^2 + 3 M^3 + 3 M^4) (n-1) n^2 v - 2 (M-1)^2 (1 - M + M^2) n^3,
\end{align*}
respectively. Since $(M,n,v)\in\R_{>1}\times\N\times\left[0, \tfrac{(M-1)n}{(n-1)M}\right]$, every single term of $c_1$ and $c_2$ is non-negative, which implies $c_1\ge0$ and $c_2\ge0$. Meanwhile, from the same logic, every single term of $c_3$ is non-positive and thus $c_3\le0$. Moreover, because $h(\bar{\alpha})$ and $c_1^2 c_2 - c_3^2$ must be both non-positive or non-negative and a calculation process shows that for $(M,n,v)\in\R_{>1}\times\N\times\left[0, \tfrac{(M-1)n}{(n-1)M}\right]$,
\begin{align*}
    c_1^2c_2 - c_3^2 &= 4M(M-1)^2n^3\cdot\left(M(n-1)v - n(M-1)\right)\cdot\left((M+1)^2(n-1)v + (M-1)^2n\right)^2 \le 0,
\end{align*}
we conclude that $h(\bar\alpha) \leq 0$. From~\eqref{eq.bar_alpha_property}, $h(\bar\alpha) \leq 0$, $h\left(\tfrac{1}{M}\right) \geq 0$ (see~\eqref{eq:h_endpoint_val}), the existence of a unique $\alpha^*\in\left[\frac{1}{M},1\right)$ satisfying $h(\alpha^*) = 0$ (see Theorem~\ref{lem:root-interval}), and the intermediate value theorem, we know $\alpha^*\in \left[\tfrac{1}{M},\bar\alpha\right]$. Furthermore, it follows the strong convexity of function $t:\R\to\R$ (defined in~\eqref{eq.t_func}) and $\alpha^*\in \left[\tfrac{1}{M},\bar\alpha\right]\subset[0,\bar\alpha]$ that $t(\alpha^*) \leq \max\{t(0),t(\bar\alpha)\} = 0$. In the end, taking $\beta = \beta^*$ in~\eqref{eq.s_func} demonstrates that
\begin{align*}
r(\beta^*) = \ &r(\beta^*) + \frac{q(\beta^*)}{(-1 + M)^2 n + (1 + M)^2 (-1 + n) v} \\
= \ &(2M\beta^* - (M+1))\cdot \frac{2(-n + (n-1)v)}{(-1 + M)^2 n + (1 + M)^2 (-1 + n) v} \cdot t(\alpha^*) \leq 0,
\end{align*}
where the first equality is from $q(\beta^*) = 0$ (by~\eqref{eq.stepsize.beta} and~\eqref{eq.q_func}), the second equality follows~\eqref{eq.s_func}, and the inequality uses $\beta^* < \frac{M+1}{2M}$ (see Theorem~\ref{lem:root-interval}), $(M,n,v)\in\R_{>1}\times\N\times\left[0, \tfrac{(M-1)n}{(n-1)M}\right]$, and the fact of $t(\alpha^*)\leq 0$. Therefore, the proof is completed.
\end{proof}

The function $r(\beta)$ defined in~\eqref{eq.r_func} plays a key role in demonstrating the non-negativity of the multipliers $\lambda_{i,0}$ and $\lambda_{0,i}$ for all $i\in\Omega\backslash\{0,*\}$, as introduced in~\eqref{eq.lambda_0i}--\eqref{eq.lambda_i0}. In addition, our next lemma constructs a matrix with a carefully designed structure, which is then equating the left-hand-side matrix appearing in the SDP constraint~\eqref{eq.dual.sdpconstraint}, yielding a system of algebraic equations whose solution uniquely determines the dual variables as well as the two-step stepsize schedule $(\alpha^*,\beta^*)$. The structure of the constructed matrix is inspired by a generalization of the deterministic two-step silver stepsize schedule~\cite{altschuler2025acceleration}. In the deterministic setting (i.e., $n=1$), our structured matrix simplifies to an all-zeros matrix, thus recovering the classical two-step silver stepsize schedule. In contrast, in the stochastic setting (i.e., $n\in\mathbb{N}_{>1}$), the matrix depends on the choice of $v$, leading to distinct feasible dual variables and our proposed two-step stepsize schedules.

\begin{lemma}\label{lem:semidef}
With predetermined parameters $(M,n,v)\in\R_{>1}\times\N\times\left[0, \tfrac{(M-1)n}{(n-1)M}\right]$, for any $(z,\delta_1,\delta_2)\in\R_{>0}\times\R\times\R$, let's consider a symmetric matrix
\begin{equation}\label{eq.Lambda_def}
\Lambda = \begin{bmatrix} A & B \\ B^{\top} & C \end{bmatrix} \in\R^{(n+n^2)\times(n+n^2)},
\end{equation}
where $A = (n \delta_1 + (n-1)nvz)I_n - \delta_1J_n \in \R^{n\times n}$, $B\in\R^{n\times n^2}$ follows the structure that for any $(i,j)\in[n]\times[n^2]$, its $i$th-row-$j$th-column element can be represented as
\begin{equation*}
[B]_{(i,j)} = \begin{cases} (n-1)z &\text{if } i = \left\lceil \tfrac{j}{n} \right\rceil \\ -z &\text{otherwise,} \end{cases}
\end{equation*}
and $C = C_0^{\oplus n} \in\R^{n^2\times n^2}$ with $C_0 = (n\delta_2 + (n-1)vz)I_n - \delta_2J_n$, then the following statements hold true.
\begin{itemize}
\item[(a)] If $n=1$, then $\Lambda\in\R^{(n+n^2)\times(n+n^2)}$ defined in~\eqref{eq.Lambda_def} is positive semi-definite for any $(\delta_1,\delta_2)\in\R\times\R$.
\item[(b)] If $n\in\N_{>1}$, then $\Lambda\in\R^{(n+n^2)\times(n+n^2)}$ defined in~\eqref{eq.Lambda_def} is positive semi-definite if
\begin{equation}\label{ieq.sdp_cond}
\delta_1  \geq \frac{n^2z - (n-1)^2v^2z}{(n-1)v}\footnote{By convention, when $v=0$, we have $\frac{n^2z - (n-1)^2v^2z}{(n-1)v} = \frac{n^2z}{(n-1)v} = +\infty$ for any $(n,z)\in\N_{>1}\times\R_{>0}$.} \quad \text{and} \quad \delta_2 \geq -\frac{(n-1)}{n}vz.
\end{equation}
\end{itemize}
\end{lemma}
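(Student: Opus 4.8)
The plan is to show that positive semi-definiteness of the $(n+n^2)\times(n+n^2)$ matrix $\Lambda$ collapses to that of a few $1\times1$ and $2\times2$ matrices, because every block of $\Lambda$ is built from $I_n$ and $J_n$ and is therefore simultaneously block-diagonalized by the orthogonal projections $P_1:=\tfrac1n J_n$ (onto $\mathrm{span}(\mathbf 1_n)$) and $P_0:=I_n-P_1$ (onto $\mathbf 1_n^\perp$). Part (a) is immediate: when $n=1$ the definitions give $A=\delta_1-\delta_1=0$, $B=(1-1)z=0$, and $C_0=\delta_2-\delta_2=0$, so $\Lambda=\mathbf 0\succeq\mathbf 0$ with no restriction on $(\delta_1,\delta_2)$.

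For part (b) I would first dispose of the case $v=0$: by the footnote convention the stated lower bound on $\delta_1$ is then $+\infty$, so the hypothesis is vacuous and there is nothing to prove; hence one may assume $v>0$, and together with $z>0$ and $n\ge 2$ this makes $(n-1)vz>0$. Next rewrite the blocks as $A=(n\delta_1+(n-1)nvz)P_0+(n-1)nvz\,P_1$, $C=I_n\otimes C_0$ with $C_0=(n\delta_2+(n-1)vz)P_0+(n-1)vz\,P_1$, and, after identifying $\R^{n^2}$ with $\R^n\otimes\R^n$ (first factor indexing the diagonal block of $C$, second factor the within-block coordinate), $B=nz\,(P_0\otimes\mathbf 1_n^\top)$; this last identity is just the observation that $[B]_{i,(k,\ell)}=z[\,nI_n-J_n\,]_{ik}$ does not depend on $\ell$.

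Then decompose $\R^n\oplus\R^{n^2}=\bigl(\{0\}\oplus(\R^n\otimes\mathrm{range}\,P_0)\bigr)\oplus\bigl(\R^n\oplus(\R^n\otimes\mathrm{range}\,P_1)\bigr)$. Since $\mathbf 1_n^\top$ annihilates $\mathrm{range}\,P_0=\mathbf 1_n^\perp$, the block $B$ kills $\R^n\otimes\mathrm{range}\,P_0$ while $B^\top$ lands in $\R^n\otimes\mathrm{range}\,P_1$, so both summands are $\Lambda$-invariant. On the first, $\Lambda$ acts as $(n\delta_2+(n-1)vz)I$, which is $\succeq\mathbf 0$ exactly under the hypothesis $\delta_2\ge-\tfrac{(n-1)}{n}vz$. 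On the second $2n$-dimensional piece, using the orthonormal identification $u\otimes\tfrac{\mathbf 1_n}{\sqrt n}\leftrightarrow u$, $\Lambda$ becomes $\begin{bmatrix}A & n^{3/2}z\,P_0\\ n^{3/2}z\,P_0 & (n-1)vz\,I_n\end{bmatrix}$; splitting this once more along $P_0$ and $P_1$ (which commute with everything here) block-diagonalizes it into $\begin{bmatrix}(n-1)nvz & 0\\ 0 & (n-1)vz\end{bmatrix}$ on the $P_1$-part (clearly PSD) and $\begin{bmatrix}n\delta_1+(n-1)nvz & n^{3/2}z\\ n^{3/2}z & (n-1)vz\end{bmatrix}$ on the $P_0$-part. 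Because $(n-1)vz>0$, this last matrix is PSD iff its determinant is nonnegative, i.e.\ $(n\delta_1+(n-1)nvz)(n-1)vz\ge n^3z^2$, which rearranges precisely to $\delta_1\ge\tfrac{n^2z-(n-1)^2v^2z}{(n-1)v}$ (and then forces its $(1,1)$ entry to be positive). Assembling the pieces yields $\Lambda\succeq\mathbf 0$.

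I expect the main difficulty to be bookkeeping rather than analysis: pinning down the Kronecker/block indexing of $B$ and $C$ and checking that all four blocks are genuinely simultaneously block-diagonalized, so that the invariant-subspace reduction is legitimate. A related subtlety is that $C$ is singular when $\delta_2$ equals its lower bound, which blocks a naive Schur-complement argument; the decomposition above sidesteps this, but an equivalent route is the generalized Schur complement: the range inclusion $\mathrm{range}(B^\top)\subseteq\mathrm{range}(C)$ holds because $(n-1)vz>0$ puts $\mathrm{span}(\mathbf 1_n)$ inside $\mathrm{range}(C_0)$, one computes $BC^{+}B^{\top}=\tfrac{n^3z}{(n-1)v}P_0$, and $A-BC^{+}B^{\top}=\bigl(n\delta_1+(n-1)nvz-\tfrac{n^3z}{(n-1)v}\bigr)P_0+(n-1)nvz\,P_1\succeq\mathbf 0$ gives the same two inequalities.
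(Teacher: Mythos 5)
Your proof is correct and follows essentially the same approach as the paper's: both reduce positive semi-definiteness to small blocks by exploiting the simultaneous diagonalizability of $I_n$ and $J_n$ together with the Kronecker structure of $B$ and $C$. Where the paper conjugates explicitly by $S = I_n \oplus (\Xi^{\oplus n}\Gamma^\top)$ with $\Xi = [\mathbf{1}_n/\sqrt n \ \ W]$ and a permutation $\Gamma$, you phrase the same orthogonal decomposition via the spectral projections $P_0, P_1$ of $J_n$ and the identification $\R^{n^2}\cong\R^n\otimes\R^n$, arriving at the identical $1\times1$ block $(n\delta_2+(n-1)vz)$ of multiplicity $n(n-1)$, the identical $2\times2$ block $\bigl(\begin{smallmatrix}(n-1)nvz & 0\\ 0 & (n-1)vz\end{smallmatrix}\bigr)$, and the identical $2\times2$ block $\bigl(\begin{smallmatrix}n\delta_1+(n-1)nvz & n^{3/2}z\\ n^{3/2}z & (n-1)vz\end{smallmatrix}\bigr)$ of multiplicity $n-1$, whose determinant condition is exactly~\eqref{ieq.sdp_cond}; you also make explicit the vacuity at $v=0$, which the paper leaves implicit.
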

\begin{proof}[Proof of Lemma~\ref{lem:semidef}]
When $n=1$, from~\eqref{eq.Lambda_def} and the definitions of sub-matrices $(A,B,C)\in\R^{n\times n}\times\R^{n\times n^2}\times\R^{n^2\times n^2}$ described in the lemma statement, the matrix $\Lambda\in\R^{(n+n^2)\times(n+n^2)}$ is actually an all-zero matrix, i.e., $\Lambda = \mathbf{0}$. Therefore, for any $(\delta_1,\delta_2)\in\R\times\R$, the matrix $\Lambda$ is positive semi-definite. 

On the other hand, when $n\in\N_{>1}$, let's consider a matrix $W\in\R^{n\times (n-1)}$, whose columns consist of an orthonormal basis, satisfying 
\begin{equation}\label{eq.W_def}
W^\top W=I_{n-1} \quad \text{and} \quad W^\top \mathbf{1}_n = \mathbf{0},
\end{equation}
and we further define
\begin{equation}\label{eq.Xi_def}
\Xi := \begin{bmatrix} \tfrac{\mathbf{1}_n}{\sqrt{n}} & W \end{bmatrix} \in\R^{n\times n},
\end{equation}
where it holds that 
\begin{equation}\label{eq.Xi_orthogonal}
\Xi^\top\Xi = \begin{bmatrix} \tfrac{\mathbf{1}_n}{\sqrt{n}} & W \end{bmatrix}^\top \begin{bmatrix} \tfrac{\mathbf{1}_n}{\sqrt{n}} & W \end{bmatrix} = \begin{bmatrix} 1 & \mathbf{0} \\ \mathbf{0} & I_{n-1} \end{bmatrix} = I_n.
\end{equation}
Because $\Xi\in\R^{n\times n}$ is a square matrix, \eqref{eq.Xi_orthogonal} directly implies $\Xi^\top = \Xi^{-1}$ and $\Xi\Xi^\top = I_n$. Hence, $\Xi$ is an orthogonal matrix.

In the rest of this proof, for any $i\in[n^2]$, we denote $e_i\in\R^{n^2}$ as a vector whose $i$th element is one while all other elements are zero. Then we can define a permutation matrix $\Gamma\in\R^{n^2\times n^2}$ such that for any $i\in[n^2]$,
\begin{equation}\label{eq.Gamma_def}
\Gamma e_i = e_{\pi(i)} \quad \text{with} \quad \pi(i) = \begin{cases} (n-1)n + \left\lceil \tfrac{i}{n} \right\rceil &\text{if }\text{mod}(i-1,n) = 0 \\ \left\lfloor \tfrac{i-1}{n} \right\rfloor \cdot (n-1) + \text{mod}(i-1,n) &\text{if }\text{mod}(i-1,n) \neq 0.\end{cases}
\end{equation}
From~\eqref{eq.Gamma_def}, the permutation matrix $\Gamma\in\R^{n^2\times n^2}$ has the property that for any $i\in[n^2]$, its $\pi(i)$th-row-$i$th-column element equals to one while all other elements equal to zero. Meanwhile, because $\pi:[n^2]\to[n^2]$ is a bijection, the permutation matrix $\Gamma$ is an orthogonal matrix, i.e., $\Gamma^\top\Gamma = \Gamma\Gamma^\top = I_{n^2}$. 

Finally, let's define matrix $S\in\R^{(n+n^2)\times(n+n^2)}$ as
\begin{equation}\label{eq.S_def}
S := I_n \oplus (\Xi^{\oplus n}\Gamma^\top).
\end{equation}
By the orthogonality of matrices $\Xi\in\R^{n\times n}$ and $\Gamma\in\R^{n^2\times n^2}$, we further have 
\begin{equation*}
S^\top S = \begin{bmatrix} I_n & \mathbf{0} \\ \mathbf{0} & \Gamma(\Xi^{\top})^{\oplus n} \end{bmatrix} \begin{bmatrix} I_n & \mathbf{0} \\ \mathbf{0} & \Xi^{\oplus n}\Gamma^\top \end{bmatrix} = \begin{bmatrix} I_n & \mathbf{0} \\ \mathbf{0} & \Gamma(\Xi^\top\Xi)^{\oplus n}\Gamma^\top \end{bmatrix} = I_{n+n^2},
\end{equation*}
which demonstrates the orthogonality of $S$. Combining~\eqref{eq.Lambda_def} and~\eqref{eq.S_def}, we focus on analyzing the transformation matrix
\begin{equation}\label{eq.Lambda_transformation}
S^\top\Lambda S = \begin{bmatrix} I_n & \mathbf{0} \\ \mathbf{0} & \Gamma(\Xi^{\top})^{\oplus n} \end{bmatrix} \begin{bmatrix} A & B \\ B^\top & C \end{bmatrix} \begin{bmatrix} I_n & \mathbf{0} \\ \mathbf{0} & \Xi^{\oplus n}\Gamma^\top \end{bmatrix} = \begin{bmatrix} A & B\Xi^{\oplus n}\Gamma^\top \\ \Gamma(\Xi^\top)^{\oplus n}B^\top & \Gamma(\Xi^\top)^{\oplus n} C \Xi^{\oplus n}\Gamma^\top \end{bmatrix}.
\end{equation}
To start with, the $(2,2)$-block of the transformation matrix in~\eqref{eq.Lambda_transformation} is
\begin{equation}\label{eq.transformation_22}
\begin{aligned}
&\Gamma(\Xi^\top)^{\oplus n} C \Xi^{\oplus n}\Gamma^\top = \Gamma(\Xi^\top)^{\oplus n} C_0^{\oplus n} \Xi^{\oplus n}\Gamma^\top = \Gamma (\Xi^\top C_0 \Xi)^{\oplus n}\Gamma^\top \\
= \ &\Gamma \left(\Xi^\top ((n\delta_2 + (n-1)vz)I_n - \delta_2J_n) \Xi \right)^{\oplus n} \Gamma^\top = \Gamma \left( (n\delta_2 + (n-1)vz)\Xi^\top\Xi - \delta_2\Xi^\top J_n\Xi \right)^{\oplus n}\Gamma^\top \\
= \ &\Gamma \left( (n\delta_2 + (n-1)vz)I_n -  \begin{bmatrix} n\delta_2 & \mathbf{0} \\ \mathbf{0} & \mathbf{0} \end{bmatrix} \right)^{\oplus n}\Gamma^\top = \Gamma \left( \begin{bmatrix} (n-1)vz & \mathbf{0} \\ \mathbf{0} & (n\delta_2 + (n-1)vz)I_{n-1} \end{bmatrix} \right)^{\oplus n}\Gamma^\top \\
= \ &\begin{bmatrix} (n\delta_2 + (n-1)vz)I_{n(n-1)} & \mathbf{0} \\ \mathbf{0} & (n-1)vz I_n \end{bmatrix},
\end{aligned}
\end{equation}
where the first and third equalities follow definitions of matrices $C\in\R^{n^2\times n^2}$ and $C_0\in\R^{n\times n}$ in the lemma statement, the fourth equality follows the definition of $J_n$ and \eqref{eq.W_def}, the fifth equality uses~\eqref{eq.W_def}--\eqref{eq.Xi_orthogonal}, and the last equality relies on the definition of permutation matrix $\Gamma\in\R^{n^2\times n^2}$; see~\eqref{eq.Gamma_def}.

Then let's consider the $(1,2)$-block of the transformation matrix in~\eqref{eq.Lambda_transformation}, which is
\begin{equation}\label{eq.transformation_12}
\begin{aligned}
B\Xi^{\oplus n}\Gamma^\top = \ &\left((nz\mathbf{1}_n^\top)^{\oplus n} - z\mathbf{1}_{n\times n^2}\right)\Xi^{\oplus n}\Gamma^\top = \left((nz\mathbf{1}_n^\top\Xi)^{\oplus n} - zJ_n\underbrace{\begin{bmatrix} \Xi & \Xi & \cdots & \Xi \end{bmatrix}}_{n \text{ copies}}\right)\Gamma^\top \\
= \ &\left(\left(nz\begin{bmatrix} \sqrt{n} & \mathbf{0} \end{bmatrix}\right)^{\oplus n} - z\mathbf{1}_n\begin{bmatrix} \sqrt{n} & \mathbf{0} \end{bmatrix}\underbrace{\begin{bmatrix} I_n & I_n & \cdots & I_n \end{bmatrix}}_{n \text{ copies}}\right)\Gamma^\top \\
= \ &\left(\Gamma\left(\begin{bmatrix} nz\sqrt{n} \\ \mathbf{0} \end{bmatrix}\right)^{\oplus n} - \Gamma{\underbrace{\begin{bmatrix} I_n & I_n & \cdots & I_n \end{bmatrix}}_{n \text{ copies}}}^\top \begin{bmatrix} z\sqrt{n} \\ \mathbf{0} \end{bmatrix}\mathbf{1}_n^\top \right)^\top \\
= \ &\left( \begin{bmatrix} \mathbf{0} \\ nz\sqrt{n}I_n \end{bmatrix} - \begin{bmatrix} \mathbf{0} \\ z\sqrt{n} J_n \end{bmatrix} \right)^\top = \begin{bmatrix} \mathbf{0} & z\sqrt{n}(nI_n - J_n) \end{bmatrix},
\end{aligned}
\end{equation}
where the first equality follows the definition of $B\in\R^{n\times n^2}$ in the lemma statement, the third equality relies on~\eqref{eq.W_def}--\eqref{eq.Xi_orthogonal}, and the second last equality is from the definition of $\Gamma\in\R^{n^2\times n^2}$ in~\eqref{eq.Gamma_def}.

Combining~\eqref{eq.Lambda_def}, the orthogonality of $S\in\R^{(n+n^2)\times(n+n^2)}$ (defined in~\eqref{eq.S_def}), and~\eqref{eq.Lambda_transformation}--\eqref{eq.transformation_12}, we have matrix $\Lambda\in\R^{(n+n^2)\times(n+n^2)}$ shares same eigenvalues with
\begin{equation*}
S^\top\Lambda S = \begin{bmatrix} A & \mathbf{0} & z\sqrt{n}(nI_n - J_n) \\ \mathbf{0} & (n\delta_2 + (n-1)vz)I_{(n-1)n} & \mathbf{0} \\ z\sqrt{n}(nI_n - J_n) & \mathbf{0} & (n-1)vz I_n \end{bmatrix},
\end{equation*}
which further has the same eigenvalues as
\begin{align}
&\begin{bmatrix} (n\delta_2 + (n-1)vz)I_{(n-1)n} & \mathbf{0} & \mathbf{0} \\ \mathbf{0} & A & z\sqrt{n}(nI_n - J_n) \\ \mathbf{0} & z\sqrt{n}(nI_n - J_n) & (n-1)vz I_n \end{bmatrix} \nonumber{}  \\
& \quad \quad  = (n\delta_2 + (n-1)vz)I_{(n-1)n} \oplus \begin{bmatrix} A & z\sqrt{n}(nI_n - J_n) \\ z\sqrt{n}(nI_n - J_n) & (n-1)vz I_n \end{bmatrix}. \label{eq.eigenvalue_transformation}
\end{align}
Under the condition~\eqref{ieq.sdp_cond}, it is guaranteed that $(n\delta_2 + (n-1)vz)I_{(n-1)n} \succeq \mathbf{0}$. Therefore, from~\eqref{eq.eigenvalue_transformation}, we only need to show $\begin{bmatrix} A & z\sqrt{n}(nI_n - J_n) \\ z\sqrt{n}(nI_n - J_n) & (n-1)vz I_n \end{bmatrix} \succeq \mathbf{0}$ to conclude the statement. It follows the orthogonality of matrix $\Xi\in\R^{n\times n}$ (see~\eqref{eq.Xi_def}--\eqref{eq.Xi_orthogonal}) that $\begin{bmatrix} A & z\sqrt{n}(nI_n - J_n) \\ z\sqrt{n}(nI_n - J_n) & (n-1)vz I_n \end{bmatrix}$ shares the same eigenvalues as
\begin{align}
&\begin{bmatrix} \Xi^\top & \mathbf{0} \\ \mathbf{0} & \Xi^\top \end{bmatrix}
\begin{bmatrix} A & z\sqrt{n}(nI_n - J_n) \\ z\sqrt{n}(nI_n - J_n) & (n-1)vz I_n \end{bmatrix}\begin{bmatrix} \Xi & \mathbf{0} \\ \mathbf{0} & \Xi \end{bmatrix} \nonumber{} \\
= \ &\begin{bmatrix} \Xi^\top A\Xi & z\sqrt{n}(n\Xi^\top I_n\Xi - \Xi^\top J_n\Xi) \\ z\sqrt{n}(n\Xi^\top I_n\Xi - \Xi^\top J_n\Xi) & (n-1)vz\Xi^\top I_n\Xi \end{bmatrix} \nonumber{} \\
= \ &\begin{bmatrix} (n \delta_1 + (n-1)nvz)\Xi^\top\Xi - \delta_1\Xi^\top\mathbf{1}_n\mathbf{1}_n^\top\Xi  & zn\sqrt{n}\Xi^\top\Xi - z\sqrt{n} \Xi^\top \mathbf{1}_n\mathbf{1}_n^\top\Xi \\ zn\sqrt{n}\Xi^\top\Xi - z\sqrt{n} \Xi^\top \mathbf{1}_n\mathbf{1}_n^\top\Xi & (n-1)vz\Xi^\top\Xi \end{bmatrix} \nonumber{} \\
= \ &\begin{bmatrix} (n-1)nvz \oplus (n \delta_1 + (n-1)nvz)I_{n-1} & 0 \oplus zn\sqrt{n}I_{n-1} \\ 0 \oplus zn\sqrt{n}I_{n-1} & (n-1)vzI_n  \end{bmatrix}, \label{eq.transformation_matrix_end}
\end{align}
where the second equality follows the definition of matrix $A\in\R^{n\times n}$ in the original statement and $J_n = \mathbf{1}_n\mathbf{1}_n^\top$, and the last equality uses~\eqref{eq.W_def}--\eqref{eq.Xi_orthogonal}. Moreover, due to the structure of the matrix in~\eqref{eq.transformation_matrix_end}, $\begin{bmatrix} A & z\sqrt{n}(nI_n - J_n) \\ z\sqrt{n}(nI_n - J_n) & (n-1)vz I_n \end{bmatrix}$ further shares the same eigenvalues as the matrix
\begin{equation}\label{eq.matrix_eigenvalue_equi}
\begin{bmatrix}
    (n-1)nvz & 0 \\
    0 & (n-1)vz
\end{bmatrix} \oplus \left(\begin{bmatrix} n\delta_1 + (n-1)nvz & zn\sqrt{n} \\ zn\sqrt{n} & (n-1)vz \end{bmatrix}\right)^{\oplus (n-1)}.
\end{equation}
With $(n,v,z)\in\N_{>1}\times\left[0, \tfrac{(M-1)n}{(n-1)M}\right]\times\R_{>0}$, the matrix in~\eqref{eq.matrix_eigenvalue_equi} is positive semi-definite when 
\begin{equation*}
\begin{bmatrix} n\delta_1 + (n-1)nvz & zn\sqrt{n} \\ zn\sqrt{n} & (n-1)vz \end{bmatrix}\succeq \mathbf{0},
\end{equation*}
which happens if and only if
\begin{equation}\label{eq.psd_last_condition}
n\delta_1 + (n-1)nvz \geq 0 \quad\text{and}\quad (n\delta_1 + (n-1)nvz)(n-1)vz - z^2n^3 \geq 0.
\end{equation}
Under the condition~\eqref{ieq.sdp_cond}, \eqref{eq.psd_last_condition} always holds. Therefore, when $n\in\N_{>1}$, \eqref{ieq.sdp_cond} is sufficient for $\Lambda\in\R^{(n+n^2)\times(n+n^2)}$ defined in~\eqref{eq.Lambda_def} to be positive semi-definite. The proof is completed.
\end{proof}

Now we are ready to present the proof of Theorem~\ref{theo:dual_feasibility}.

\begin{proof}[Proof of Theorem~\ref{theo:dual_feasibility}]
Under the parameter setting described in the theorem statement, we prove statements~(i)--(iii) as follows.

\noindent \textbf{Statement (i).} By Theorem~\ref{lem:root-interval}, $\alpha^*\in\left[\frac{1}{M},1\right)$ and $\beta^*\in\left(\frac{\alpha^*}{M},\frac{M+1}{2M}\right)\subseteq\left(\frac{1}{M^2},\frac{M+1}{2M}\right)$, which ensures that for any $i\in\Omega\backslash\{0,*\}$, $\lambda_{i,*}$ and $\lambda_{*,i}$ defined in~\eqref{eq.lambda_i*}--\eqref{eq.lambda_*i} satisfy
\begin{align*}
\lambda_{i,*} &= \frac{\alpha^* + \beta^* - \alpha^*\beta^*}{n(1+M-\alpha^*+M\alpha^*)} = \frac{-(\alpha^* - 1)(\beta^* - 1) + 1}{n(1+M+(M-1)\alpha^*)} > 0 \\
\text{and} \quad \lambda_{*,i} &= \frac{M\beta^* + \alpha^*(-1 + M\beta^*)}{Mn(1+M-\alpha^*+M\alpha^*)} = \frac{(M\beta^* - \alpha^*) + M\alpha^*\beta^*}{Mn(1+M+(M-1)\alpha^*)} > 0.
\end{align*}
Meanwhile, $\lambda_{0,*}$ is non-negative by~\eqref{eq.lambda_0*}. For $\lambda_{*,0}$ in~\eqref{eq.lambda_*0}, after substituting the expressions of $\lambda_{1^{(1)},*}$ and $\lambda_{*,1^{(1)}}$ from~\eqref{eq.lambda_i*}--\eqref{eq.lambda_*i}, we have 
\begin{equation}\label{eq.lambda_*0_pos}
\begin{aligned}
\lambda_{*,0} = n(\lambda_{1^{(1)},*} - \lambda_{*,1^{(1)}}) &= n\left(\frac{\alpha^* + \beta^* - \alpha^*\beta^*}{n(1+M-\alpha^*+M\alpha^*)} - \frac{M\beta^* + \alpha^*(-1 + M\beta^*)}{Mn(1+M-\alpha^*+M\alpha^*)}\right) \\
&= \frac{\alpha^*(1+M-2M\beta^*)}{M(1+M + (M-1)\alpha^*)} > 0,
\end{aligned}
\end{equation}
where the last inequality comes from Theorem~\ref{lem:root-interval} that $\alpha^* > 0$ and $\beta^* < \frac{M+1}{2M}$.

Next, let's show that, following the definition in~\eqref{eq.lambda_i0}, $\lambda_{i,0} \geq 0$ for any $i\in\Omega\backslash\{0,*\}$. For any $i\in\Omega\backslash\{0,*\}$, by~\eqref{eq.lambda_i*} and~\eqref{eq.lambda_*i}, the expression of $\lambda_{i,0}$ in~\eqref{eq.lambda_i0} turns to
\begin{align}
\lambda_{i,0} &= \frac{n^2(\lambda_{1^{(1)},*}+M\lambda_{*,1^{(1)}})^2 - n(\lambda_{1^{(1)},*}+\lambda_{*,1^{(1)}})}{2(n - nv + v)} - \frac{\lambda_{1^{(1)},*} - \lambda_{*,1^{(1)}}}{2} \nonumber{} \\
&= \frac{n^2\left(\frac{\alpha^* + \beta^* - \alpha^*\beta^*}{n(1+M-\alpha^*+M\alpha^*)}+M\left(\frac{M\beta^* + \alpha^*(-1 + M\beta^*)}{Mn(1+M-\alpha^*+M\alpha^*)}\right)\right)^2 - n\left(\frac{\alpha^* + \beta^* - \alpha^*\beta^*}{n(1+M-\alpha^*+M\alpha^*)}+\frac{M\beta^* + \alpha^*(-1 + M\beta^*)}{Mn(1+M-\alpha^*+M\alpha^*)}\right)}{2(n - nv + v)} \nonumber{} \\
&\quad\quad\quad - \frac{\frac{\alpha^* + \beta^* - \alpha^*\beta^*}{n(1+M-\alpha^*+M\alpha^*)} - \frac{M\beta^* + \alpha^*(-1 + M\beta^*)}{Mn(1+M-\alpha^*+M\alpha^*)}}{2} \nonumber{} \\
&= \frac{-r(\beta^*)}{2Mn(n - (n-1)v)(1+M + (M-1)\alpha^*)} \geq 0, \label{eq.lambda_i0_pos}
\end{align}
where the last equality uses the definition of function $r:\R\to\R$ in~\eqref{eq.r_func}, and the last inequality follows Lemma~\ref{lem:lambda10} and $(M,n,v)\in\R_{>1}\times\N\times\left[0, \tfrac{(M-1)n}{(n-1)M}\right]$.

By~\eqref{eq.lambda_*0}--\eqref{eq.lambda_i0}, for any $i\in\Omega\backslash\{0,*\}$,
\begin{align}\label{eq.lambda_0i_positive}
\lambda_{0,i} - \lambda_{i,0} = \lambda_{1^{(1)},*} - \lambda_{*,1^{(1)}} = \frac{\lambda_{*,0}}{n} > 0,
\end{align}
where the last inequality follows~\eqref{eq.lambda_*0_pos}. From the fact of $\lambda_{i,0} \geq 0$ for all $i\in\Omega\backslash\{0,*\}$ (see~\eqref{eq.lambda_i0_pos}), we further conclude $\lambda_{0,i} > 0$ for any $i\in\Omega\backslash\{0,*\}$.

Moreover, from~\eqref{eq.lambda_i*}--\eqref{eq.lambda_*0} and~\eqref{eq.tau}, it holds that
\begin{align*}
\tau &= 1 - M\left(\lambda_{0,*} + \lambda_{*,0} + \sum_{i=1}^n\lambda_{1^{(i)},*} + \sum_{i=1}^n\lambda_{*,1^{(i)}}\right) = 1 - M\left( n(\lambda_{1^{(1)},*} - \lambda_{*,1^{(1)}}) + n\lambda_{1^{(1)},*} + n\lambda_{*,1^{(1)}} \right) \\
&= 1 - 2Mn\lambda_{1^{(1)},*} = 1 - \frac{2M(\alpha^* + \beta^* - \alpha^*\beta^*)}{1+M-\alpha^*+M\alpha^*} = \frac{(1-\alpha^*)(1+M-2M\beta^*)}{1+M + (M-1)\alpha^*} > 0,
\end{align*}
where the last inequality comes from $M\in\R_{>1}$, $\alpha^*\in\left[\frac{1}{M},1\right)$, and $\beta^* \in \left(\frac{\alpha^*}{M},\frac{M+1}{2M}\right)$ by Theorem~\ref{lem:root-interval}.

Finally, $\mu_0$ and $\{\mu_i\}_{i\in[n]}$ are all non-negative by their definitions in~\eqref{eq.mu_0}--\eqref{eq.mu_i}. This completes the proof of statement (i).

\noindent \textbf{Statement (ii).} To show that the selected variables following~\eqref{eq.lambda_i*}--\eqref{eq.mu_i} satisfy the positive semi-definiteness constraint and the equality constraint in~\eqref{prob.dual.sdp}, we compute the entries of the left-hand-side matrix in~\eqref{eq.dual.sdpconstraint} 
\begin{align}\label{eq.Delta_def}
\Delta := \tau \bar{A}_R + \sum_{i=0}^n \mu_i \bar{A}_i^{\text{var}} - \sum\limits_{(i, j)\in\Omega\times\Omega \text{ with }i\neq j} \lambda_{i,j}\bar{B}_{i,j} - \bar{C} \in\R_{(1+n+n^2)\times(1+n+n^2)}.
\end{align}
By~\eqref{eq.lambda_i*},~\eqref{eq.lambda_*i},~\eqref{eq.lambda_0i}, and~\eqref{eq.lambda_i0}, for any $i\in\Omega\backslash\{0,*\}$,
\begin{align*}
\lambda_{i,*} = \lambda_{1^{(1)},*}, \quad \lambda_{*,i} = \lambda_{*,1^{(1)}}, \quad \lambda_{0,i} = \lambda_{0,1^{(1)}}, \quad \text{and} \quad \lambda_{i,0} = \lambda_{1^{(1)},0}.
\end{align*}
Therefore, in the following formulation, we use $(\lambda_{1^{(1)},*}, \lambda_{*,1^{(1)}}, \lambda_{0,1^{(1)}}, \lambda_{1^{(1)},0})$ to represent other equivalent terms for brevity. 

It's worth mentioning that all symbolic matrices in~\eqref{eq:Abar}--\eqref{eq:AR} are symmetric by their definitions, which leads to a symmetric $\Delta$ matrix by~\eqref{eq.Delta_def}. After representing symbolic matrices in~\eqref{eq:Abar}--\eqref{eq:AR} by symbolic vectors defined in~\eqref{eq.sdp.symobvec}--\eqref{eq.sdp.symobvec_1}, we can further quantify the $\Delta\in\R_{(1+n+n^2)\times(1+n+n^2)}$ matrix in~\eqref{eq.Delta_def}. It turns out the $\Delta$ matrix includes four structured pieces described in Figure~\ref{fig:matrix-structure}, whose entries are presented as follows \footnote{The detailed format of $\Delta$ can also be verified using \textit{Mathematica}; the corresponding implementation is available at \url{https://github.com/LuweiYY/LongStepSize_SGD}.}.

\begin{figure}[t]
    \centering
    \includegraphics[width=0.45\textwidth]{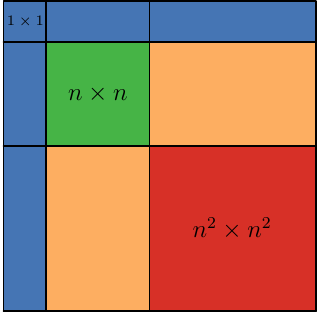}
    \caption{Structured $\Delta\in\R_{(1+n+n^2)\times(1+n+n^2)}$ matrix. }
    \label{fig:matrix-structure}
\end{figure}
\paragraph{First row and first column of $\Delta$ (blue area in Figure~\ref{fig:matrix-structure}).}
The top-left entry of $\Delta$ is given by:
\begin{align*}
    [\Delta]_{(1,1)} &= \tau - 1 + M\left(\lambda_{0,*} + \lambda_{*,0} + n\lambda_{1^{(1)},*} + n\lambda_{*,1^{(1)}}\right).
\end{align*}
The rest of entries in the first row of $\Delta$ consist of two distinct constant values:
\begin{align*}
    [\Delta]_{(1, i)} = \begin{cases} \frac{1}{n}\left(\alpha - \lambda_{0,*} - M\lambda_{*,0} - n M\alpha\left(\lambda_{1^{(1)},*} + \lambda_{*,1^{(1)}}\right)\right) & \text{if } i \in \{2, \dots, n+1\}, \\
    \frac{1}{n^2}\left(\beta - n \lambda_{1^{(1)},*} - n M\lambda_{*,1^{(1)}}\right) &\text{if } i \in \{n+2, \dots, 1+n+n^2\}.
    \end{cases}
\end{align*}

\paragraph{Top-left $n \times n$ block of $\Delta$ (green area in Figure~\ref{fig:matrix-structure}).}
This sub-matrix only includes entries taking two different values, depending on diagonal or off-diagonal: for any $(i,j)\in\{2,\ldots,n+1\}\times\{2,\ldots,n+1\}$,
\begin{align*}
[\Delta]_{(i,j)} = \begin{cases}
\frac{1}{n^2}\Bigl((n-1)\mu_0 + n(\lambda_{0,1^{(1)}} + \lambda_{1^{(1)},0}) - 2n\alpha(\lambda_{0,1^{(1)}} + M\lambda_{1^{(1)},0}) \\
\quad + \lambda_{0,*} + \lambda_{*,0} + n\alpha^2\bigl(-1 + nM(\lambda_{0,1^{(1)}} + \lambda_{1^{(1)},0} + \lambda_{*,1^{(1)}} + \lambda_{1^{(1)},*})\bigr) \Bigr) & \text{if }i=j, \\
\frac{1}{n^2}\left(-\mu_0 + n(\lambda_{0, 1^{(1)}} + \lambda_{1^{(1)}, 0}) -2n\alpha(\lambda_{0,1^{(1)}} + M\lambda_{1^{(1)},0}) + (\lambda_{0,*} + \lambda_{*,0})\right) & \text{if }i\neq j.
\end{cases}
\end{align*}

\paragraph{Bottom-right $n^2 \times n^2$ block of $\Delta$ (red area in Figure~\ref{fig:matrix-structure}).}
This area is a block-diagonal, square matrix. In particular, its diagonal blocks consist of $n$ sub-matrices with dimension of $n$-by-$n$. Each diagonal sub-matrix has entries taking two different values, depending on diagonal elements or not: for any $(i,j)\in\{n+2,\ldots,1+n+n^2\}\times\{n+2,\ldots,1+n+n^2\}$,
\begin{align*}
[\Delta]_{(i,j)} = \begin{cases}
0 & \text{if }\lceil \tfrac{i-1}{n} \rceil \neq \lceil \tfrac{j-1}{n} \rceil, \\
\frac{1}{n^2} \left(-\beta^2 + (n-1)\mu_1 + \lambda_{0,1^{(1)}} + \lambda_{1^{(1)},0} + \lambda_{1^{(1)},*} + \lambda_{*,1^{(1)}}\right) & \text{if }i=j, \\
\frac{1}{n^2} \left(-\mu_1 + \lambda_{0,1^{(1)}} + \lambda_{1^{(1)},0} + \lambda_{1^{(1)},*} + \lambda_{*,1^{(1)}}\right) & \text{otherwise.}
\end{cases}
\end{align*}

\paragraph{The remaining two blocks in $\Delta$ (orange area in Figure~\ref{fig:matrix-structure}).}
These two blocks also include entries that only take one of two possible values: for any $(i,j)\in\{2,\ldots,n+1\}\times\{n+2,\ldots,1+n+n^2\}\cup\{n+2,\ldots,1+n+n^2\}\times\{2,\ldots,n+1\}$,
\begin{align*}
[\Delta]_{(i,j)} = \begin{cases}
\frac{1}{n^2}\left(-\alpha\beta - (\lambda_{0,1^{(1)}} + \lambda_{1^{(1)},0}) + n\alpha(\lambda_{1^{(1)},0} + \lambda_{1^{(1)},*} + M (\lambda_{0,1^{(1)}} + \lambda_{*,1^{(1)}})) \right) &\text{if }i < j \text{ and } i = \lceil \frac{j-1}{n} \rceil, \\
\frac{1}{n^2}(-\lambda_{0,1^{(1)}}- \lambda_{1^{(1)},0}) &\text{if }i < j \text{ and } i \neq \lceil \frac{j-1}{n} \rceil, \\
[\Delta]_{(j,i)} &\text{if }i > j.
\end{cases}
\end{align*}

After identifying the detailed format of the matrix $\Delta\in\R_{(1+n+n^2)\times(1+n+n^2)}$, we turn to focus on the two-step stepsize schedule $(\alpha^*,\beta^*)$ uniquely defined in Theorem~\ref{lem:root-interval} together with the dual multipliers in~\eqref{eq.lambda_i*}--\eqref{eq.mu_i}, and present their properties. Consider following equations. In fact, by the definition of $(\alpha^*,\beta^*)$ in Theorem~\ref{lem:root-interval} and the dual multipliers in~\eqref{eq.lambda_i*}--\eqref{eq.mu_i}, we directly have the following equation system holds true \footnote{This system of equations can be verified via \textit{Mathematica}; see \url{https://github.com/LuweiYY/LongStepSize_SGD}.}:
\begin{align}\label{eq.equation_system}
\begin{cases}
\tau - 1 = -M\left(\lambda_{0,*} + \lambda_{*,0} + n\lambda_{1^{(1)},*} + n\lambda_{*,1^{(1)}}\right), \\
\alpha^* = \lambda_{0,*} + M\lambda_{*,0} + nM\alpha^*\left(\lambda_{1^{(1)},*} + \lambda_{*,1^{(1)}}\right), \\
\beta^* = n\lambda_{1^{(1)},*} + nM\lambda_{*,1^{(1)}},  \\
(\alpha^*)^2 = (n - (n-1)v)(\lambda_{0,1^{(1)}} + \lambda_{1^{(1)},0}) + (\lambda_{0,*} + \lambda_{*,0}) \\
\qquad\qquad + nM(\alpha^*)^2(\lambda_{0,1^{(1)}} + \lambda_{1^{(1)},0} + \lambda_{1^{(1)},*} + \lambda_{*,1^{(1)}}) - 2n\alpha^*\lambda_{0,1^{(1)}} - 2nM\alpha^*\lambda_{1^{(1)},0},  \\
(\beta^*)^2 = n\lambda_{*,1^{(1)}} + n\lambda_{1^{(1)},*} + (n - (n-1)v)( \lambda_{0,1^{(1)}} + \lambda_{1^{(1)},0}), \\
\alpha^*\beta^* = -n(\lambda_{0,1^{(1)}} + \lambda_{1^{(1)},0}) + nM\alpha^*(\lambda_{0,1^{(1)}} + \lambda_{*,1^{(1)}}) + n\alpha^*(\lambda_{1^{(1)},0} + \lambda_{1^{(1)},*}), \\
n\lambda_{0,1^{(1)}}+\lambda_{0,*} = n\lambda_{1^{(1)}, 0}+\lambda_{*,0},  \\
\lambda_{1^{(1)}, 0} + \lambda_{1^{(1)}, *} = \lambda_{0, 1^{(1)}} + \lambda_{*, 1^{(1)}},  \\
\lambda_{*, 0} + n\lambda_{*, 1^{(1)}} = \lambda_{0, *} + n\lambda_{1^{(1)}, *}. 
\end{cases}
\end{align}
 
Substituting the system of equations~\eqref{eq.equation_system} into the explicit entries of $\Delta$ under the condition of $(\alpha,\beta) = (\alpha^*,\beta^*)$ leads to simplifications. In fact, $\Delta$ directly collapses into the form of $\begin{bmatrix} 0 & \mathbf{0} \\ \mathbf{0} & \Lambda \end{bmatrix}$, where $\Lambda$ is defined in~\eqref{eq.Lambda_def} with
\begin{equation}\label{eq.z_delta_values}
\begin{aligned}
    z &= \frac{1}{n^2}\left(\lambda_{0,1^{(1)}} + \lambda_{1^{(1)},0}\right),\\
    \delta_1 &= \frac{1}{n^2}\Bigl( \mu_0 - n(\lambda_{0,1^{(1)}} + \lambda_{1^{(1)},0}) + 2n\alpha^*(\lambda_{0,1^{(1)}} + M\lambda_{1^{(1)},0}) - \lambda_{0,*} - \lambda_{*,0} \Bigr) , \\
    \text{and} \quad \delta_2 &= \frac{1}{n^2}\Bigl(\mu_1 - \lambda_{0,1^{(1)}} - \lambda_{1^{(1)},0} - \lambda_{1^{(1)},*} - \lambda_{*,1^{(1)}} \Bigr).
\end{aligned}
\end{equation}
According to the values of dual multipliers in~\eqref{eq.lambda_i*}--\eqref{eq.mu_i}, when $n \in \N_{>1}$, $(z,\delta_1,\delta_2)$ in~\eqref{eq.z_delta_values} satisfy
\begin{align*}
    z = \frac{1}{n^2}\left(\lambda_{0,1^{(1)}} + \lambda_{1^{(1)},0}\right) > 0, \qquad
    \delta_1 &\geq \frac{n^2z - (n-1)^2v^2z}{(n-1)v}, \qquad
    \text{and} \quad \delta_2 \geq -\frac{(n-1)}{n} vz,
\end{align*}
where the first inequality follows~\eqref{eq.lambda_0i_positive} and the result of non-negative dual multipliers already shown in statement (i), and the last two inequalities are directly from~\eqref{eq.z_delta_values} together with the definitions of $\mu_0$ and $\mu_1$ in~\eqref{eq.mu_0} and~\eqref{eq.mu_i}, respectively. Hence, by Lemma~\ref{lem:semidef}, the left-hand-side matrix in~\eqref{eq.dual.sdpconstraint} is positive semi-definite under the theorem statement. Lastly, the equality constraint in~\eqref{prob.dual.sdp} is naturally satisfied following the definition of dual multipliers in~\eqref{eq.lambda_i*}--\eqref{eq.lambda_ij} and the last three equalities in~\eqref{eq.equation_system}. This completes the proof of statement (ii).

\noindent \textbf{Statement (iii).} From statements (i) and (ii), the constructed dual variables in~\eqref{eq.lambda_i*}--\eqref{eq.mu_i} are feasible to the dual problem~\eqref{prob.dual.sdp}. 
As noted in Remark~\ref{rem:prime-dual-equal}, the objective value of any feasible solution to dual problem~\eqref{prob.dual.sdp} performs as a valid upper bound for the optimal objective value of primal problem~\eqref{pep:primal_prob}, which is equivalent to problem~\eqref{pep:one-step}. Therefore, \eqref{eq.upperbound} holds true, which conclude the statement.
\end{proof}

\subsection{Proof of Theorem~\ref{theo:lowerbound}}\label{sec.app_proof_3}
\begin{proof}
The proof consists of two steps. We begin with constructing a feasible solution to the primal PEP problem~\eqref{pep:one-step} under the constant stepsize schedule $\alpha=\beta=\tfrac{2}{M+1}$, resulting in a lower bound for $h_{\mathrm{constant}}(R,\sigma)$.
Next, we demonstrate the existence of parameters $\mathscr{U}(M, n) > 0$ and $\bar v \in \left[0, \tfrac{(M-1)n}{(n-1)M}\right]$ such that $\tfrac{\sigma}{R} \leq \sqrt{\mathscr{U}(M, n)}$ implies $\frac{h_{\mathrm{silver}}(\bar v,R,\sigma)}{h_{\mathrm{constant}}(R,\sigma)} \leq \mathscr{C} < 1$, which concludes the statement.

Let's start with considering the $d$-dimensional quadratic function
\begin{align*}
    f(x) \;=\; \frac{M}{2} ([x]_1)^2 + \frac{1}{2}\sum_{i=2}^d ([x]_i)^2,
\end{align*}
which belongs to the function class $\mathcal{F}_{1,M}$ and admits a unique minimizer at $x_* = \mathbf{0}$.
Let $e_1$ denote the first column of identity matrix $I_d$.
We set the initial iterate to $x_0 = R e_1$ so that $\|x_0\| = R$ and the initialization condition~\eqref{eq:initial_condition} is satisfied. Let's define $\boldsymbol{\sigma}_d := \sigma e_1\in\mathbb{R}^d$ so that $\|\boldsymbol{\sigma}_d\| = \sigma$.
We consider the stochastic gradient sampling policy as follows:
\begin{itemize}
\item When $n$ is an even number, for any $x\in\mathbb{R}^d$, there are $\frac{n}{2}$ samples taking the gradient estimates of $\nabla f(x) + \boldsymbol{\sigma}_d$ and the other
$\frac{n}{2}$ samples taking the gradient estimates of $\nabla f(x) - \boldsymbol{\sigma}_d$.
\item When $n$ is an odd number, for any $x\in\mathbb{R}^d$, there is only one sample achieving exact gradient information $\nabla f(x)$, while there are $\frac{n-1}{2}$ samples taking the gradient estimates of $\nabla f(x) + \sqrt{\frac{n}{n-1}}\,\boldsymbol{\sigma}_d$ and the rest of $\frac{n-1}{2}$ samples taking estimates of $\nabla f(x) - \sqrt{\frac{n}{n-1}}\,\boldsymbol{\sigma}_d$.
\end{itemize}
This construction ensures that the stochastic gradient estimates are unbiased estimators with bounded variance, satisfying conditions~\eqref{eq:expected_val_condition} and~\eqref{eq:variance_condition}.

Under the constant stepsize choice $\alpha = \beta = \tfrac{2}{M + 1}$, 
the iterates remain in the linear span of $\{e_1\}$ and the one-dimensional calculation applies. In particular, after two iterations, the expected squared distance to the optimal solution can be computed explicitly as
\begin{align}\label{eq.feasbile_perf}
    \mathbb{E}_{\xi_0, \xi_1}\!\left[\|x_2\|^2\right] 
    = \left(\frac{M - 1}{M + 1}\right)^{\!4} R^2 
      + \frac{8(1 + M^2)}{(1 + M)^4}\,\sigma^2.
\end{align}
Therefore, by the feasibility of our constructed problem to~\eqref{pep:one-step} and the definition of $h_{\rm constant}(R, \sigma)$ from the theorem statement, we immediately from~\eqref{eq.feasbile_perf} have that
\begin{align}\label{eq.h1hconstant}
    h_1(\tfrac{\sigma}{R}) := \bar{\tau} + \bar{\mu}\,\left(\frac{\sigma}{R}\right)^2 \le \frac{h_{\rm constant}(R, \sigma)}{R^2},
\end{align}
where 
\begin{align}\label{eq.bar_tau_mu}
    \bar{\tau} := \left(\frac{M - 1}{M + 1}\right)^{\!4} \quad \text{and} \quad \bar{\mu} := \frac{8(1 + M^2)}{(1 + M)^4}.    
\end{align}
Using $h(v,\tfrac{\sigma}{R})$ defined in~\eqref{eq.objfunc}, which, according to~\eqref{eq.upperbound}, behaves an upper bound of the optimal objective value of~\eqref{pep:one-step}. Thus, for any $v\in\left[0, \tfrac{(M-1)n}{(n-1)M}\right]$ we always have 
\begin{align}\label{eq.h2hours}
h(v, \tfrac{\sigma}{R}) \geq \frac{h_{\rm silver}(v, R, \sigma)}{R^2}.
\end{align}
As $(\alpha^{*}(v),\beta^{*}(v))$ is the unique solution satisfying~\eqref{eq.stepsize.range}--\eqref{eq.stepsize.beta} (see Theorem~\ref{lem:root-interval}) being continuous in $v$, $\tau(v)$ defined in~\eqref{eq.tau} and each $\{\mu_i(v)\}_{i\in\{0\}\cup[n]}$ defined in~\eqref{eq.mu_0}--\eqref{eq.mu_i} are continuous in $v$ as well. Consequently, $\mu(v)=\sum_{i=0}^{n}\mu_i(v)$ defined in~\eqref{eq.objfunc} is continuous as well.
Using closed-form expressions of $\tau(0)$ and $\bar\tau$ in~\eqref{eq.tauSilver} and~\eqref{eq.bar_tau_mu}, we have
\begin{align*}
    \tau(0) = \left(\frac{\sqrt{2M^2 - 2M + 1} - M}{2 - M + \sqrt{2M^2 - 2M + 1}}\right)^2 < \left(\frac{M - 1}{M + 1}\right)^{\!4} = \bar\tau.
\end{align*}
Together with the definition of $\mathscr{C}$ in~\eqref{eq.ratio}, we further know that
\begin{align*}
\tau(0) - \mathscr{C}\,\bar\tau = \tau(0) - \Bigl(\tfrac{1}{2} + \tfrac{\tau(0)}{2\bar\tau}\Bigr)\bar\tau = \tfrac{1}{2}\bigl(\tau(0) - \bar\tau\bigr) < 0.
\end{align*}
By the continuity of $\tau(v)$, there exists $\delta_1 \in \left(0, \tfrac{(M-1)n}{(n-1)M}\right]$ such that
\begin{align}\label{eq.numer.leq0}
    \tau(v) - \mathscr{C}\bar{\tau} < 0 \quad \text{for any } v\in [0,\delta_1].
\end{align}
Since $\mu(0) = +\infty$ from Remark~\ref{rem:dual_feasibility}, it holds that $\mathscr{C}\bar{\mu} - \mu(0) < 0$. Therefore, by the continuity of $\mu(v)$, there exists $\delta_2 \in \left(0, \tfrac{(M-1)n}{(n-1)M}\right]$ such that 
\begin{align}\label{eq.denom.leq0}
    \mathscr{C}\bar{\mu} - \mu(v) < 0 \quad \text{for any } v\in [0,\delta_2].
\end{align}
By~\eqref{eq.numer.leq0} and~\eqref{eq.denom.leq0}, it yields that
\begin{align}\label{eq.upbound.sigma}
    \frac{\tau(\tilde v)-\mathscr{C}\bar\tau}
         {\mathscr{C}\bar\mu-\mu(\tilde v)}>0 \quad \text{for any } \tilde v\in\left(0,\min\{\delta_1,\delta_2\}\right).
\end{align}
Let's define
\begin{align*}
\mathscr{U}(M,n)
:= \max_{v\in(0,v_{\max})}
\frac{\tau(v)-\mathscr{C}\bar{\tau}}{\mathscr{C}\bar{\mu}-\mu(v)}, \ \ \text{where}\ v_{\max} := \sup\left\{v\in \left[0, \tfrac{(M-1)n}{(n-1)M}\right]:\mathscr{C}\bar\mu-\mu(t)< 0 \ \text{for all} \ t \in [0,v] \,\right\}.
\end{align*}
By~\eqref{eq.denom.leq0}, we have $v_{\max} > 0$. Moreover, \eqref{eq.upbound.sigma} guarantees $\mathscr{U}(M,n)>0$.
Since $\mathscr{C}\bar{\mu}-\mu(v) < 0$ for any $v\in(0,v_{\max})$ and $(\tau(v),\mu(v))$ are both continuous on the interval $(0,v_{\max})$, we have that $\frac{\tau(v)-\mathscr{C}\bar{\tau}}{\mathscr{C}\bar{\mu}-\mu(v)}$ is well-defined and continuous in $v$ over the interval $(0,v_{\max})$. Under the condition of $\tfrac{\sigma}{R} < \sqrt{\mathscr{U}(M,n)}$, the continuity of $ \frac{\tau(v)-\mathscr{C}\bar{\tau}}{\mathscr{C}\bar{\mu}-\mu(v)}$ in $v$ ensures the existence of $\bar v\in(0,v_{\max})$ such that $\tfrac{\sigma^2}{R^2} \le \frac{\tau(\bar v)-\mathscr{C}\bar\tau}{\mathscr{C}\bar\mu-\mu(\bar v)}$, which, together with~\eqref{eq.objfunc} and~\eqref{eq.h1hconstant}, further implies that
\begin{align}\label{eq.h2h1}
    h(\bar v,\tfrac{\sigma}{R})
    =\tau(\bar v)+\mu(\bar v)\frac{\sigma^2}{R^2} \le \mathscr{C}\bar\tau+\mathscr{C}\bar\mu\frac{\sigma^2}{R^2} = \mathscr{C}h_1\!\left(\tfrac{\sigma}{R}\right).
\end{align}
Combining~\eqref{eq.h1hconstant}, \eqref{eq.h2hours}, and~\eqref{eq.h2h1}, we directly have that
\begin{align*}
h_{\mathrm{silver}}(\bar v,R,\sigma) \leq R^2 h(\bar v,\tfrac{\sigma}{R}) \leq R^2 \mathscr{C}h_1(\tfrac{\sigma}{R}) \leq \mathscr{C} h_{\rm constant}(R, \sigma).
\end{align*}

Finally, we show that $\mathscr{C} < 1$, which is equivalent to proving that, for all $M > 1$,
\begin{align*}
    \frac{(M + 1)^4\!\left(\sqrt{2M^2 - 2M + 1} - M\right)^2}
         {(M - 1)^4\!\left(2 + \sqrt{2M^2 - 2M + 1} - M\right)^2}
    < 1.
\end{align*}
Since the denominator in the inequality above is positive for any $M > 1$, it is equivalent to show that
\begin{align}\label{eq.Cleq1}
    (M + 1)^4\!\left(\sqrt{2M^2 - 2M + 1} - M\right)^2
    - (M - 1)^4\!\left(2 + \sqrt{2M^2 - 2M + 1} - M\right)^2 < 0.
\end{align}
Simplifying the left hand side of~\eqref{eq.Cleq1} yields
\begin{align*}
    -4\sqrt{1 - 2M + 2M^2}\,(1 - 4M + 10M^2 - 4M^3 + 5M^4)
    + 4(-1 + 7M - 14M^2 + 18M^3 - 9M^4 + 7M^5) < 0.
\end{align*}
For any $M > 1$, both polynomial terms are positive such as
\begin{align*}
    1 - 4M + 10M^2 - 4M^3 + 5M^4
    &= 5M^3(M-1) + M^3 + 10M(M-1) + 6M + 1 > 0,\\[3pt]
    -1 + 7M - 14M^2 + 18M^3 - 9M^4 + 7M^5
    &= (M^3(7M - 2) + 16M^2 + 2M + 9)(M - 1) + 8 > 0.
\end{align*}
Consequently, the inequality~\eqref{eq.Cleq1} is true if and only if
\begin{align*}
    (-1 + 7M - 14M^2 + 18M^3 - 9M^4 + 7M^5)^2
    - (1 - 2M + 2M^2)(1 - 4M + 10M^2 - 4M^3 + 5M^4)^2
    < 0,
\end{align*}
resulting in $-M(M - 1)^6\left(M^3 + 2M^2 + M + 4\right) \le 0$, which holds for all $M > 1$. Hence, $\mathscr{C} < 1$.
\end{proof}
\end{document}